\pgfplotsset{compat=1.17} 
\newcommand{\R}{\mathbb{R}}
\newcommand{\indic}{1\!\!1}
\DeclareMathOperator{\sign}{sign}
\DeclareMathOperator{\arctanh}{arctanh}
\DeclareMathOperator{\Ran}{Ran}
\newtheorem{theo}{Theorem} 
\newtheorem{lemma}{Lemma}[section]
\newtheorem{prop}[lemma]{Proposition}
\newtheorem{corol}[lemma]{Corollary}
\newtheorem{conj}{Conjecture}
\theoremstyle{remark}
\newtheorem{remark}[lemma]{Remark}
\begin{document}
\author[T.~Duyckaerts]{Thomas Duyckaerts}
\address{LAGA (UMR 7539), Universit\'e Sorbonne Paris Nord, Institut Galil\'ee, 99 avenue Jean-Baptiste Cl\'ement, 93430 Villetaneuse, France.}
\email{duyckaer@math.univ-paris13.fr}
\author[G.~Negro]{Giuseppe Negro}
\address{  
Instituto Superior Técnico\\
Avenida Rovisco Pais\\ 
1049-001 Lisboa, Portugal.} 
\email{giuseppe.negro@tecnico.ulisboa.pt}

 \title[Asymptotically self-similar global solutions]{Global solutions with asymptotic self-similar behaviour for the cubic wave equation}

 \begin{abstract}
    We construct a two-parameter family of explicit solutions to the cubic wave equation on $\mathbb{R}^{1+3}$. Depending on the value of the parameters, these solutions either scatter to linear, blow-up in finite time, or exhibit a new type of threshold behaviour which we characterize precisely.
 \end{abstract}
 \maketitle
 
 \section{Introduction}
 \subsection{Background}
 Consider the wave equation in space dimension $3$ with a power-like, focusing nonlinearity
 \begin{equation}
 \label{gNLW}
 \partial_t^2u-\Delta u=|u|^{p-1}u, \quad x\in \R^3, \quad t\in I,
 \end{equation} 
 where $I$ is an interval such that $0\in I$, with initial data
 \begin{equation}
     \label{gNLWID}
     \boldsymbol{u}_{\restriction t=0}=(u_0,u_1)\in \mathcal{H}^s,
 \end{equation}
where
 \begin{equation}\label{eq:notation}
        \mathcal{H}^{s}=\dot{H}^{s}(\R^3)\times \dot{H}^{s-1}(\R^3),\quad  \boldsymbol{u}(t)=(u(t, \cdot), \partial_t u(t, \cdot)),
\end{equation}
and $\dot{H}^s$ denotes the usual homogeneous $L^2$ Sobolev space. The equation~\eqref{gNLW} is invariant by scaling: if $\lambda>0$ and $u$ is a solution on a time interval $I$, so is $u_{\lambda}$, defined by
$$u_{\lambda}(t,x)=\lambda^{\frac{2}{p-1}}u(\lambda t,\lambda x),$$
on the interval $\frac{1}{\lambda}I$. For $p\ge 3$, the initial value problem \eqref{gNLW},  \eqref{gNLWID} is locally well-posed in $\mathcal{H}^s$ for any $s\geq s_c$, where $s_c=\frac 32-\frac{2}{p-1}$ is the critical Sobolev exponent, i.e. the exponent such that $\left\|\boldsymbol{u}_{\lambda}(0)\right\|_{\mathcal{H}^{s_c}}=\|\boldsymbol{u}(0)\|_{\mathcal{H}^{s_c}}$ (with some exceptions when $p$ is not an odd integer and $s$ is large; see for example~\cite[§3.3]{Tao06BO}). If the initial data is in $\mathcal{H}^1\cap \mathcal{H}^{s_c}$, then the \textit{energy}
\begin{equation}
    \label{defE}
    E(\boldsymbol{u}):=\frac{1}{2}\int_{\mathbb R^3}|\nabla u(x)|^2\, dx+\frac{1}{2}\int_{\mathbb R^3} (\partial_tu(x))^2\, dx -\frac{1}{p+1}\int_{\mathbb R^3}|u(x)|^{p+1}\, dx
\end{equation}
 is well-defined and conserved with time. 
 
 We are interested in the global dynamics of solutions of \eqref{gNLW}, and especially solutions with initial data in the critical space $\mathcal{H}^{s_c}$. A first example is given by \textit{scattering} solutions, that is solutions $u$ that are global for positive times and such that there exists a solution $u_L$ of the free wave equation
 \begin{equation}
     \label{FW}
     \partial_t^2u_L-\Delta u_L=0
 \end{equation}
 satisfying $\lim_{t\to\infty}\left\|\boldsymbol{u}(t)-\boldsymbol{u}_L(t)\right\|_{\mathcal{H}^{s_c}}=0$. It is well-known that if $p\geq 3$, the set of initial data $(u_0,u_1)$ in $\mathcal{H}^{s_c}$ that lead to scattering solutions is open in $\mathcal{H}^{s_c}$; in other words, scattering is a stable behaviour. Since this set of initial data obviously contains the null data $\boldsymbol{0}$, solutions with small data in $\mathcal{H}^{s_c}$ scatter.
 
 Other known solutions of \eqref{gNLW} are \textit{self-similar} solutions, i.e. solutions of the form 
 $u(t,x)=(T\pm t)^{\frac{2}{p-1}} \varphi\left((T\pm t) x\right)$ for some profile $\varphi$ which solves a certain explicit partial differential equation. The first example of a self-similar solution,  corresponding to a constant profile $\varphi$, is $u(t, x)=y(t)$ where $y$ solves the ordinary differential equation $y''=y^p$; so, up to a time translation, $y(t)=c_p t^{-\frac{2}{p-1}}$, with $c_p=\left(\frac{2(p+1)}{(p-1)^2}\right)^\frac{1}{p-1}$, $t>0$. Other self-similar profiles have been constructed in \cite{BizonBreitenlohnerMaisonWasserman} for $p=3$ and in \cite{BiMaWa07} for $p\geq 5$. These profiles do not yield solutions with finite energy, or with initial data in the critical space $\mathcal{H}^{s_c}$, although the existence of such solutions is not theoretically excluded in general  (see \cite{KavianWeissler90} for the proof that there is no self-similar, finite energy solutions in the energy-supercritical case, and discussions on the subject). Nevertheless, the self-similar profiles and especially the ODE solution are expected to play an important role in the dynamics of singular solutions of \eqref{gNLW}.
 
 Indeed, it was shown by F. Merle and H. Zaag that any blow-up solution of~\eqref{gNLW} in space dimension $1$ with $p>1$ converges in the past wave cone arising from the blow-up point to the constant $\left(\frac{2(p+1)}{(p-1)^2}\right)^\frac{1}{p-1}$, up to a self-similar rescaling and possibly a Lorentz transformation. In other words, any blow-up solution is close, up to symmetries, to the ODE solution in this wave cone (see \cite[Theorem 2 and Corollary 4]{MeZa07}). The same property is expected to hold in other dimension and in particular in space dimension $3$. This has been proven for~\eqref{gNLW} in the subconformal case ($p<3$ in dimension $3$), with additional assumptions (see \cite{MerleZaag16}), and observed numerically for  $p\in \{3,5,7\}$ in \cite{BizonChmajTabor04}. This ODE blow-up is stable by small perturbations (see again \cite{BizonChmajTabor04} for numerics and \cite{DonningerSchorkhuber12,MeZa15,DonningerSchorkhuber16,Donninger17}). 
 
 In this work, we are mainly interested in global solutions with initial data in $\mathcal{H}^{s_c}$ that do not scatter to a linear solution. Numerical and theoretical works (see \cite{BizonChmajTabor04}, \cite{BizZen09}, \cite{KriegerNahas15}) suggest that these solutions are unstable, at a threshold between the scattering and ODE blow-up behaviours described above.

 These solutions are quite well-understood for the energy-critical power $p=5$, corresponding to $s_c=1$. In this case, there exist \textit{stationary} solutions of \eqref{gNLW}, that is solutions $u(t, x)=Q(x)$ where 
 \begin{equation}
 \label{Ell}
 -\Delta Q=|Q|^{p-1}Q, \quad Q\in \dot{H}^{s_c}(\mathbb R^3).
 \end{equation}
 The only radial solution of \eqref{Ell} for $p=5$, up to scaling and sign change, is the \textit{ground state} $W=\left(1+\frac{|x|^2}{3}\right)^{-1/2}$. Taking  Lorentz transforms of the stationary solutions one obtains \textit{solitary waves}, travelling at a fixed velocity.
 
 These stationary solutions and solitary waves play a crucial role in the dynamics of global solutions. Indeed it was proved in \cite{DuKeMe13} that, in the radial case, all solutions which are global in the future are bounded in the \emph{energy space} $\mathcal{H}^1$, and can be written asymptotically, as $t\to\infty$, as a finite sums of decoupled rescaled ground states, plus a radiation term (solution of the free wave equation \eqref{FW}) and a term which goes to $0$ in $\mathcal{H}^1$. (Weaker version of this result exist without symmetry assumption on the solutions \cite{DuJiKeMe17a}). This type of behaviour is called ``soliton resolution'', by analogy with the soliton resolution known for completely integrable partial differential equations such as Korteweg-de Vries (see \cite{EcSc83}). Let us mention that in this case $p=5$, there also exist solutions blowing up in finite time and remaining bounded in the energy space. These so-called \emph{type II} blow-up solutions have an analogous asymptotic behaviour as the global non-scattering solutions. 
 
 The situation is quite different when $p \neq 5$. Indeed, it holds that~\eqref{Ell} does not have any nonzero solution $Q\in L^{p+1}\cap \dot{H}^1$ in this case (this is a consequence of the Pohozaev identity, see e.g. \cite[Proposition~1]{BeLi83a}). Furthermore, it is proved in the radial case (see \cite{DuKeMe14} for $p>5$, \cite{Shen14} for $3<p<5$ and \cite{DodsonLawrie15} for $p=3$) that for $p\geq 3$, 
 $p\neq 5$, any solution to~\eqref{gNLW} such that 
\begin{equation}
\label{limsup}
 \limsup_{t\to\infty}\|\boldsymbol{u}(t)\|_{\mathcal{H}^{s_c}}<\infty
\end{equation} 
 is a scattering solution. When $p>5$, one can weaken \eqref{limsup}, see \cite{DuyckaertsRoy17,DuyckaertsYang18}. These two facts exclude the possibility of a soliton resolution when $p\neq 5$. Actually, very few is known in this case, and even the existence of global, non-scattering solutions with initial data in $\dot{H}^{s_c}$ is an open question. 
 
 In this paper, we will construct such a solution in the physically relevant case $p=3$, that is the equation
  \begin{equation}
 \label{NLW}
 \partial_t^2u-\Delta u=u^3, \quad x\in \R^3,
 \end{equation}
 with initial data 
 \begin{equation}
     \label{ID}
     \boldsymbol{u}_{\restriction t=0}=(u_0,u_1)\in \mathcal{H}^{1/2}\cap \mathcal{H}^1.
 \end{equation}
 The critical exponent is $s_c=1/2$, and the equation is invariant by conformal transformations.  In particular, if $u$ is a solution of \eqref{NLW}, so is
 \begin{equation}\label{eq:conformal_inversion}
     v(t, x)=\frac{1}{t^2-|x|^2} u\left(\frac{t}{t^2-|x|^2},\frac{x}{t^2-|x|^2}\right),
 \end{equation}
 (at least formally).  Previous works (see \cite{BizZen09,DonningerZenginoglu}) suggest that the solution $\sqrt{2}/t$ plays a role in the asymptotic dynamics of threshold solutions; note that this solution is invariant under~\eqref{eq:conformal_inversion}.
 
Here we will construct two one-parameter families of smooth radial solutions of \eqref{NLW}, \eqref{ID}, which are global and non-scattering for positive times. As $t\to\infty$:
 \begin{itemize}
     \item the critical Sobolev norm $\mathcal{H}^{1/2}$ of these solutions blows up as a power of $\log t$ as $t\to\infty$. The norms $\mathcal{H}^s$, $s>1/2$ remain bounded.
\item the solutions are asymptotically close to one of the self-similar solutions $\pm \frac{\sqrt{2}}{t}$ in the interior of the wave cone $\{|x|<t\}$, 
\item they behave as a linear solution in the exterior of this wave cone.
 \end{itemize}
 Among these solutions, exactly $2$ of them are odd in time, and $2$ of them are even in time, giving all the possible combinations of behaviours asymptotic to $\pm \sqrt{2}/t$ inside the wave cone both in the future and in the past. The other solutions either scatter or blow-up in finite time in the past.
 
 These solutions are to our knowledge the first theoretical examples of global solutions of \eqref{NLW}, \eqref{ID} (and also of the general non-linear wave equation \eqref{gNLW}) with initial data in the critical Sobolev space which are not bounded in this Sobolev space. This behaviour is sometimes referred to as ``blow-up at infinity" or ``grow-up at infinity". In accordance to the numerical work \cite{BizZen09}, we believe that these solutions have a generic asymptotic behaviour in the class of threshold solutions (that is, solutions that are at the boundary of ODE blow-up and scattering). The main goal of this article is to give a complete description of this asymptotic behaviour, which we see as an important step to understand the dynamics of nonscattering global solutions and of threshold solutions of \eqref{NLW}.
 
 \subsection{A two-parameter family of solutions}
 
We will construct a two-parameter family of solutions to~\eqref{NLW}. The aforementioned global non-scattering solutions will be at the boundary between blow-up and scattering solutions. More precisely,  we consider the family $u_{X, Y}=u_{X, Y}(t, x)$ of the solutions to the cubic wave equation~\eqref{NLW} with initial data
 \begin{equation}\label{eq:uAB_initial}
    \begin{array}{cc}
        \displaystyle
        \boldsymbol{u}_{X, Y}(0,x)=\left(\frac{2X}{1+\lvert x \rvert^2}, 
        \frac{4Y}{(1+\lvert x \rvert^2)^2}\right),
        & \text{where }(X, Y)\in \mathbb R^2.
    \end{array}
\end{equation}
As we will see, for each $(X, Y)\in\mathbb R^2$, $u_{X, Y}$ is smooth and it is defined on an open set in $\mathbb R^{1+3}$ that contains the initial time slice $\{0\}\times \mathbb R^3$. In particular, there is a maximal time interval of existence which we denote by 
\begin{equation}\label{eq:maximal_time_interval}
    (t_-(X, Y), t_+(X, Y))\subset[-\infty, \infty].
\end{equation}
For all $t$ in this interval and all $s>-1/2$, it holds that $\boldsymbol{u}_{X, Y}(t)\in \mathcal{H}^s$. Our main theorem describes all possible long-time behaviours of these solutions, in terms of a threshold function $\beta=\beta(X)$, obtained by solving an appropriate integral equation which we will describe precisely in Section~\ref{sec:ODE}. 
\begin{theo}\label{thm:main}
 There is a strictly decreasing smooth function $\beta\colon\mathbb R\to \mathbb R$ satisfying 
 \begin{equation}\label{eq:beta_props}
    \begin{array}{cc}
        \displaystyle \lim_{X\to \pm \infty} \beta(X)=\mp \infty, & \beta(X)>-\beta(-X), 
    \end{array}
\end{equation}
and such that for each $(X, Y)\in\mathbb R^2$, one and only one of the following occurs;
 \begin{itemize}
  \item[(Blow-up)] If $Y>\beta (X)$ or $Y<-\beta(-X)$, then $t_+(X, Y)<\infty$ and 
  \begin{equation}
    \begin{array}{ccc}
      \lVert \boldsymbol{u}_{X, Y}(t)\rVert_{\mathcal H^s}~\to~\infty& \text{as } t\to t_+(X, Y),& \text{ for }s\ge \frac12.
    \end{array}
  \end{equation}    
 
  \item[(Scattering)] If $-\beta(-X)<Y<\beta(X)$, then  $t_+(X, Y)=+\infty$ and there is a smooth solution $v_{X,Y}^+$ to the linear wave equation $\partial^2_t v_{X,Y}^+ = \Delta v_{X, Y}^+$ such that 
  \begin{equation}\label{eq:scattering}
    \begin{array}{cc}\displaystyle
        \lVert \boldsymbol{u}_{X, Y}(t)- \boldsymbol{v}_{X, Y}^+(t)\rVert_{\mathcal H^{1/2}}\to 0, & \text{as }t\to\infty.
    \end{array}
\end{equation}
  \item[(Threshold)] If $Y=\beta(X)$ or $Y=-\beta(-X)$, then  $t_+(X, Y)=+\infty$ and $\lVert \boldsymbol{u}_{X, Y}(t)\rVert_{\mathcal H^{1/2}}\,\to\,\infty$ as $t\to \infty$. 
 \end{itemize}
\end{theo}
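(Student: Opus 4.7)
My plan is to exploit the conformal invariance of the cubic wave equation in dimension $3$ to reduce \eqref{NLW} with the data \eqref{eq:uAB_initial} to a scalar ordinary differential equation. Under the standard Penrose compactification, $\R^{1+3}$ maps conformally to a diamond in the Einstein cylinder $\R\times S^3$ with coordinates $(\tau,\rho,\omega)$, related to $(t,r)$ by $\tan\tfrac{\tau\pm\rho}{2}=t\pm r$; writing $u=\Omega\tilde u$ with conformal factor $\Omega=\tfrac12(\cos\tau+\cos\rho)$, the transformed function $\tilde u$ solves
\[
\partial_\tau^2\tilde u-\Delta_{S^3}\tilde u+\tilde u=\tilde u^3.
\]
The initial data in \eqref{eq:uAB_initial} are built precisely so that at $\tau=0$ one has $\Omega=\tfrac{1}{1+r^2}$; a short computation then yields $\tilde u(0,\cdot)=2X$ and $\partial_\tau\tilde u(0,\cdot)=2Y$, both constant on $S^3$. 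By uniqueness for the Klein–Gordon equation on the cylinder and rotational invariance, $\tilde u(\tau,\rho,\omega)=U(\tau)$ depends only on $\tau$, and the equation collapses to the Duffing-type ODE
\[
\ddot U+U-U^3=0,\qquad U(0)=2X,\quad \dot U(0)=2Y.
\]

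Next I would perform a complete phase-plane analysis. The conserved energy $E=\tfrac12\dot U^2+\tfrac12 U^2-\tfrac14 U^4$ has critical value $\tfrac14$, which separates bounded periodic orbits around the center $U=0$ from orbits escaping to $\pm\infty$ in finite $\tau$-time through the saddles $U=\pm 1$. For an escaping orbit, the blow-up asymptotic at its maximal time $\tau^\ast$ is $U(\tau)\sim\pm\sqrt 2/(\tau^\ast-\tau)$, obtained by balancing $\ddot U$ against $U^3$. The physical half-line $t\ge 0$ maps onto $\tau\in[0,\pi)$ with $\tau=\pi$ being future timelike infinity, so the three regimes of Theorem~\ref{thm:main} correspond exactly to the three cases for $\tau^\ast(X,Y)$: if $\tau^\ast<\pi$ the blow-up surface $\{\tau=\tau^\ast\}$ meets the physical diamond and the earliest intersection on the $t$-axis gives $t_+=\tan(\tau^\ast/2)<\infty$; if $\tau^\ast>\pi$ then $U$ is smooth across $\tau=\pi$ and $u=\Omega U$ extends smoothly through null and timelike infinity, yielding both global existence and the asymptotic free profile; the critical case is $\tau^\ast=\pi$ exactly.

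I would then define $\beta(X)$ as the unique $Y$ for which the Duffing solution with data $(2X,2Y)$ escapes to $+\infty$ precisely at $\tau=\pi$; the discrete symmetries $U\to -U$ and $\tau\to-\tau$ of the ODE yield the second branch $Y=-\beta(-X)$. Strict monotonicity of $\tau^\ast$ along the escape family (increasing $\dot U(0)$ accelerates escape to $+\infty$) together with smoothness of the ODE flow, applied via the implicit function theorem at the escape asymptotic, gives that $\beta$ is smooth and strictly decreasing. The limits $\beta(X)\to\mp\infty$ as $X\to\pm\infty$ are read off the potential: for $|X|$ large the static energy $V(2X)=2X^2-4X^4$ is very negative, so avoiding escape before $\tau=\pi$ forces $\dot U(0)$ of opposite sign and large absolute value. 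The strict inequality $\beta(X)>-\beta(-X)$ reflects the fact that the escape-to-$+\infty$ and escape-to-$-\infty$ thresholds are disjoint in the $(U,\dot U)$ plane, since a single orbit cannot blow up to both signs at the same time $\tau=\pi$.

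Translating each case back through $u=\Omega U$ is direct in the blow-up and scattering regimes, where standard conformal-compactification arguments control the $\mathcal H^s$ norms and produce the free wave $v^+_{X,Y}$ as the conformal image of the smooth extension of $U$ past $\tau=\pi$. The delicate step, which I expect to be the main obstacle, is the threshold: one must show that $\tau^\ast=\pi$ forces $\|\boldsymbol u(t)\|_{\mathcal H^{1/2}}\to\infty$ (with logarithmic rate) while $\|\boldsymbol u(t)\|_{\mathcal H^s}$ remains bounded for $s>\tfrac12$, and identify the self-similar profile $\pm\sqrt 2/t$ inside the light cone. Along the $t$-axis one has $\pi-\tau\sim 2/t$ and $\Omega\sim\tfrac{\pi-\tau}{2(1+r^2)}$, which combined with $U\sim\pm\sqrt 2/(\pi-\tau)$ reproduces $u\sim\pm\sqrt 2/t$ at $r=0$; promoting this pointwise asymptotic to a critical Sobolev statement requires sharp, not just leading-order, control of the ODE trajectory approaching the saddle $U=\pm 1$, which is what governs the logarithmic growth of the critical norm.
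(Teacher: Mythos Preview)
Your overall architecture---Penrose compactification reducing the problem to the Duffing ODE $\ddot U+U=U^3$, then reading the trichotomy from whether the ODE blow-up time $T_+$ is ${<}\pi$, ${>}\pi$, or ${=}\pi$---matches the paper exactly (up to a harmless factor of $2$ in the normalization of $U$).

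The substantive gap is in your construction of $\beta$ and its properties. The heuristic ``increasing $\dot U(0)$ accelerates escape'' does not by itself yield that $\beta$ is strictly decreasing in $X$, and the dependence of $T_+$ on $(X,Y)$ is genuinely subtle: orbits with $E<\tfrac14$ and $|U(0)|>1$ can have $U(0)\dot U(0)<0$, in which case $|U|$ first decreases to a turning point before escaping, and the resulting blow-up time is \emph{increasing} in $|X|$ and in $E$, opposite to the behaviour in the direct-escape region. The paper handles this by passing to $(X,E)$ coordinates, expressing $T_+$ as one of two explicit elliptic-type integrals $R$ or $S$ depending on the region, and checking monotonicity separately in each; the monotonicity of $S$ requires a nontrivial change of variable. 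Similarly, your argument for $\beta(X)>-\beta(-X)$ only gives $\beta(X)\neq-\beta(-X)$ (an orbit cannot blow up to both signs); the strict inequality requires the actual structure of the two branches, which the paper extracts by showing that they lie on opposite sides of the energy level $E=\tfrac14$.

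Finally, your last sentence contains a genuine misconception: the threshold solutions do \emph{not} approach the saddles $U=\pm1$. They blow up at $T_+=\pi$ with $U(s)\sim\sqrt2/(\pi-s)$; the orbits that approach the saddles are the $E=\tfrac14$ heteroclinics, which have $T_+=\infty$ and fall into the scattering regime. The logarithmic growth of $\|u(t)\|_{\dot H^{1/2}}$ comes from the self-similar tail $\sqrt2/t$ filling the interior of the light cone (computed via an explicit Fourier analysis of $\tfrac{1}{t}\indic_{\{|x|<t\}}$), not from any saddle dynamics.
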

\begin{figure}
    \centering
    \includegraphics{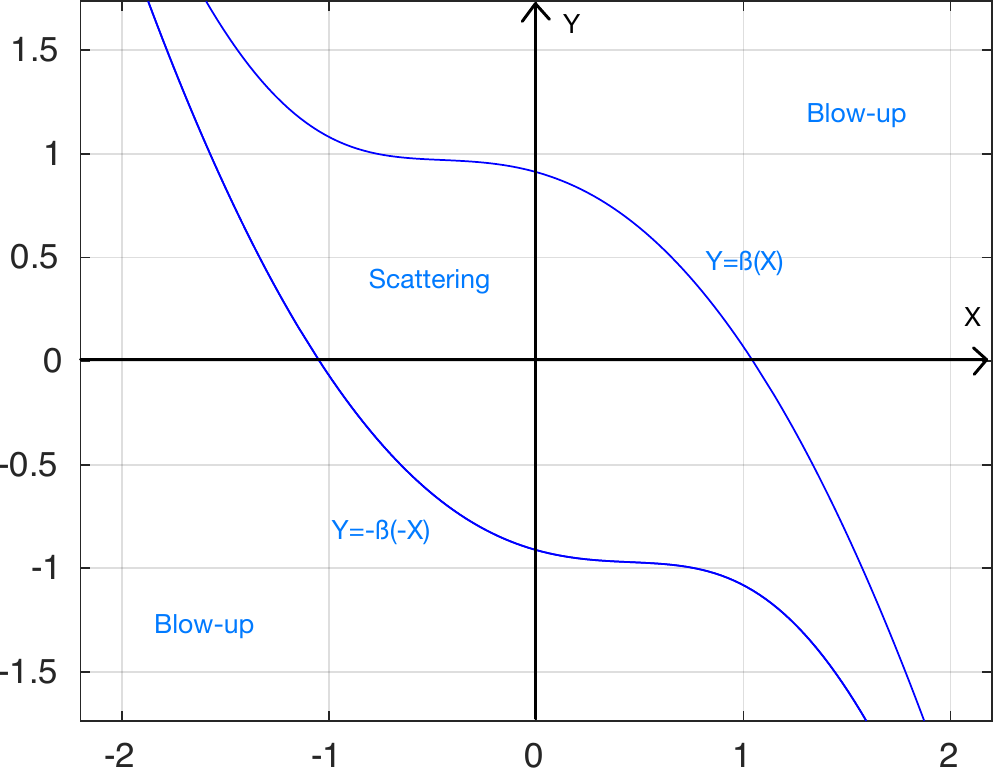}
    \caption{Illustration of Theorem~\ref{thm:main}; blow-up, scattering and asymptotically self-similar behaviour for $t>0$.}
    \label{fig:forward_time_phase_space}
\end{figure}
Figure~\ref{fig:forward_time_phase_space} contains a plot of this threshold function $\beta$. More precisely, $u_{X, Y}$ is radially symmetric and is given by 
\begin{equation}\label{eq:explicit_solution_formula}
u_{X,Y}(t,x)=\Omega(t,\lvert x \rvert) U_{X, Y}(\arctan(t+\lvert x \rvert)+\arctan(t-\lvert x \rvert)),
\end{equation}
 where $\Omega(t,r)=\frac{2}{\sqrt{1+(t+r)^2}\sqrt{1+(t-r)^2}}$, and $U(s)=U_{X, Y}(s)$ solves
 $$\ddot{U}+U=U^3,\quad U(0)=X,\; \dot U(0)=Y.$$
In particular, it follows immediately from these formulas that, if $(X, Y)\ne (X', Y')$, then the corresponding $u_{X, Y}$ and $u_{X', Y'}$ are not related by any of the symmetries of~\eqref{NLW}. We recall that the group of such symmetries is generated by scaling, space-time translations and Lorentz transformations.

\begin{remark}\label{rem:completely_explicit}Some of the solutions $u_{X, Y}$ are completely explicit. For $\lvert A\rvert<\sqrt 2$ and $\theta\in\mathbb R$,  the function
\begin{equation}\label{eq:explicit_global_intro}
    \begin{array}{ccc}
        U(s)=2 A \operatorname*{sn}(\omega s + \theta, k^2), & \text{where }\omega^2=1-\frac{A^2}{2},\,  k^2=\frac{A^2}{2-A^2},
    \end{array}
\end{equation}
and where $\operatorname*{sn}$ denotes Jacobi's elliptic sine, is a global solution to $\ddot{U}+U=U^3$. The formula~\eqref{eq:explicit_solution_formula} thus gives a corresponding family of completely explicit solutions to~\eqref{NLW}, which are part of the (Scattering) case of Theorem~\ref{thm:main}. On the other hand, for $T_+\in(0, \pi)$, the function 
\begin{equation}\label{eq:explicit_blowup_ODE}
    U(s)=\frac{\sqrt 2}{\sin(T_+-s)}
\end{equation}
is a solution to $\ddot{U}+U=U^3$ that blows up at $s=T_+$. The corresponding (Blow-up) solution $u$ to~\eqref{NLW} (given by~\eqref{eq:explicit_solution_formula}) has the following simple expression:
\begin{equation}\label{eq:u_a_b}   
    \begin{array}{cc}\displaystyle
        u(t, r)=\frac{2\sqrt 2}{a(1+r^2-t^2)-2bt},& \text{ where }a=\sin T_+, b=\cos T_+;
    \end{array}
\end{equation}
see the proof of the forthcoming Proposition~\ref{prop:blow_up_attractor}.
\end{remark}
We will give a precise description of the (Threshold) solutions below. One important property is that $u_{X, Y}$ is asymptotically close to the self-similar solution $(t, x)\mapsto~\sqrt 2 / t$, in the interior of the forward light cone $t>\lvert x \rvert$. Because of this, we refer to the (Threshold) solutions as \emph{asymptotically self-similar}.

\begin{remark}\label{rem:neg_times}     
     It is clear from~\eqref{NLW} that 
     \begin{equation}\label{eq:X_minus_X}
         u_{X, Y}(t,x)=u_{X,-Y}(-t,x).
    \end{equation}
     Combining this observation with Theorem~\ref{thm:main}, we see that for negative times one and only one of the following occurs:
     \begin{itemize}
  \item[(Blow-up)] If $Y>\beta (-X)$ or $Y<-\beta(X)$, then $\lvert t_-(X, Y)\rvert <\infty$ and 
  \begin{equation}
    \begin{array}{ccc}
      \lVert \boldsymbol{u}_{X, Y}(t)\rVert_{\mathcal H^s}~\to~\infty& \text{as } t\to t_-(X, Y),& \text{ for }s\ge \frac12.
    \end{array}
  \end{equation} 
  \item[(Scattering)] If $-\beta(X)<Y<\beta(-X)$, then  $t_-(X, Y)=-\infty$ and there is a smooth solution $v_{X,Y}^-$ to the linear wave equation $\partial^2_t v_{X,Y}^- = \Delta v_{X, Y}^-$ such that 
  \begin{equation}\label{eq:neg_scattering}
    \begin{array}{cc}\displaystyle
        \lVert \boldsymbol{u}_{X, Y}(t)- \boldsymbol{v}_{X, Y}^-(t)\rVert_{\mathcal H^{1/2}}\to 0, & \text{as }t\to-\infty.
    \end{array}
\end{equation}
  \item[(Threshold)] If $Y=\beta(-X)$ or $Y=-\beta(X)$, then  $t_-(X, Y)=-\infty$ and $\lVert \boldsymbol{u}_{X, Y}(t)\rVert_{\mathcal H^{1/2}}\,\to\,\infty$ as $t\to -\infty$.
 \end{itemize}
\end{remark}
\begin{figure}
    \centering
    \includegraphics{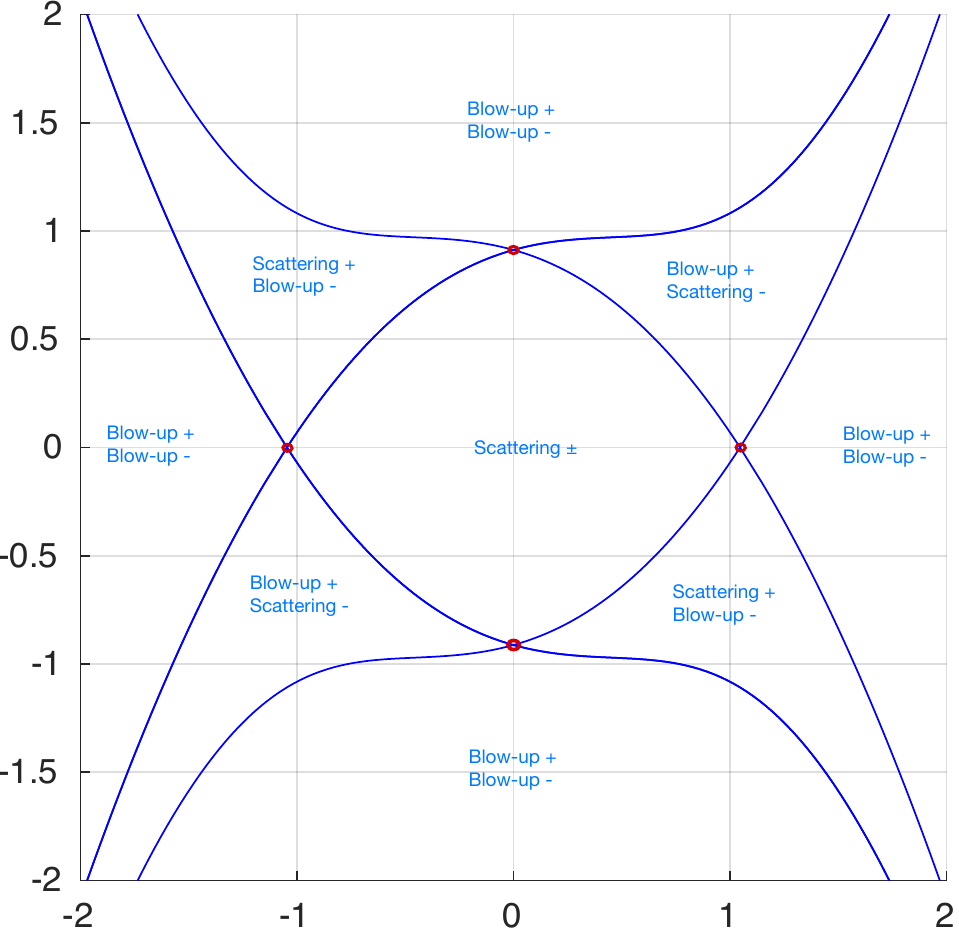}
    \caption{The nine possible behaviors in the plane $(X, Y)$. The dots mark the four unique points that give self-similar behavior at $\pm\infty$.}
    \label{fig:CompletePhaseSpace}
\end{figure}


The properties~\eqref{eq:beta_props} of the threshold function $\beta$ imply that the three sets defined for positive times and negative times have nonempty intersections, as depicted in Figure~\ref{fig:CompletePhaseSpace}. Thus all of the $9$ combined behaviours are possible. There are four remarkable solutions, whose initial data correspond to the dots in the figure. We have already mentioned these solutions, which are either even or odd in $t$ and are asymptotically self-similar both as $t\to\infty$ and as $t\to -\infty$; thus these solutions are \emph{homoclinic}. 

For the other values of $X$, $u_{X,\beta(X)}$ and $u_{X, -\beta(-X)}$ are \emph{heteroclinic}; they connect the asymptotic self-similar behaviour at $t\to\infty$ with another one (scattering or self-similar finite time blow-up) for negative $t$. Examples of heteroclinic orbits for \eqref{gNLW} were constructed in \cite{DuMe08} in the energy-critical case $p=5$. These solutions link the ground state of the equation with a blow-up or scattering behaviour. For the same equation, a nine-set classification, similar to Figure \ref{fig:CompletePhaseSpace}, but on the whole energy space $\mathcal{H}^1$, was obtained in \cite{KrNaSc15}. We remark that in the energy-critical case of these papers~\cite{DuMe08, KrNaSc15}, the threshold behaviour is given by solutions that are asymptotically close to the ground state $W$, possibly rescaled.

\subsection{Asymptotically self-similar solutions}
We next describe  the asymptotically self-similar threshold solutions (case (Threshold) in Theorem~\ref{thm:main}).
\begin{theo}[Self-similar behaviour inside the wave cone]
 \label{thm:self0}
Let $(X,Y)\in \mathbb{R}^2$ with $Y=\beta(X)$ or $Y=-\beta(-X)$. Denote $u=u_{X,Y}$. Then for all $r\ge 0$,
\begin{equation}
    \label{pointwiseCV}
    \limsup_{t\to\infty} \left( t^3\left| u(t,r)-\frac{\sqrt{2}}{t}\right|+t^4\left| \partial_t u(t,r)+\frac{\sqrt{2}}{t^2}\right|\right)<\infty.
\end{equation}
Furthermore, for all $p>\frac 32$, there exists $C>0$ such that 
\begin{equation}
    \label{LpCV0}
\left\|u(t)-\frac{\sqrt{2}}{t}\indic_{\{|x|\leq |t|\}}\right\|_{L^p(\mathbb R^3)}\leq C t^{\frac{2}{p}-1}.
\end{equation}
and, for all $p\geq 1$ and $\alpha\in (0,1)$, 
\begin{gather}
    \label{LpCV1}
    \left\|u-\frac{\sqrt{2}}{t}\right\|_{L^p(\{|x|\leq t-t^{\alpha}\})}\leq Ct^{\frac{2}{p}-1}t^{-\alpha\left(3-\frac{1}{p}\right)}\\
    \label{LpCV2}
    \left\|\partial_tu+\frac{\sqrt{2}}{t^2}\right\|_{L^p(\{|x|\leq t-t^{\alpha}\})}\leq Ct^{\frac{2}{p}-1}t^{-\alpha\left(2-\frac{1}{p}\right)}.
\end{gather}
\end{theo}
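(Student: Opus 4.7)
The core of the proof rests on an algebraic identity. The function $U_0(s):=\sqrt 2/\sin s$ solves the ODE $\ddot U+U=U^3$ and, when inserted into the representation~\eqref{eq:explicit_solution_formula}, reproduces the self-similar profile $\sqrt 2/t$ \emph{exactly}. Indeed
\[
\sin\bigl(\arctan(t+r)+\arctan(t-r)\bigr)=\frac{2t}{\sqrt{1+(t+r)^2}\sqrt{1+(t-r)^2}}=t\,\Omega(t,r),
\]
so $\Omega(t,r)U_0(s)\equiv\sqrt 2/t$. Setting $V(s):=U_{X,Y}(s)-U_0(s)$, the theorem reduces to estimates on
\[
u_{X,Y}(t,r)-\frac{\sqrt 2}{t}\,=\,\Omega(t,r)V(s)\,=\,\frac{\sin s}{t}V(s)
\]
and on its $t$-derivative. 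The input from Section~\ref{sec:ODE} is the decay of $V$ as $s\to\pi^-$: the linearisation of $\ddot U+U=U^3$ about $U_0$ is Fuchsian with indicial exponents $3$ and $-2$ at $\tau:=\pi-s=0$, and the threshold condition $Y=\beta(X)$ or $Y=-\beta(-X)$ is precisely the one suppressing the $\tau^{-2}$ mode. I shall use the bounds $|V(s)|\le C(\pi-s)^2$ and $|\dot V(s)|\le C(\pi-s)$ on $[0,\pi)$, together with the boundedness of $V$ on any interval $[s_0,\pi)$ with $s_0>0$.

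The pointwise estimate~\eqref{pointwiseCV} is then immediate: at fixed $r\ge 0$ and $t\to\infty$, $\pi-s\sim 2/t$ and $\Omega\sim 2/t^2$, whence $|\Omega V|\le C\Omega(\pi-s)^2=O(t^{-4})$; differentiating and using $|\partial_t\Omega|\lesssim\Omega(\pi-s)$ and $|\partial_t s|\lesssim(\pi-s)^2$ yields $|\partial_t(\Omega V)|=O(t^{-5})$. For the $L^p$ bounds I would switch to the variable $v:=t-r$, in which: (i) on the bulk $1\lesssim v\lesssim t$ one has $\pi-s\sim 1/v$ and $\Omega\sim 1/(tv)$; (ii) on the near-cone band $v\in[0,1]$, $\Omega\sim 1/(t\sqrt{1+v^2})$ while $s$ stays bounded away from $\pi$; (iii) on the exterior $r>t$, $s\in(0,\pi/2)$ is bounded away from $\pi$ so $|U(s)|$ is bounded.

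For~\eqref{LpCV0} decompose $\mathbb R^3$ into bulk, near-cone band, and exterior as above. The near-cone band gives $\int \Omega^p r^2\,dr\lesssim t^{2-p}\int_0^1(1+v^2)^{-p/2}\,dv\lesssim t^{2-p}$ for $p>1$; the bulk is of the same order (in fact better since $|V|$ decays); the exterior contributes $\lesssim t^{3-2p}=o(t^{2-p})$ when $p>3/2$. Taking $p$-th roots yields~\eqref{LpCV0}. For~\eqref{LpCV1}, on $|x|\le t-t^\alpha$ the dominant range is $v\in[t^\alpha,t/2]$, where $r^2\sim t^2$; inserting $|\Omega V|\lesssim 1/(tv^3)$ gives
\[
\int|\Omega V|^p r^2\,dr\ \lesssim\ t^{2-p}\int_{t^\alpha}^{t/2}v^{-3p}\,dv\ \lesssim\ t^{2-p-\alpha(3p-1)},
\]
whose $p$-th root is $t^{2/p-1-\alpha(3-1/p)}$. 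For~\eqref{LpCV2}, the same bookkeeping applied to $\partial_t(\Omega V)=(\partial_t\Omega)V+\Omega\dot V\,\partial_t s$, with $|\partial_t\Omega|\lesssim 1/(tv^2)$ and $|\partial_t s|\lesssim 1/v^2$ and using only $|V|\le M$, produces the dominant bound $|\partial_t(\Omega V)|\lesssim 1/(tv^2)$ and the claimed exponent $2/p-1-\alpha(2-1/p)$.

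The main substantive step is the ODE analysis yielding the decay of $V$, which is the content of Section~\ref{sec:ODE}; once available, the rest is an elementary asymptotic computation, considerably simplified by the identity $\sin s=t\,\Omega$.
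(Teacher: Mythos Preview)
Your approach via the exact identity $\sin s = t\,\Omega(t,r)$ (so that $\Omega\,U_0\equiv\sqrt 2/t$ for $U_0(s)=\sqrt 2/\sin s$) is genuinely different from the paper's. The paper never invokes this identity in the proof; instead it establishes Proposition~\ref{prop:self1} by expanding $\Omega$, $s$ and $U(s)$ separately in inverse powers of $t-r$, using only the asymptotic $U(s)=\sqrt 2/(\pi-s)+O(\pi-s)$ of Lemma~\ref{lem:asympt_duffing}, and reads off the pointwise bound $|u-\sqrt 2/t|\le C/(t(t-r)^2)$ together with the analogous bound on $\partial_t u$. Theorem~\ref{thm:self0} then follows by integrating those pointwise bounds over the three regions $\{r\le t-1\}$, $\{|r-t|\le 1\}$, $\{r\ge t+1\}$. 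Your identity collapses several of these separate expansions into the single difference $V=U-U_0$, which is conceptually cleaner and makes the cancellation producing the $\sqrt 2/t$ profile completely transparent.

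One caveat: the ODE input you quote is stronger than what Section~\ref{sec:ODE} actually supplies. Lemma~\ref{lem:asympt_duffing} gives only $U(s)=\sqrt 2/\tau+O(\tau)$ with $\tau=\pi-s$, hence only $V=O(\tau)$; the Fuchsian analysis with indicial roots $3$ and $-2$ that you invoke is not carried out anywhere in the paper. Your reasoning is nevertheless correct in substance---matching powers in $\ddot U+U=U^3$ forces the expansion $U=\sqrt 2/\tau+\sqrt 2\,\tau/6+c\,\tau^3+\cdots$ with only the $\tau^3$ coefficient free, so in fact $V=O(\tau^3)$---but this is an additional lemma you would have to supply. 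With only $V=O(\tau)$ your method reproduces exactly the paper's pointwise bound $|\Omega V|\lesssim 1/(t(t-r)^2)$, and hence the same $L^p$ consequences as one obtains directly from Proposition~\ref{prop:self1}; the sharper decay $V=O(\tau^2)$ you use is what is needed to reach the stated exponent $-\alpha(3-1/p)$ in~\eqref{LpCV1}. (Also note that $V$ is \emph{not} bounded on all of $[0,\pi)$, since $U_0$ blows up at $s=0$; your estimates only need $s$ bounded away from $0$, which is indeed the case in the regions $\{r\le t\}$ for large $t$, so this is harmless but worth stating precisely.)
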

We note that $\left\lVert\frac{\sqrt{2}}{t}\right\|_{L^p(\{|x|\leq t\})}\approx t^{\frac{3}{p}-1}$,  which is larger than the right hand-side of \eqref{LpCV0}, \eqref{LpCV1}, and that $\left\lVert\frac{\sqrt{2}}{t^2}\right\|_{L^p(\{|x|\leq t\})}\approx t^{\frac{3}{p}-2}$, which is larger than the right-hand side of \eqref{LpCV2} for $\alpha>\frac{p-1}{3p-1}$. Thus our estimates in Theorem~\ref{thm:self0} are meaningful.

We next study the behaviour of threshold solutions in a close neighborhood of the wave cone.
\begin{theo}
\label{thm:self0'}
Let $(X,Y)\in \mathbb{R}^2$ with $Y=\beta(X)$ or $Y=-\beta(-X)$. Denote $u=u_{X,Y}$. Then there exists a solution $v_L$ of the linear wave equation \eqref{FW}, with radial initial data $(v_0,v_1)\in \cap_{s>1/2} \mathcal{H}^{s}$, $(v_0,v_1)\notin \mathcal{H}^{1/2}$ such that for all $A\in \mathbb{R}$
\begin{equation}
\label{asymptotic-linear}
\lim_{t\to\infty}\int_{|x|>t+A} \big|\nabla (u(t,x)-v_L(t,x))\big|^2+\big(\partial_t(u(t,x)-v_L(t,x))\big)^2\,dx=0.
\end{equation}
\end{theo}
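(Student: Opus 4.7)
The plan is to construct $v_L$ from the ``radiation profile'' of $u$ at future null infinity, exploiting the explicit formula~\eqref{eq:explicit_solution_formula}. Setting $G(\sigma) := U(\pi/2 - \arctan\sigma)/\sqrt{1+\sigma^2}$ for $\sigma \in \mathbb R$, one checks directly from~\eqref{eq:explicit_solution_formula} that $\lim_{t \to \infty} r u(t, t+\sigma) = G(\sigma)$ pointwise in $\sigma$. For a threshold solution $U$ is defined on $[0, \pi)$ with the singular behaviour $U(\tau) \sim \sqrt 2/(\pi - \tau)$ at $\tau = \pi$ (analogous to the blow-up profile~\eqref{eq:explicit_blowup_ODE}, which underlies Theorem~\ref{thm:self0}); combined with $U(0) = X$, this yields $G(+\infty) = 0$, $G(-\infty) = \sqrt 2$, and $G' \in L^2(\mathbb R)$.

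Setting $\phi(\tau) := \sqrt 2 - G(-\tau)$, I define the radial free wave
\[
v_L(t, r) := \frac{\phi(t+r) - \phi(t-r)}{r} = \frac{G(r-t) - G(-r-t)}{r},
\]
which is the standard d'Alembert representation of a radial solution of~\eqref{FW}. The initial data $(v_L(0), \partial_t v_L(0)) = ([G(r)-G(-r)]/r,\, [G'(-r)-G'(r)]/r)$ have odd smooth numerators and are therefore smooth on $\mathbb R^3$; their $\mathcal H^1$-norm is bounded by $\|\phi'\|_{L^2(\mathbb R)} = \|G'\|_{L^2(\mathbb R)} < \infty$, and the analogous computation gives $\mathcal H^s$-control for every $s > 1/2$. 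However, $v_L(0, r) \to -\sqrt 2/r$ as $r \to \infty$, and since $1/|x| \notin \dot H^{1/2}(\mathbb R^3)$, one has $(v_L(0), \partial_t v_L(0)) \notin \mathcal H^{1/2}$, as required.

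For the exterior estimate, I switch to the conformal coordinates $s_+ := \arctan(t+r)$ and $c := \arctan(r-t)$, so that $0 \le c < s_+ < \pi/2$ in the exterior $\{r > t\}$ and $r = (\tan s_+ + \tan c)/2$. A direct substitution into the explicit formulas produces the identities
\[
r u = \sin(s_+ + c)\, U(s_+ - c), \qquad r v_L = H(\pi/2 - c) - H(\pi/2 + s_+),
\]
where $H(y) := U(y)\sin y$. The crucial point is that $H$ extends smoothly to $y = \pi$ with $H(\pi) = \sqrt 2$ and $H'(\pi) = 0$, thanks to the detailed expansion of $U$ near its singular endpoint (which I would extract from the ODE analysis underlying Theorem~\ref{thm:self0}). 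A Taylor expansion in $\varepsilon := \pi/2 - s_+ \asymp 1/r$, uniform in $c \in [\arctan A, \pi/2]$, then gives $r(u - v_L) = \sqrt 2 + \varepsilon K(c) + O(\varepsilon^2)$ with $K(c) = \sin c\, U(\pi/2-c) - \cos c\, U'(\pi/2-c)$ bounded; converting to Minkowski derivatives (using $\cos^2 s_+ \asymp \varepsilon^2$ and $\partial_t s_+ = \partial_r s_+ = \cos^2 s_+$, $\partial_t c = -\partial_r c = -\cos^2 c$) produces the pointwise bound $|\nabla(u - v_L)|^2 + (\partial_t(u - v_L))^2 = O(1/r^4)$ on $\{r > t+A\}$. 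Integrating in spherical coordinates yields $\int_{|x|>t+A} \cdots\, dx = O(1/t) \to 0$.

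The main obstacle is the smoothness of $H$ at $y = \pi$: one needs a precise asymptotic expansion of the threshold ODE solution $U$ past the leading singular term $\sqrt 2/(\pi - \tau)$, and must control (or rule out) low-order logarithmic corrections so that $H$ is at least $C^2$ up to $\pi$. Once this regularity is in hand, the Taylor expansion combined with a dominated convergence argument closes the proof.
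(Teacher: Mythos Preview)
Your construction of $v_L$ coincides with the paper's: your profile $G(\sigma)$ is exactly $-g(\sigma)$ in Lemma~\ref{lem:radiation}, and the resulting free wave, its initial data, and the argument that $v_0(r)\sim -\sqrt 2/r$ forces $(v_0,v_1)\notin\mathcal H^{1/2}$ are the same as in the paper's proof (compare your formula for $(v_L(0),\partial_t v_L(0))$ with~\eqref{defv0v1}).

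Where you diverge is in the proof of the exterior limit~\eqref{asymptotic-linear}. The paper first proves (Lemma~\ref{lem:radiation}) the pointwise limits $r\partial_t u(t,t+\eta)\to g'(\eta)$, $r\partial_r u(t,t+\eta)\to -g'(\eta)$ together with an integrable dominating bound $|r\partial_t u|+|r\partial_r u|\le C(A)/(1+\eta^2)$, and then applies dominated convergence; the same is done for $v_L$ via the d'Alembert formula, and the two profiles match. This uses only the crude estimates of Lemma~\ref{lem:asympt_duffing}. Your route through the conformal coordinates $(s_+,c)$ and the function $H(y)=U(y)\sin y$ is more quantitative: it yields a pointwise $O(1/r^4)$ bound on $|\nabla(u-v_L)|^2+(\partial_t(u-v_L))^2$ and hence a rate $O(1/t)$ in~\eqref{asymptotic-linear}, at the price of needing $H\in C^2$ at $y=\pi$. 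Your worry about logarithms is unfounded: inserting $U(\pi-\tau)=\sqrt 2/\tau+\sum_{k\ge 0}a_k\tau^k$ into $\ddot U+U=U^3$ forces $a_0=a_2=0$, $a_1=\sqrt 2/6$, with $a_3$ the free second parameter and all later coefficients determined recursively without resonance, so $U$ has a genuine simple pole at $\pi$ and $H$ is analytic there. With that recorded, your Taylor argument goes through.
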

Note that if $u$ is a scattering solution of \eqref{NLW} and \eqref{ID}, then \eqref{asymptotic-linear} holds for a linear solution $v_L$ with initial data in $\mathcal{H}^{1/2}\cap \mathcal{H}^{1}$. The fact that $(v_0,v_1)\notin \mathcal{H}^{1/2}$ is linked to the transition to the self-similar solution $\sqrt{2}/t$ in the interior of the wave cone $\{|x|<t\}$ close to the boundary $|x|=t$, see Remark \ref{rem:transition} below. The existence of a linear profile $v_L$ satisfying \eqref{asymptotic-linear} is standard for global solutions of \eqref{gNLW} which are bounded in the critical Sobolev space, and is the first step toward the asymptotic description of the solution: see  e.g. \cite{DuKeMe13}, \cite{DuKeMe14} and \cite{Shen14} for the radial case, and also \cite{DuKeMe19} for the energy-critical power without symmetry assumption.

Theorem \ref{thm:self0} is coherent with Conjecture 3 of the work of Bizo\'n and Zengino\v{g}lu \cite{BizZen09}, which predicted that solutions at the threshold between scattering and stable blow-up should satisfy an estimate similar to \eqref{pointwiseCV}. Note however that strictly speaking, the estimate predicted by \cite{BizZen09}, which amounts to 
$$\limsup_{t\to\infty} t^4\left|u(t,r)-\frac{\sqrt{2}}{t}\right|<\infty$$
is false in our case, since we can show that $u(t,r)-\sqrt{2}/t$ is exactly of order $1/t^3$. (We remark that we will also discuss below the connection between Conjecture 2 of \cite{BizZen09} and our (Blow-up) solutions).

The work~\cite{DonningerZenginoglu} is related to the previous conjecture. Indeed in~\cite{DonningerZenginoglu} Donninger and Zengino\v{g}lu constructed a manifold of solutions asymptotically close to the self-similar solution $\sqrt{2}/t$ inside the wave cone for $t\to \infty$. However, to tackle with the fact that $\sqrt{2}/t\notin \dot{H}^{1/2}(\mathbb{R}^3)$ at fixed $t$, they consider a different initial value problem, with initial data on a spacelike hyperboloid. These solutions do not a priori correspond to solutions of the Cauchy problem \eqref{NLW}, \eqref{ID}.
Our construction shows that this solution $\sqrt{2}/t$, despite its lack of decay at infinity, appears in the asymptotics of solutions of the usual Cauchy problem for \eqref{NLW}; in particular, note that we prescribe the initial data at $t=0$ and require spatial decay at infinity. 

In view of the works cited above, we believe that general global non-scattering have a similar asymptotic behaviour as the (Threshold)  solutions $u_{X,\beta(X)}$ and $u_{X, -\beta(-X)}$. We thus conjecture: 
\begin{conj}
\label{conj:global}
Let $u$ be a solution of \eqref{NLW}, \eqref{ID} defined for $t\in [0,\infty)$ and that does not scatter to a linear solution as $t\to\infty$. Then there exists a self-similar solution $S(t,x)=\frac{1}{t}\varphi\left(\frac{x}{t}\right)$ of \eqref{NLW}, such that
\begin{equation}
\label{self-similarS}
    \lim_{t\to\infty}\left\|u(t)-S(t)\right\|_{L^3(\{|x|<t-\sqrt{t}\}}+\left\|\partial_tu(t)+\partial_tS(t)\right\|_{L^{3/2}(\{|x|<t-\sqrt{t}\}) }=0.
\end{equation}
Furthermore, there exists a solution $v_L$ of the linear wave equation, with initial data $\boldsymbol{v}_L(0)\in \bigcap_{1/2<s\leq 1}\mathcal{H}^s$, $\boldsymbol{v}_L(0)\notin \mathcal{H}^{1/2}$, such that \eqref{asymptotic-linear} holds.
\end{conj}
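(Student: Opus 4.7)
The plan is to adapt the concentration-compactness and rigidity scheme of Kenig-Merle, in the spirit of the soliton resolution results~\cite{DuKeMe13, DuJiKeMe17a} for the energy-critical wave equation, but with the role of the stationary bubbles $W$ (which do not exist in $\dot H^{1/2}$ for $p=3$, as recalled in the introduction) replaced by the self-similar profiles $S(t,x)=\tfrac{1}{t}\varphi(x/t)$.

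First, I would combine the hypothesis that $u$ does not scatter with the rigidity theorem of Dodson-Lawrie~\cite{DodsonLawrie15} (together with a presumed non-radial extension) to deduce that $\|\boldsymbol{u}(t_n)\|_{\mathcal{H}^{1/2}}\to\infty$ along some sequence $t_n\to\infty$. Then, using the conformal inversion~\eqref{eq:conformal_inversion}, I would transport this long-time blow-up of the critical norm to a genuine singularity-formation problem at a finite point of a Penrose-compactified spacetime; this is the central virtue of the critical exponent $p=3$ and is precisely what makes the explicit family~\eqref{eq:explicit_solution_formula} tractable.

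Next, a profile decomposition \`a la Bahouri-G\'erard in $\dot H^{1/2}$ along $\boldsymbol u(t_n)$ produces a collection of nonlinear profiles at various scales. Since $-\Delta Q=Q^3$ admits no nonzero $\dot H^{1/2}$ solution, no profile can be a stationary bubble; since moreover $\boldsymbol u(0)\in \mathcal H^1\cap \mathcal H^{1/2}$ provides enough decay at infinity, the only profiles compatible with the conformal compactification should be self-similar ones concentrated at the tip of the light cone at infinity. Following the standard minimal-element argument, I would extract a non-scattering trajectory that is precompact in $\mathcal H^{1/2}$ modulo the symmetries of~\eqref{NLW} and classify it: in conformal coordinates it becomes stationary (or periodic), which in the original coordinates amounts to a self-similar profile $\varphi$ on the unit ball satisfying a nonlinear elliptic equation. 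Combined with uniqueness of the limit inside the cone, this yields~\eqref{self-similarS}; the $L^3$ and $L^{3/2}$ norms appearing there are precisely the natural scale-invariant ones for $u$ and $\partial_t u$ respectively.

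For the exterior part, I would apply the exterior energy lower bounds for free radial waves of Duyckaerts-Kenig-Merle (together with their non-radial counterparts) to show that the portion of $\boldsymbol u(t)$ living in $\{|x|>t+A\}$ is asymptotically free, producing a linear profile $v_L$ with data in $\bigcap_{1/2<s\le 1}\mathcal H^s$. That $\boldsymbol v_L(0)\notin \mathcal H^{1/2}$ is forced by the transition across the light cone: since $\tfrac{\sqrt 2}{t}\indic_{\{|x|<t\}}$ fails to lie in $\dot H^{1/2}$ at fixed $t>0$, the jump across $|x|=t$ must be absorbed by $v_L$. The main obstacle is unambiguously the rigidity step: classifying the minimal non-scattering trajectory requires both a full understanding of the self-similar elliptic problem for $\varphi$ (only partial information being available, cf.~\cite{BizonBreitenlohnerMaisonWasserman}) and, even more delicately, ruling out Lorentz-boosted or multi-bubble configurations of such profiles — the former being particularly subtle because the symmetry group of~\eqref{NLW} acts nontrivially on self-similar solutions and yields genuine moving analogues that cannot be dismissed on symmetry grounds alone.
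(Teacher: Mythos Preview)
The statement you are attempting to prove is labeled \texttt{Conjecture~\ref{conj:global}} in the paper and is explicitly presented as an open problem; the authors write ``We thus conjecture'' just before it, and nowhere in the paper is a proof (or even a partial proof) given. There is therefore no ``paper's own proof'' to compare your proposal against.

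What you have written is not a proof but a plausible-sounding program, and you yourself flag its essential gap at the end. A few of the steps you treat as routine are in fact not available: the Dodson--Lawrie result \cite{DodsonLawrie15} is radial, and no non-radial extension is known; the profile-decomposition step in $\dot H^{1/2}$ would need to be run along a sequence where the critical norm diverges, which is exactly the regime where the usual Bahouri--G\'erard machinery loses control of the remainder; and the claim that ``the only profiles compatible with the conformal compactification should be self-similar ones'' is the whole content of the conjecture, not an intermediate lemma. The classification of self-similar profiles for \eqref{NLW} is itself open (the paper cites \cite{BizonBreitenlohnerMaisonWasserman} for the construction of a countable family, with no uniqueness), so even the rigidity step you identify as the main obstacle is currently out of reach. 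In short, your outline correctly identifies the shape such an argument might take, but none of its nontrivial ingredients presently exist, which is precisely why the authors state this as a conjecture rather than a theorem.
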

Of course, the work \cite{BizZen09} suggest that we should have $S(t)=\pm\sqrt{2}/t$ for generic global non-scattering solution of \eqref{NLW}, but in full generality, we cannot exclude the appearance of other self-similar solutions, such as the ones constructed in \cite{BizonBreitenlohnerMaisonWasserman}. We also conjecture that the structure of the set of solutions asymptotically close to $\sqrt{2}/t$ inside the wave cone is similar to the one obtained in \cite{DonningerZenginoglu}:
\begin{conj}
\label{conj:manifold}
The set of initial data of solutions of \eqref{NLW}, \eqref{ID} such that \eqref{self-similarS} holds with $S(t)=\sqrt{2}/t$ is a submanifold of codimension $1$ of $\mathcal{H}^1\cap \mathcal{H}^{1/2}$, that separates scattering and ODE blow-up.
\end{conj}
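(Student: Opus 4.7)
The natural plan is to apply an invariant manifold construction around the self-similar attractor $S(t,x)=\sqrt{2}/t$ in suitable hyperboloidal coordinates. First, I would introduce similarity variables $\tau=\log t$, $\rho=|x|/t$, in which $S$ becomes a time-independent equilibrium on the unit ball $\{\rho<1\}$ and \eqref{NLW} becomes an autonomous nonlinear evolution in $\tau$. This is essentially the setting of \cite{DonningerZenginoglu}, but one would now need to graft a companion description outside the cone in order to glue the resulting object back to a genuine Cauchy problem with $\mathcal{H}^{1/2}\cap \mathcal{H}^1$ data at $t=1$.

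Second, I would carry out a careful spectral analysis of the linearization at this equilibrium. The codimension-one statement rests on showing that the linear operator has exactly one eigenvalue with positive real part (modulo a finite-dimensional kernel coming from symmetries, which can be factored out by modulation). The two-parameter family from Theorem~\ref{thm:main}, once pushed into self-similar coordinates, should provide a one-dimensional transverse slice that identifies the conjectured unstable direction and fixes its normalization, the explicit threshold curve $Y=\beta(X)$ playing the role of the local graph of the manifold inside the two-dimensional slice~\eqref{eq:uAB_initial}.

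Third, I would apply a center-stable manifold theorem in the resulting weighted space, in the spirit of \cite{DuMe08} or \cite{KrNaSc15}, to produce a codimension-one Lipschitz submanifold of $\mathcal{H}^{1/2}\cap \mathcal{H}^1$ whose forward evolutions relax to $\sqrt{2}/t$ in the sense of \eqref{self-similarS}. The exterior linear radiation $v_L$ of Theorem~\ref{thm:self0'} should arise automatically from finite speed of propagation combined with a Strichartz argument outside the cone. The separation statement would then be closed by invoking the stable-manifold construction of \cite{DonningerSchorkhuber16} around the ODE profile: initial data on one side of the center-stable manifold would fall into the basin of ODE blow-up, while data on the other side would scatter, by a combination of small-data scattering and a shooting/intermediate value argument along transverse one-parameter families.

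The main obstacle is precisely the incompatibility between the attractor $\sqrt{2}/t$ and the critical Sobolev space $\mathcal{H}^{1/2}$, which forces the whole analysis to be carried out in function spaces that are not invariant under the global linear flow on $\mathbb{R}^3$; this is also the reason why the conjecture is not a direct corollary of \cite{DonningerZenginoglu}. In particular, showing that $\mathcal{H}^{1/2}\cap\mathcal{H}^1$ data give rise to solutions whose interior part falls into the regime of \cite{DonningerZenginoglu}, and conversely that every solution in that regime extends to a genuine solution of~\eqref{NLW}, \eqref{ID}, seems to require a delicate two-scale framework that tracks simultaneously the asymptotically self-similar interior profile and the finite-regularity exterior scattering tail. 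Establishing that the linearization has exactly one unstable eigenvalue, rather than many or none, is itself a nontrivial Sturm--Liouville problem that the explicit one-parameter family can only corroborate but not settle on its own.
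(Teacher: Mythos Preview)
The statement you are attempting to prove is \emph{Conjecture}~\ref{conj:manifold}. The paper does not prove it; it is explicitly stated as an open problem, motivated by the construction of the two-parameter family $u_{X,Y}$ and by analogy with~\cite{DonningerZenginoglu}. There is therefore no proof in the paper to compare your proposal against.

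What you have written is not a proof but a plausible research outline, and you clearly recognize this yourself: you correctly identify the main analytic obstruction (the attractor $\sqrt{2}/t$ is not in $\dot H^{1/2}$, so one cannot work in a single scaling-invariant space) and you flag the spectral gap question as unresolved. Each of your three steps contains a genuine open problem. The spectral analysis of the linearization at $\sqrt{2}/t$ in self-similar variables is partially available in~\cite{DonningerZenginoglu}, but only on the interior of the light cone and in hyperboloidal rather than Cauchy coordinates; transferring this to $\mathcal{H}^{1/2}\cap\mathcal{H}^1$ data on a constant-time slice is exactly the gap the paper points to when it says the solutions of~\cite{DonningerZenginoglu} ``do not a priori correspond to solutions of the Cauchy problem~\eqref{NLW},~\eqref{ID}''. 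The gluing of interior self-similar behaviour with exterior radiation, which you propose to handle by ``a delicate two-scale framework'', is the heart of the difficulty and no existing technique carries it out. So your proposal is a reasonable roadmap, but it should be presented as such and not as a proof.
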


The role of self-similar solutions in the threshold dynamics for equation \eqref{gNLW} when $p\neq 5$ (or more generally, in space dimension $N$, when $p\neq \frac{2N}{N-2}$) was highlighted in several previous works:
\begin{itemize}
    \item For the supercritical power $p=7$, the article \cite{BizonChmajTabor04} gives numerical evidence that, in this case, solutions at the threshold are blow-up solutions with an unstable self-similar profile (distinct from the stable ODE solution).
\item 
In higher space dimensions the papers \cite{GlMaSc20} and \cite{GlogicSchorkhuber21}  (for the cubic wave equation) and \cite{CsGlSc21P} (for the quartic wave equation) have exhibited an explicit unstable self-similar solution. In both cases, the power is energy super-critical. These works also show (theoretically and numerically) that at least in some cases, this solution has the generic threshold behaviour. Let us underline that the threshold solutions in \cite{BizonChmajTabor04}, \cite{GlMaSc20}, \cite{GlogicSchorkhuber21} and \cite{CsGlSc21P} are all finite-time blow-up solutions, in contrast with the solutions $u_{X,\beta(X)}$ and $u_{X, -\beta(-X)}$ that we construct here.
\item Other asymptotically self-similar solutions were constructed by Krieger and Schlag in \cite{KriegerSchlag17} for the equation \eqref{gNLW} with $p=7$ ($s_c=7/6$). Like our $u_{X,\beta(X)}$ and $u_{X, -\beta(-X)}$, these solutions are global and have an asymptotic self-similar behaviour inside the wave cone. However, the initial data of these solutions are in $\mathcal{H}^{s}$ for $s>7/6$, but neither in the critical space $\mathcal{H}^{7/6}$ nor in the energy space $\mathcal{H}^1$.
\end{itemize}

We next give precise asymptotics of the $L^p$ and $\mathcal{H}^s$ norms of the threshold solutions. We will use the following notation: we denote $a(t)=O(b(t))$ if there exists a constant $C$ independent of $t$ such that $|a(t)|\leq Cb(t)$ for large $t$, and $a(t)\approx b(t)$ if $a(t)=O(b(t))$ and $b(t)=O(a(t))$.
\begin{theo}[Asymptotics of Lebesgue and Sobolev norms at the threshold]
\label{thm:self2}
Let $(X,Y)\in \mathbb{R}^2$ with $Y=\beta(X)$ or $Y=-\beta(-X)$. Denote $u=u_{X,Y}$. Then we have the following asymptotics for large $t$. 
If $p>\frac 32$,
\begin{gather}
\label{eq:asymptoticLp}
    \frac{1}{(4\pi)^{1/p}}\|u(t)\|_{L^p(\R^3)}=\frac{\sqrt{2}}{3^{\frac 1p}}t^{\frac{3}{p}-1}+O\left(t^{\frac 2p-1}\right),\\
    \label{eq:boundLp}
    \forall M>0,\quad \|u(t)\|_{L^p(\{|x|>t-M\})}=O\left(t^{\frac{2}{p}-1}\right),
\end{gather}
while if $p\geq 1$,
\begin{equation}
    \label{eq:asymptoticdtLp}
     \|\partial_tu(t)\|_{L^p} \approx t^{-1+\frac{2}{p}}.
\end{equation}
On the other hand, if $0\leq \nu <1/2$,
\begin{equation}
\label{eq:asymptoticHs}
\|u(t)\|_{\dot{H}^\nu}^2=\kappa_{\nu}t^{1-2\nu}+O\left(t^{\frac 12-\nu}\right),\quad \|\partial_tu(t)\|_{\dot{H}^{\nu-1}}^2=O\left(t^{1-2\nu}\right),
\end{equation}
where $\kappa_{\nu}=128\pi^3 \int_0^{\infty} (\sin \sigma-\sigma \cos \sigma)^2\sigma^{-4+2s}d\sigma$, and moreover
\begin{equation}
    \label{eq:asymptoticH1/2}
\|u(t)\|_{\dot{H}^{1/2}}^2=64\pi^3\log t+O(\sqrt{\log t}),\quad  \|\partial_tu(t)\|_{\dot{H}^{-1/2}}^2=O(\log t).
\end{equation}
 Finally, if $\nu>1/2$ 
 \begin{equation}
     \label{eq:asymptoticHs'}
     \|u(t)\|_{\dot{H}^\nu}^2\approx 1,\quad \|\partial_tu(t)\|_{\dot{H}^{\nu-1}}^2\approx 1.
\end{equation}
\end{theo}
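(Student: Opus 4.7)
The plan is to compare $u(t,\cdot)$ with the explicit model $f_t(x)=(\sqrt 2/t)\mathbf{1}_{\{|x|\le t\}}$, compute the relevant norms of $f_t$ by hand, and control the residue $u-f_t$ using the decay estimates of Theorem~\ref{thm:self0}, the linear exterior profile from Theorem~\ref{thm:self0'}, and the explicit formula~\eqref{eq:explicit_solution_formula}. For the Lebesgue identities~\eqref{eq:asymptoticLp}--\eqref{eq:asymptoticdtLp}, the direct computation $\|f_t\|_{L^p}^p=\tfrac{4\pi(\sqrt{2})^p}{3}t^{3-p}$ produces the leading constant $\sqrt 2/3^{1/p}$, and then~\eqref{LpCV0} shows the residue is $O(t^{2/p-1})$, which is genuinely subleading once $p>3/2$. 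The near-cone bound~\eqref{eq:boundLp} follows from~\eqref{eq:explicit_solution_formula} together with the blow-up profile $U(s)\sim\pm\sqrt 2/(\pi-s)$ at the boundary $s=\pi$, which shows $|u(t,r)|\lesssim 1/t$ on a shell of volume $\lesssim t^2$. For $\partial_t u$, the interior estimate~\eqref{LpCV2} forces the bulk $\{|x|\le t-t^\alpha\}$ to contribute only $O(t^{2/p-1-\alpha(2-1/p)})$, so the main mass sits in the transition layer $\{|t-|x||\le 1\}$ where $|\partial_t u|\lesssim 1/t$ on a region of volume $\asymp t^2$; the matching two-sided bound $\asymp t^{2/p-1}$ in~\eqref{eq:asymptoticdtLp} then follows.

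For the Sobolev estimates, we compute the radial Fourier transform
\begin{equation}
    \widehat{f_t}(\xi)=\frac{4\pi\sqrt 2}{t\,|\xi|^3}\bigl(\sin(t|\xi|)-t|\xi|\cos(t|\xi|)\bigr),
\end{equation}
and the substitution $\sigma=t|\xi|$ gives $\|f_t\|_{\dot H^\nu}^2=\kappa_\nu t^{1-2\nu}$ with the $\kappa_\nu$ of the statement. The defining integral converges precisely for $\nu<1/2$, since its integrand behaves like $\sigma^{2+2\nu}$ at zero and like $\sigma^{2\nu-2}\cos^2\sigma$ at infinity. For $\nu<1/2$ this gives the leading term of~\eqref{eq:asymptoticHs}; the error $O(t^{1/2-\nu})$ comes from the cross term $\langle f_t,R\rangle_{\dot H^\nu}$ with $R=u-f_t$, provided $\|R\|_{\dot H^\nu}=O(1)$, which we obtain by interpolating the $L^2$ bound from~\eqref{LpCV1} against a higher-order $\dot H^s$ bound coming from the smoothness of~\eqref{eq:explicit_solution_formula}. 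The critical case $\nu=1/2$ is precisely where the $\sigma$-integral diverges logarithmically at infinity; since the transition layer near $|x|=t$ has width $O(1)$, the Fourier side is effectively truncated at $\sigma\asymp t$, producing $\|u(t)\|_{\dot H^{1/2}}^2=64\pi^3\log t+O(\sqrt{\log t})$ (the constant $64\pi^3$ being the coefficient of $\log t$ in $128\pi^3\int_1^t \cos^2\sigma\cdot\sigma^{-1}d\sigma$). For $\nu>1/2$, $f_t$ leaves $\dot H^\nu$ but $u$ remains there because the cone crossing is smooth: the upper bound $\|u\|_{\dot H^\nu}\lesssim 1$ follows from~\eqref{eq:explicit_solution_formula} via Leibniz and uniform Sobolev bounds on $\Omega$ and $s(t,\cdot)$, while the lower bound $\gtrsim 1$ follows from the non-trivial, $t$-independent profile of $u$ across the shell $\{|t-|x||\asymp 1\}$. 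The $\dot H^{\nu-1}$ estimates for $\partial_t u$ are analogous: the relevant model $(-\sqrt 2/t^2)\mathbf{1}_{\{|x|\le t\}}=(-1/t)f_t$ produces the announced shift in exponents.

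The main obstacle lies in the critical regime $\nu=1/2$ together with $\nu$ slightly above, where interior and transition-layer contributions to the $\dot H^\nu$ norm are of comparable order and Theorem~\ref{thm:self0} is not by itself sharp enough. To get the announced remainder one has to work directly with~\eqref{eq:explicit_solution_formula}, expanding $U$ to first subleading order around $s=\pi$ and performing a Littlewood--Paley decomposition at the scale $|\xi|\asymp 1$ to separate the low-frequency contribution of $f_t$ from the high-frequency contribution of the smooth cut-off at the cone. A second technical point is ensuring that the cross term in $\|f_t+R\|_{\dot H^\nu}^2=\|f_t\|_{\dot H^\nu}^2+2\langle f_t,R\rangle_{\dot H^\nu}+\|R\|_{\dot H^\nu}^2$ only produces the claimed error $O(t^{1/2-\nu})$ and not a larger one, which requires sharp frequency-localised control on~$R$.
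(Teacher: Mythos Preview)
Your overall strategy---compare $u$ to the model $f_t=(\sqrt 2/t)\mathbf 1_{\{|x|\le t\}}$ and compute the Fourier transform of $f_t$ by hand---is close in spirit to what the paper does, and for the Lebesgue asymptotics~\eqref{eq:asymptoticLp}--\eqref{eq:boundLp} it is essentially the same argument. But for the Sobolev part there is a genuine gap.

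The step ``$\|R\|_{\dot H^\nu}=O(1)$ by interpolating the $L^2$ bound against a higher-order $\dot H^s$ bound coming from the smoothness of~\eqref{eq:explicit_solution_formula}'' does not work. The residue $R=u-f_t$ inherits the jump of $f_t$ across the sphere $|x|=t$ (of height $\sqrt 2/t$ on a surface of area $\sim t^2$), so $R\notin \dot H^s$ for any $s\ge 1/2$; there is no higher $\dot H^s$ endpoint to interpolate against. The smoothness you invoke is a property of $u$, not of $R$. The paper avoids this by splitting in frequency \emph{before} decomposing: at high frequencies $\rho\ge 1$ one bounds $\int_1^\infty|\hat u|^2\rho^{2+2\nu}d\rho$ directly by $\|u(t)\|_{\dot H^1}^2=O(1)$ (from energy conservation, see~\eqref{eq:T142}); only at low frequencies $\rho\le 1$ does one split $u=u_<+u_>$ with $u_<=u\mathbf 1_{\{r\le t-1\}}$, and there the piece $u_>$ needs merely $L^2$ control (which is~\eqref{eq:boundLp} with $p=2$), so its jump is irrelevant. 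The explicit Fourier computation is then carried out on $u_<$, not on $f_t$, and the $(\sin\sigma-\sigma\cos\sigma)$ integrand appears with an additive $O(1/\rho)$ error rather than a multiplicative one. This same frequency split handles the critical case $\nu=1/2$ directly---one just computes $\int_0^{t-1}(\sin\sigma-\sigma\cos\sigma)^2\sigma^{-3}d\sigma=\tfrac12\log t+O(1)$---so the Littlewood--Paley machinery you propose is unnecessary.

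A second, smaller gap concerns the lower bounds. For the ``$\gtrsim$'' in~\eqref{eq:asymptoticdtLp} and in~\eqref{eq:asymptoticHs'} you assert a ``non-trivial, $t$-independent profile across the shell $\{|t-|x||\asymp 1\}$'', but nothing in Theorems~\ref{thm:self0} or~\ref{thm:self0'} gives a pointwise lower bound on $|\partial_t u|$ there. The paper obtains this by a dominated-convergence argument showing $r\partial_t u(t,t+\eta)\to g'(\eta)$ for an explicit $g$ built from $U$, and then checking that $g'\not\equiv 0$ on any interval (otherwise $U$ would be one of the singular solutions $\pm\sqrt 2/\sin s$, contradicting $U(0)$ finite). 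Without that nondegeneracy argument, the lower bounds remain unproved.
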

\begin{remark}
Let $\nu\in [0,3/2)$, and $p=\frac{6}{3-2\nu}$ the unique Lebesgue exponent such that $ \dot{H}^{\nu}\subset L^p$. Then, as $t\to\infty$, $\|u(t)\|_{\dot{H}^{\nu}}$ and $\|u(t)\|_{L^p}$ have the same (diverging) behaviour at infinity for $\nu\in [0,1/2)$. This breaks down at $\nu=1/2$: $\|u(t)\|_{L^3}$ is bounded, whereas the corresponding Sobolev norm $\|u(t)\|_{\dot{H}^{1/2}}$ goes to infinity. For $\nu\in (1/2,3/2)$, the corresponding $L^p$ norm goes to $0$ polynomially (a typical dispersive behaviour), which is not the case of the $\dot{H}^{\nu}$ norm of $u$.  
\end{remark}

We recall here that by \cite{DodsonLawrie15}, if $u$ is a radial solution of \eqref{NLW} such that $\boldsymbol{u}$ is bounded in $\mathcal{H}^{1/2}$, then it scatters to a linear solution. 
Theorem \ref{thm:self2} proves that this result is almost optimal, since a logarithmic growth at infinity of this norm is possible for a non-scattering solution, for which all the higher order norms are bounded.  We believe that solutions with an asymptotic self-similar behaviour in the wave cone will always exhibit these two features. We thus conjecture, in accordance with Conjecture \ref{conj:global}:
\begin{conj}
Let $u$ be a solution of \eqref{NLW}, defined for $t\in [0,\infty)$ with $(u_0,u_1)\in \mathcal{H}^{1/2}\cap \mathcal{H}^1$. Then for all $\nu\in (1/2,1]$,
$$\limsup_{t\to\infty}\|\boldsymbol{u}(t)\|_{\mathcal{H}^{\nu}}<\infty.$$
\end{conj}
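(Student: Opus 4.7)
The plan is to dichotomize according to the long-time behaviour of $u$.

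\textbf{Scattering case.} If $u$ scatters to a linear wave as $t\to\infty$, then the Strichartz norm $\|u\|_{L^4_{t,x}((0,\infty)\times\R^3)}$ is finite. A standard persistence-of-regularity argument then applies: using the Duhamel formula together with Strichartz estimates at the subcritical level $\mathcal H^\nu$, $\nu\in(1/2,1]$, and the hypothesis $(u_0,u_1)\in\mathcal H^1\subset\mathcal H^\nu$, one obtains $\sup_{t\ge 0}\|\boldsymbol u(t)\|_{\mathcal H^\nu}<\infty$.

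\textbf{Non-scattering case.} Here I would invoke Conjecture~\ref{conj:global}, which predicts the decomposition $\boldsymbol u(t)=\boldsymbol S(t)+\boldsymbol v_L(t)+\boldsymbol\varepsilon(t)$, with $S(t,x)=\tfrac 1t\varphi(x/t)$ self-similar, $v_L$ a free wave with data in $\mathcal H^\nu$ for every $\nu\in(1/2,1]$, and $\boldsymbol\varepsilon(t)\to 0$ in an appropriate sense. A direct Fourier-side change of variables yields
\[
\|S(t)\|_{\dot H^\nu}=t^{\frac12-\nu}\|\varphi\|_{\dot H^\nu},\qquad \|\partial_t S(t)\|_{\dot H^{\nu-1}}=t^{\frac12-\nu}\|\varphi+y\cdot\nabla\varphi\|_{\dot H^{\nu-1}},
\]
both of which tend to $0$ as $t\to\infty$ for $\nu>1/2$. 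The free wave contribution $\|\boldsymbol v_L(t)\|_{\mathcal H^\nu}$ is conserved in time, and the remainder $\boldsymbol\varepsilon(t)$ is controlled in $\mathcal H^\nu$ on $[T,\infty)$ by a Strichartz/bootstrap argument, $T$ large. Summing the three contributions gives the claimed bound.

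\textbf{Main obstacle.} The non-scattering step rests on the unproven Conjecture~\ref{conj:global}. Unconditionally, establishing it would require a Kenig--Merle-style concentration-compactness and rigidity analysis in the critical space $\mathcal H^{1/2}$, with the Dodson--Lawrie theorem (every radial solution bounded in $\mathcal H^{1/2}$ must scatter) providing the rigidity input: a non-scattering global radial solution cannot be precompact in $\mathcal H^{1/2}$, and one must then show that the unbounded part of $\boldsymbol u(t)$ is always absorbed by the self-similar profile $\pm\sqrt 2/t$ (or more generally by the profiles of \cite{BizonBreitenlohnerMaisonWasserman}). The threshold solutions $u_{X,\beta(X)}$ and $u_{X,-\beta(-X)}$, for which \eqref{eq:asymptoticHs'} already yields $\|\boldsymbol u(t)\|_{\mathcal H^\nu}\approx 1$ for $\nu>1/2$, provide the model on which this analysis should be patterned.
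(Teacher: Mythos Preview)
The statement you are attempting to prove is labelled \texttt{conj} in the paper: it is a \emph{conjecture}, and the paper provides no proof of it. So there is no ``paper's own proof'' to compare against, and any genuine proof would be a new result.

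Your proposal does not supply one. In the non-scattering case you explicitly invoke Conjecture~\ref{conj:global}, which is itself unproved; you acknowledge this as the main obstacle, and that is accurate. Deducing one conjecture from another of comparable strength is not a proof. Moreover, even granting Conjecture~\ref{conj:global}, the argument you sketch is incomplete: that conjecture gives control of $u-S$ only in $L^3$ on $\{|x|<t-\sqrt t\}$ and of the radiation term only in $\mathcal H^1$ on $\{|x|>t+A\}$; it says nothing about the transition region $t-\sqrt t\le |x|\le t+A$, and it does not assert that the remainder $\boldsymbol\varepsilon(t)$ is small in $\mathcal H^\nu$ for $\nu\in(1/2,1]$. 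Your claim that ``$\boldsymbol\varepsilon(t)$ is controlled in $\mathcal H^\nu$ by a Strichartz/bootstrap argument'' hides exactly the hard part, since without an a~priori bound in $\mathcal H^{1/2}$ (which is what is failing here) there is no small parameter on which to bootstrap.

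The scattering case is fine as stated: finite $L^4_{t,x}$ norm plus persistence of regularity does give boundedness in $\mathcal H^\nu$ for $\nu\in(1/2,1]$. But this is the easy half and is not what the conjecture is about.
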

\begin{conj}
There exists $\beta>0$ such that for any  solution $u$ of \eqref{NLW}, defined on the maximal time interval $t\in [0,t_+)$, with initial data in $\mathcal{H}^{1/2}$, if
$$ \sup_{2\le t<t_+}\|\boldsymbol{u}(t)\|_{\mathcal{H}^{1/2}}(\log t)^{-\beta}<\infty$$
then $u$ is global ($t_+=\infty$) and scatters to a linear solution.
\end{conj}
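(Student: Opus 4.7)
The plan is to adapt the Kenig--Merle concentration-compactness and rigidity scheme, which in the conformally invariant setting of~\eqref{NLW} was used by Dodson--Lawrie~\cite{DodsonLawrie15} under the stronger hypothesis $\sup_t\|\boldsymbol{u}(t)\|_{\mathcal{H}^{1/2}}<\infty$. The key calibration is that the threshold solutions of Theorem~\ref{thm:self2} satisfy $\|\boldsymbol{u}(t)\|_{\mathcal{H}^{1/2}}\approx \sqrt{\log t}$ and do not scatter, so the conjecture necessarily imposes $\beta<1/2$; the philosophy behind Conjecture~\ref{conj:global} suggests this bound is essentially sharp.

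First I would split the problem according to whether $t_+<\infty$ or $t_+=\infty$. In the finite-time case, the hypothesis reduces to a uniform bound $\|\boldsymbol{u}(t)\|_{\mathcal{H}^{1/2}}\leq M$ on $[2,t_+)$; a perturbation argument based on critical Strichartz theory should then yield a lifespan estimate contradicting maximality. The substantive case is $t_+=\infty$ together with the growth constraint $\|\boldsymbol{u}(t)\|_{\mathcal{H}^{1/2}}\lesssim(\log t)^{\beta}$, under which one must prove scattering.

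Arguing by contradiction, assume a non-scattering $u$ exists satisfying the hypothesis. Using the radial linear profile decomposition in $\mathcal{H}^{1/2}$ together with time translations, I would extract a minimal critical element $u_c$ whose trajectory is precompact in $\mathcal{H}^{1/2}$ modulo a scaling parameter $\lambda(t)$. The new difficulty, relative to the bounded setting, is that $\lambda(t)$ is not a priori bounded: the slow logarithmic growth of the norm should however confine it to a polynomial window $\lambda(t)\in[t^{-c\beta},t^{c\beta}]$ for some absolute $c$, via a quantitative Pythagorean decomposition. Once $u_c$ is in hand, the rigidity step uses a conformal Morawetz/virial identity cut off at scale $\sim t$, whose commutator error is controlled by the growth assumption; for $\beta$ small enough, the positive main term should dominate and force $u_c\equiv 0$.

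The main obstacle is the rigidity step. The explicit threshold solutions constructed here show that compact-modulo-scaling radial solutions with \emph{precisely} $\sqrt{\log t}$ growth do exist, so any general rigidity argument must be quantitatively tight enough to distinguish $(\log t)^\beta$ from $(\log t)^{1/2}$ with no slack. A promising route is to exploit the conformal inversion~\eqref{eq:conformal_inversion}, which translates the $t\to\infty$ asymptotics into a local problem at the origin: the logarithmic growth becomes a mild singularity in $\dot H^{1/2}$ amenable to perturbative analysis, and the threshold solutions correspond precisely to those passing through $\pm\sqrt{2}/t$, which one would hope to isolate as a codimension-one invariant manifold in the spirit of Conjecture~\ref{conj:manifold}. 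Absent such sharp control, a reasonable first milestone is a weak version with some small unspecified $\beta_0>0$, obtained by allowing ample slack in the Morawetz estimate.
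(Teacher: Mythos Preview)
The statement you are attempting to prove is a \emph{conjecture} in the paper, not a theorem. The paper does not provide any proof; immediately after stating it, the authors remark only that it is related to the Merle--Rapha\"el logarithmic lower bound for intercritical NLS and that ``we do not know of any analog of \cite{MeRa08} for the wave equation.'' There is therefore no paper proof to compare your proposal against.

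What you have written is not a proof but a research outline, and you acknowledge this yourself: the rigidity step is identified as the main obstacle, and you retreat at the end to a ``reasonable first milestone'' with unspecified small $\beta_0$. Several points in the sketch are genuinely problematic. First, the conjecture is not restricted to radial data, yet your profile decomposition and the Dodson--Lawrie input are radial; even the finite-time case is open without symmetry. Second, the claim that slow logarithmic growth of the critical norm confines the scaling parameter $\lambda(t)$ to a polynomial window via a ``quantitative Pythagorean decomposition'' is not substantiated and is not standard. Third, and most seriously, the paper's own threshold solutions are radial, global, non-scattering, and satisfy $\|\boldsymbol{u}(t)\|_{\mathcal{H}^{1/2}}\sim(\log t)^{1/2}$; any rigidity argument must therefore be sharp enough to separate exponents $\beta<1/2$ from $\beta=1/2$, and nothing in your Morawetz-with-logarithmic-error sketch indicates how to achieve that precision. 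The conformal-inversion idea is suggestive but equally heuristic. In short, the proposal is a plausible strategy for attacking an open problem, not a proof, and the paper makes no claim to have resolved it.
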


This conjecture is related to the work \cite{MeRa08}, where a logarithmic lower bound of the critical norm was obtained for finite time blow-up for the intercritical non-linear Schr\"odinger equation. We do not know of any analog of \cite{MeRa08} for the wave equation \eqref{gNLW}. We refer to \cite{KiStVi14} for some estimates on blow-up for wave equations.

\subsection{Blow-up solutions}
We finally discuss the solutions $u_{X,Y}$ in the (Blow-up) case of
Theorem~\ref{thm:main}. 
\begin{theo}[Asymptotics of Lebesgue and Sobolev norms for blow-up]\label{thm:blowup}
Consider $(X, Y)\in \mathbb R^2$ such that either $Y>\beta(X)$ or $Y<-\beta(-X)$, so that $t_+(X, Y)<\infty$ by Theorem~\ref{thm:main}. Then, as $t\nearrow t_+=t_+(X, Y)$, we have the asymptotics
\begin{equation}\label{eq:blowup_Lthree}
    \displaystyle \lVert u(t)\rVert_{L^3(\mathbb R^3)}= \frac{C_0^\frac13}{(t_+-t)^\frac{1}{2}}+O(1);
\end{equation}
\begin{equation}\label{eq:blowup_sobolev}
    \lVert u(t)\rVert_{\dot{H}^{1/2}} \approx \frac{1}{(t_+-t)^\frac{1}{2}};
\end{equation}
\begin{equation}\label{eq:blowup_ut_Lthreetwo}
   \lVert \partial_t u(t)\rVert_{L^\frac{3}{2}(\mathbb R^3)}= \frac{2^{-\frac12}C_0^\frac23}{t_+-t}+O(1);
\end{equation}
\begin{equation}\label{eq:blowup_ut_sobolev}
    \lVert \partial_t u(t)\rVert_{\dot{H}^{-1/2}(\mathbb R^3)}= O\left(\frac{1}{t_+-t}\right).
\end{equation}
The constant $C_0$ depends on the blow-up time $t_+$ only, and is given by
\begin{equation}\label{eq:C_Zero_Constant}
C_0=2^\frac72 \pi \int_0^\infty\!\! \left(1+\frac{\rho^2t_+}{1+t_+^2}\right)^{-3}\rho^2\, d\rho.
\end{equation}
\end{theo}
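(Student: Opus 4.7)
The strategy is to decompose $u = u_0 + u_1$, where $u_0$ is the model blow-up solution from Remark~\ref{rem:completely_explicit},
\begin{equation}
  u_0(t,r) = \frac{2\sqrt 2}{a(1+r^2-t^2) - 2bt},\qquad a = \sin T_+,\ b = \cos T_+,
\end{equation}
corresponding to the exact ODE solution $U_0(s) = \sqrt 2/\sin(T_+-s)$, and $u_1(t,r) = \Omega(t,r)\,\psi(S(t,r))$ with $\psi = U - U_0$. Since $u_0$ blows up at the same time $t_+ = \tan(T_+/2)$ as $u$, the correction $u_1$ should be uniformly bounded in all relevant norms; triangle inequalities plus explicit integration for $u_0$ will then yield~\eqref{eq:blowup_Lthree}--\eqref{eq:blowup_ut_sobolev}.

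The first step is the pointwise bound $\lvert\psi(s)\rvert + (T_+-s)\lvert\dot\psi(s)\rvert \leq C(T_+-s)^3$ on $[0, T_+)$. Setting $W = 1/U$ transforms $\ddot U + U = U^3$ into $W\ddot W - 2\dot W^2 = W^2 - 1$, whose Taylor expansion at $s = T_+$ is $W(s) = \sigma/\sqrt 2 - \sigma^3/(6\sqrt 2) + O(\sigma^5)$ (with $\sigma = T_+-s$), the first free parameter appearing only at order $\sigma^5$. Equivalently, $\psi$ satisfies the linearised equation $\ddot\psi = (U^2 + UU_0 + U_0^2 - 1)\psi$, whose coefficient behaves like $6/\sigma^2$ near $T_+$; the corresponding indicial roots are $3$ and $-2$, and the $-2$ root is excluded because $U$ and $U_0$ share the leading Laurent coefficient $\sqrt 2/\sigma$ at blow-up, leaving $\psi = O(\sigma^3)$.

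For the norms of $u_0$, the identity $at_+ + b = 1$ (a direct consequence of $t_+ = \tan(T_+/2)$) yields $a(1+r^2-t^2) - 2bt = ar^2 + 2\tau - a\tau^2$ with $\tau = t_+-t$. The substitution $r = \rho\sqrt{2\tau/a}$ then reduces
\begin{equation}
  \lVert u_0(t)\rVert_{L^3}^3 = 64\sqrt 2\,\pi\int_0^\infty \frac{r^2\, dr}{(ar^2 + 2\tau - a\tau^2)^3}
\end{equation}
to a universal integral whose value identifies, after $a = 2t_+/(1+t_+^2)$ is substituted back, the constant $C_0$ of~\eqref{eq:C_Zero_Constant}; the correction from the $-a\tau^2$ term is $O(\tau^{-1/2})$, giving $\lVert u_0(t)\rVert_{L^3} = C_0^{1/3}/\sqrt\tau + O(\sqrt\tau)$. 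An identical computation for $\partial_t u_0 = 4\sqrt 2(at+b)/[a(1+r^2-t^2) - 2bt]^2$, again exploiting $at_++b = 1$, yields $\lVert\partial_t u_0(t)\rVert_{L^{3/2}} = 2^{-1/2}C_0^{2/3}/\tau + O(1)$. The same substitution produces the coarser bounds $\lVert u_0\rVert_{L^2}\lesssim \tau^{-1/4}$ and $\lVert u_0\rVert_{\dot{H}^1}\lesssim \tau^{-3/4}$, matching the self-similar scaling $u_0(t,r)\sim \tau^{-1}f(r/\sqrt\tau)$ of the near-blow-up profile.

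The corrections are controlled by splitting the integrals at $r = \varepsilon$. On $r > \varepsilon$, $T_+-S(t,r)$ remains bounded below, so $\psi(S)$ and $\dot\psi(S)$ are uniformly bounded, while $\Omega$ decays at spatial infinity. On $r \leq \varepsilon$, the expansion $T_+ - S(t,r) = O(\tau + r^2)$ combined with the decay of $\psi, \dot\psi$ gives $\lvert u_1\rvert + (\tau + r^2)\lvert\partial_t u_1\rvert \leq C(\tau+r^2)^3$, which is small. Altogether $\lVert u_1\rVert_{L^p} + \lVert u_1\rVert_{\dot{H}^s} + \lVert\partial_t u_1\rVert_{L^q} \leq C$ uniformly in $\tau$ for $p \in \{2, 3\}$, $s\in\{0, 1\}$, $q = 3/2$. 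The bounds~\eqref{eq:blowup_Lthree} and~\eqref{eq:blowup_ut_Lthreetwo} now follow from the triangle inequality in $L^3$ and $L^{3/2}$. The lower bound in~\eqref{eq:blowup_sobolev} follows from the embedding $\dot{H}^{1/2}(\mathbb R^3)\hookrightarrow L^3(\mathbb R^3)$ combined with~\eqref{eq:blowup_Lthree}, the upper bound from the interpolation $\lVert u\rVert_{\dot{H}^{1/2}}^2 \leq \lVert u\rVert_{L^2}\lVert u\rVert_{\dot{H}^1}$ and the estimates above, and~\eqref{eq:blowup_ut_sobolev} from the dual embedding $L^{3/2}(\mathbb R^3)\hookrightarrow \dot{H}^{-1/2}(\mathbb R^3)$ applied to~\eqref{eq:blowup_ut_Lthreetwo}. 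The main obstacle is the first step: the full $(T_+-s)^3$ decay of $\psi$, much sharper than the naive $O(T_+-s)$ one would get from merely matching the blow-up time $T_+$, is precisely what prevents $u_1$ from perturbing the sharp leading constants in~\eqref{eq:blowup_Lthree} and~\eqref{eq:blowup_ut_Lthreetwo}.
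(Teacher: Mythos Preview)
Your proposal is correct and reaches the same conclusions as the paper, but the route is organized differently. The paper works directly with the pointwise asymptotic $|\Omega U|^3 = 2\sqrt 2\bigl(t_+-t + \tfrac{t_+}{1+t_+^2}r^2\bigr)^{-3}+O((t_+-t+r^2)^{-2})$, obtained from Lemma~\ref{lem:asympt_duffing} and a Taylor expansion of $T_+-s$ in $(t-t_+,r)$, then splits the integral at $r=\varepsilon$ and applies the same substitution $r=\sqrt{t_+-t}\,\rho$; the Sobolev bounds \eqref{eq:blowup_sobolev} and \eqref{eq:blowup_ut_sobolev} are obtained exactly as you do (embedding $\dot H^{1/2}\hookrightarrow L^3$, interpolation $\|u\|_{\dot H^{1/2}}^2\le\|u\|_{L^2}\|\nabla u\|_{L^2}$, and $L^{3/2}\hookrightarrow\dot H^{-1/2}$). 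Your decomposition $u=u_0+u_1$ with the explicit attractor $u_0$ is more structural: it packages the singular part into a genuine solution of \eqref{NLW} with closed-form norms, and your algebra $at_++b=1$ giving the denominator $ar^2+2\tau-a\tau^2$ is a clean way to identify $C_0$ (indeed $C_0=2\pi^2/a^{3/2}$ with $a=\sin T_+=2t_+/(1+t_+^2)$, matching \eqref{eq:C_Zero_Constant}). This is essentially a quantitative version of Proposition~\ref{prop:blow_up_attractor}.

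One correction to your self-assessment: the $(T_+-s)^3$ decay of $\psi=U-U_0$ is true (your indicial-roots argument with exponents $3,-2$ is right), but it is not ``precisely what prevents $u_1$ from perturbing the sharp leading constants.'' The cruder bound $\psi=O(T_+-s)$, $\dot\psi=O(1)$---which follows already from Lemma~\ref{lem:asympt_duffing} applied to both $U$ and $U_0$---gives $|u_1|\le C(\tau+r^2)$ and $|\partial_t u_1|\le C$ near the blow-up point, and this is already enough to make $u_1$ bounded in $L^2,L^3,\dot H^1$ and $\partial_t u_1$ bounded in $L^{3/2}$. So the sharp Frobenius expansion is a nice observation but not the crux; the paper's proof manages with the first-order bound only.
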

We conjecture that our blow-up solutions are stable in the critical space $\mathcal{H}^{1/2}$. That is, if $v$ is a solution of \eqref{NLW} with $\|\boldsymbol{u}_{X,Y}(0)-\boldsymbol{v}(0\|_{\mathcal{H}^{1/2}}$ sufficiently small, then we conjecture that $v$ blows-up in finite time with the same type of blow-up as $u_{X,Y}$. See \cite{DonningerSchorkhuber12, DonSch17} for results of this type. Using only elementary comparison arguments we can prove the following weaker result. We denote by $H^s=H^s(\mathbb R^3)$ the usual inhomogeneous $L^2$-Sobolev space. 
\begin{theo}[Stability of blow-up] \label{thm:blowup_stable}
 Let $u=u_{X,Y}$ be one of the \emph{(Blow-up)} solutions of Theorem~\ref{thm:main} and let $t_+=t_+(X,Y)$ be its finite maximal time of existence. Let $\eta>0$. Then there exists $\varepsilon=\varepsilon(u, \eta)>0$ such that for all radial solution $v$ of \eqref{NLW} with $\left\|\boldsymbol{u}(0)-\boldsymbol{v}(0)\right\|_{H^3\times H^2} <\varepsilon$, $v$ blows up in $H^3\times H^2$ at finite positive time $t_+'<t_++\eta$.  
\end{theo}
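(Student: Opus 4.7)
The plan is to combine standard subcritical well-posedness in $H^3\times H^2$ with an averaged, Kato--Glassey--Sideris style ODE comparison near the blow-up point of $u$.

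\emph{Step 1 (continuous dependence up to $t_+$).} Since $H^3(\mathbb R^3)\hookrightarrow C^1_b(\mathbb R^3)$, equation~\eqref{NLW} is locally well-posed in $H^3\times H^2$ with a locally Lipschitz flow, and $u=u_{X,Y}$ is smooth on $[0,t_+)\times \mathbb R^3$. Standard persistence of regularity and continuous dependence give, for every $t_0<t_+$ and every $\delta>0$, a threshold $\varepsilon_1=\varepsilon_1(u,t_0,\delta)>0$ such that $\|\boldsymbol v(0)-\boldsymbol u(0)\|_{H^3\times H^2}<\varepsilon_1$ forces $v$ to be defined on $[0,t_0]$ with $\|\boldsymbol v(t_0)-\boldsymbol u(t_0)\|_{H^3\times H^2}<\delta$. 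By Sobolev embedding this also yields smallness of $v(t_0)-u(t_0)$ in $C^1$ and of $\partial_tv(t_0)-\partial_tu(t_0)$ in $L^\infty$.

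\emph{Step 2 (blow-up rate at the origin).} By radial symmetry, the unique blow-up point of $u$ is the spatial origin. From~\eqref{eq:explicit_solution_formula} one has $u(t,0)=\tfrac{2}{1+t^2}\,U(2\arctan t)$, and in the (Blow-up) case $U$ reaches infinity at $s_+=2\arctan t_+$. A direct analysis of $\ddot U+U=U^3$ near $s_+$ (or a direct computation in the explicit family~\eqref{eq:u_a_b}) shows that for some $\sigma\in\{\pm 1\}$ and $C_*,C_*'>0$,
\begin{equation*}
\sigma\, u(t,0)\simeq \frac{C_*}{t_+-t},\qquad \sigma\,\partial_tu(t,0)\simeq \frac{C_*'}{(t_+-t)^2},\qquad \text{as }t\nearrow t_+,
\end{equation*}
and that the concentration scale of $u(t_+-\tau,\cdot)$ around the origin is of order $\sqrt\tau$. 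Fix a small $\tau>0$ and set $t_0=t_+-\tau$; Step~1 with $\delta$ small enough then forces $\sigma v$ and $\sigma\partial_tv$ to inherit these pointwise bounds at $t_0$ up to lower-order corrections.

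\emph{Step 3 (ODE comparison forcing blow-up of $v$).} By finite speed of propagation, the restriction of $v$ to the backward cone with vertex $(t_0+R,0)$ depends only on the restriction of $\boldsymbol v(t_0)$ to $B_R$. Choose the scale $R\simeq\sqrt\tau$ matching the concentration of $u(t_0)$, fix a nonnegative radial bump $\phi\in C_c^\infty(B_1)$, and set $F(t):=\sigma\int v(t,x)\,\phi(x/R)\,dx$. Differentiating twice using~\eqref{NLW}, integrating by parts on the Laplacian term, and applying Jensen's inequality to the cubic nonlinearity yields, on $[t_0,t_0+R/2]$,
\begin{equation*}
F''(t)\ge c\,R^{-6}F(t)^3 - C\,R^{-2}F(t),
\end{equation*}
with $F(t_0)\gtrsim\sqrt\tau$ and $F'(t_0)\gtrsim 1/\sqrt\tau$, both positive. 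A standard phase-plane analysis of the rescaled ODE $\ddot F=F^3$ then shows that $F$ leaves every finite bound in time $O(\tau)$. Since $|F(t)|\le C\|v(t)\|_{L^\infty}\le C\|v(t)\|_{H^3}$, this forces $v$ to leave $H^3\times H^2$ at some $t_+'\in(t_0,t_0+c\tau)$. Choosing first $\tau$ small enough (in terms of $\eta$) and then $\varepsilon=\varepsilon_1(u,t_0,\delta(\tau))$ yields the conclusion.

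The main obstacle is Step~3: the cubic lower bound $cR^{-6}F^3$ must dominate the linear correction $CR^{-2}F$ throughout the time window $[t_0,t_0+c\tau]$, which dictates the joint choice $R\simeq\sqrt\tau$; the shrinking cone must also not collapse onto $(t_0+c\tau,0)$ before the ODE blow-up occurs, which is compatible with this scaling. The other ingredients---subcritical continuous dependence in $H^3\times H^2$ and the ODE blow-up rate of $U$ at $s_+$---are routine consequences of the explicit conformal reduction~\eqref{eq:explicit_solution_formula}.
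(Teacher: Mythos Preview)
Your Step~1 and the ODE asymptotics in Step~2 are fine, but Step~3 has a real gap. The inequality
\[
\sigma\int v^3\,\phi\!\left(\tfrac{\cdot}{R}\right)\;\ge\;c\,R^{-6}F(t)^3
\]
that you obtain ``by Jensen'' (equivalently, by H\"older with weight $\phi$) is only valid if $\sigma v(t,\cdot)\ge 0$ on $\operatorname{supp}\phi(\cdot/R)$; the map $x\mapsto x^3$ is not convex on $\mathbb R$, and without a sign condition $\int v^3\phi$ is not controlled from below by $\big(\int v\phi\big)^3$. You establish $\sigma v>0$ only at $t=t_0$, via continuous dependence; for $t>t_0$ nothing in your argument prevents $\sigma v$ from becoming negative on part of $B_R$ (indeed, for $|x|$ close to $R$ the values of $v(t,x)$ depend on data outside $B_R$, which you do not control pointwise). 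The same issue affects the linear term: for a generic bump $\phi$, $\Delta\phi$ changes sign, so $\big|\int v\,\Delta(\phi(\cdot/R))\big|\le CR^{-2}|F|$ is not true without a sign condition on $v$ (using the first Dirichlet eigenfunction fixes the linear term, but not the cubic one). As written, the differential inequality for $F$ is therefore unjustified on the whole interval $[t_0,t_0+R/2]$, and the blow-up conclusion does not follow.

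The paper sidesteps this by exploiting the explicit family itself. It builds a \emph{barrier} $u_\delta(t,r)=\Omega(t,r)\,U(s-\delta)$, which is another solution of \eqref{NLW} blowing up at $t_+(\delta)\in(t_+,t_++\eta)$, and shows that at some $t_1<t_+$ one has $u>u_\delta$ and $(\partial_t+\partial_r)(ru)>(\partial_t+\partial_r)(ru_\delta)$ on the relevant segment. Continuous dependence transfers these strict inequalities to $v$. Then the pointwise comparison principle for radial $3$D waves (via the $1$D reduction $w=rv$, $w_{tt}-w_{rr}=w^3/r^2$ with monotone nonlinearity) gives $v\ge u_\delta$ throughout the truncated cone, forcing $\|v(t)\|_{L^\infty}\to\infty$ as $t\nearrow t_+(\delta)$. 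The key difference is that this argument produces and propagates a \emph{pointwise} lower bound, which is exactly the missing ingredient in your averaged approach. If you want to rescue Step~3, you would first need to prove such a pointwise positivity (e.g.\ $\sigma v\ge 0$ on the shrinking cone), and at that point you are essentially doing the paper's comparison argument anyway.
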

We will also study the behaviour of our blow-up solutions at points of the blow-up surface. This kind of study of the \emph{blow-up mechanism} for nonlinear wave equations has been initiated by Merle and Zaag in a series of papers (see~\cite{MeZa15} and references therein). We will first of all establish the maximal domain of definition of the blow-up solutions, observing that all points on the boundary of such domain are \emph{non-characteristic} (see the forthcoming Remark~\ref{rem:characteristic_points}). Then, in the subsection~\ref{sec:blowup_profile}, we will study our solutions in self-similar coordinates. We will find that they have the same profile as the one predicted by Merle--Zaag in~\cite[p.~3]{MeZa15}.



We conclude with the following proposition, showing that all our blow-up solutions do converge to an attractor in accordance with Bizo\'n--Zengino\v{g}lu~\cite[Conjecture~3]{BizZen09}.
\begin{prop}[Convergence to an attractor]\label{prop:blow_up_attractor}
    Let $u=u_{X,Y}$ be one of the \emph{(Blow-up)} solutions of Theorem~\ref{thm:main} and let $t_+=t_+(X,Y)$ be its finite maximal time of existence. Then letting $T_+=2\arctan(t_+)$ we have as $t\nearrow t_+$ 
    \begin{equation}\label{eq:blow_up_attractor}
        u(t, r)=\frac{2\sqrt 2}{a(1+r^2-t^2)-2bt}+ O(t_+-t)
    \end{equation}
    for $a=\sin T_+, b=\cos T_+$.
\end{prop}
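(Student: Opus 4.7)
The strategy is to combine the explicit formula~\eqref{eq:explicit_solution_formula}, which expresses $u=u_{X,Y}$ in terms of the ODE solution $U=U_{X,Y}$ of $\ddot{U}+U=U^3$, with a sharp asymptotic expansion of $U$ near its blow-up time $T_+=2\arctan t_+\in(0,\pi)$. The natural reference is the completely explicit blow-up profile $V(s)=\sqrt{2}/\sin(T_+-s)$ of Remark~\ref{rem:completely_explicit}, and the proposition will follow once we establish the ODE asymptotic $U(s)-V(s)=O((T_+-s)^{3})$ as $s\to T_+$ and translate it back to the PDE through~\eqref{eq:explicit_solution_formula}.

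A preliminary algebraic identity shows that the right-hand side of~\eqref{eq:blow_up_attractor} is exactly $\Omega(t,r)\,V(s(t,r))$. Using $\sin(\arctan x)=x/\sqrt{1+x^2}$ and $\cos(\arctan x)=1/\sqrt{1+x^2}$, one computes
\begin{equation*}
\sin s(t,r)=\frac{2t}{\sqrt{1+(t+r)^2}\sqrt{1+(t-r)^2}},\qquad\cos s(t,r)=\frac{1-t^2+r^2}{\sqrt{1+(t+r)^2}\sqrt{1+(t-r)^2}},
\end{equation*}
and expanding $\sin(T_+-s)=a\cos s-b\sin s$ gives $\Omega(t,r)\,V(s(t,r))=2\sqrt{2}/(a(1+r^2-t^2)-2bt)$; this also accounts for formula~\eqref{eq:u_a_b}.

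The main step is the ODE asymptotic. Assume $U\to+\infty$ for definiteness; the opposite sign is handled analogously and only affects the overall sign of the attractor. The desingularizing substitution $f=1/U$, combined with $\ddot{U}=U^3-U$ and the energy conservation $\dot{U}^2=\tfrac{1}{2}U^4-U^2+2E_U$, transforms the singular blow-up problem into the \emph{regular} Duffing-type Cauchy problem
\begin{equation*}
\ddot{f}+f=4E_U f^3,\qquad f(T_+)=0,\qquad\dot{f}(T_+)=-\tfrac{1}{\sqrt{2}},
\end{equation*}
where $E_U$ is the conserved energy of $U$. The reference $V$ corresponds to $f_V=\sin(T_+-s)/\sqrt{2}$, which satisfies the \emph{same} Cauchy problem with $E_U$ replaced by $0$; indeed, a direct computation yields $E_V=0$. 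A direct inspection shows that the Taylor expansions of $f$ and $f_V$ at $T_+$ coincide through order $4$, so a Duhamel representation of $h=f-f_V$ against the cubic source $4E_U f^3$ yields $h(s)=O((T_+-s)^5)$. Inverting via $U-V=(f_V-f)/(f\,f_V)$ and using $f\,f_V\sim(T_+-s)^2/2$ produces the asserted $U(s)-V(s)=O((T_+-s)^3)$.

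It remains to combine the two ingredients:
\begin{equation*}
u(t,r)-\frac{2\sqrt{2}}{a(1+r^2-t^2)-2bt}=\Omega(t,r)\bigl[U(s(t,r))-V(s(t,r))\bigr]=O\bigl((T_+-s(t,r))^3\bigr),
\end{equation*}
since $\Omega$ stays bounded near $(t_+,0)$. A Taylor expansion of $s$ at $(t_+,0)$ gives $T_+-s(t,r)=\tfrac{2(t_+-t)}{1+t_+^2}+\tfrac{2t_+r^2}{(1+t_+^2)^2}+\cdots$, so on the natural scale of the blow-up profile the error is $O((t_+-t)^3)$, hence \emph{a fortiori} $O(t_+-t)$ as claimed. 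The main obstacle is the ODE asymptotic; the crucial insight is the substitution $f=1/U$, which replaces the singular blow-up problem by a smooth Cauchy problem amenable to elementary Taylor analysis.
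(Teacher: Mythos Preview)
Your proof is correct and follows the same overall architecture as the paper's: identify the attractor as $\Omega(t,r)V(s)$ with $V(s)=\sqrt{2}/\sin(T_+-s)$, establish an ODE asymptotic $U-V=O((T_+-s)^k)$, and translate back to $(t,r)$ via the explicit representation. The difference lies in the ODE step. The paper simply invokes Lemma~\ref{lem:asympt_duffing}, which gives $U(s)=\sqrt{2}/(T_+-s)+O(T_+-s)$; combined with $1/\sin(T_+-s)=1/(T_+-s)+O(T_+-s)$ this yields $U-V=O(T_+-s)$, already sufficient for the stated $O(t_+-t)$ remainder. You instead introduce the desingularizing substitution $f=1/U$, which converts the singular blow-up into the \emph{regular} Cauchy problem $\ddot f+f=4E_U f^3$ with $f(T_+)=0$, $\dot f(T_+)=-1/\sqrt{2}$; comparing with $f_V=\sin(T_+-s)/\sqrt{2}$ (same initial data, $E_V=0$) via Duhamel produces the sharper $U-V=O((T_+-s)^3)$. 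Your route is more work than the proposition requires, but it buys a genuinely stronger remainder $O((t_+-t)^3)$ on the parabolic scale $r^2\lesssim t_+-t$, and the $f=1/U$ trick makes transparent why the leading blow-up profile is universal, independent of the energy $E_U$.
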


The paper is organized as follows. In Section~\ref{sec:ODE} we will reduce the cubic wave equation~\eqref{NLW} with the specific initial data~\eqref{eq:uAB_initial} to an ODE which we will study in detail. In Section~\ref{sec:threshold} we will prove Theorems~\ref{thm:self0}--\ref{thm:self2}, which contain all our estimates on the threshold solutions. Finally, in Section~\ref{sec:blowup_profile} we will prove Theorem~\ref{thm:blowup}, Theorem~\ref{thm:blowup_stable} and Proposition~\ref{prop:blow_up_attractor} concerning our blow-up solutions,  as well as discussing their blow-up mechanism in the sense of Merle and Zaag.

\section{Dynamics of the Duffing ordinary differential equation}\label{sec:ODE}
In this section we prove Theorem~\ref{thm:main}. The main ingredient is the analysis of the ODE initial value problem
\begin{equation}\label{eq:DuffingIVP}
    \begin{cases}
        \ddot{U}(s) + U(s) = U^3(s), \\
        U(0)=X, \dot{U}(0)=Y,
    \end{cases}
\end{equation}
known as \emph{undamped softening Duffing equation}. The relationship between~\eqref{eq:DuffingIVP} and the cubic wave equation~\eqref{NLW} is explained by the following lemma, which relies on the classical Penrose compactification of the Minkowski spacetime $\mathbb R^{1+3}$. Recall that the cotangent is defined as $\cot(s)~=~1/\tan(s)$, with $\cot(\pm\pi/2)=0$.
\begin{lemma}\label{lem:penrose}
    Suppose that $U=U(s)$ is a smooth solution to~\eqref{eq:DuffingIVP}, defined for $s\in(T_-, T_+)$, where $T_-\in[-\infty, 0)$ and $T_+\in(0, \infty]$. Then the function 
    \begin{equation}\label{eq:u_from_U}
        u(t, x)=\frac{2 U\big(\arctan(t+\lvert x \rvert)+\arctan(t-\lvert x\rvert)\big)}{\sqrt{1+(t-\lvert x\rvert)^2}\sqrt{1+(t+\lvert x \rvert)^2}}
    \end{equation}
    is defined for all $(t, x)\in\mathbb R^{1+3}$ such that 
    \begin{equation}\label{eq:t_r_range}
        M_-(T_-, \lvert x\rvert) < t < M_+(T_+, \lvert x \rvert),
    \end{equation}
    where 
    \begin{equation}\label{eq:M_functions}
        M_\pm(T, r):=
        \begin{cases}
            -\cot(T)\pm\sqrt{1+r^2+\cot^2(T)}, &\lvert T\rvert <\pi, \\
            \pm \infty, & \lvert T\rvert\ge \pi;
        \end{cases}
    \end{equation}
    (see Figure~\ref{fig:hyperboloids}). Also, $u$ is smooth and it satisfies 
    \begin{equation}\label{eq:NLW_recall}
        \begin{cases}
            \partial_t^2 u(t, x)-\Delta u(t, x)=u^3(t, x), \\
            u(0, x)=\frac{2X}{1+\lvert x\rvert^2},\ \partial_t u(0, x)=\frac{4Y}{(1+\lvert x \rvert^2)^2} 
        \end{cases}
    \end{equation}
    at all points of its domain of definition.
\end{lemma}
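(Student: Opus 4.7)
The plan is to treat this lemma as a verification based on the Penrose/conformal compactification of Minkowski space. I introduce null coordinates $\eta=t+r$, $\xi=t-r$ (with $r=\lvert x\rvert$) and the compactified angles $\sigma_1=\arctan\eta$, $\sigma_2=\arctan\xi$, so that $\sigma_j\in(-\pi/2,\pi/2)$. The argument of $U$ in~\eqref{eq:u_from_U} then becomes $s=\sigma_1+\sigma_2\in(-\pi,\pi)$, and the prefactor simplifies as $2/\sqrt{(1+\eta^2)(1+\xi^2)}=2\cos\sigma_1\cos\sigma_2$. Hence~\eqref{eq:u_from_U} reads $u=2\cos\sigma_1\cos\sigma_2\,U(s)$, which is smooth on any open set where $s$ lies in the domain of $U$.

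To identify the domain, I would observe that $\partial_t s=\cos^2\sigma_1+\cos^2\sigma_2>0$, so $t\mapsto s(t,r)$ is a strictly increasing diffeomorphism from $\mathbb R$ onto $(-\pi,\pi)$ for each fixed $r\ge 0$. The addition formula gives $\tan s=2t/(1+r^2-t^2)$, so inverting for $T\in(-\pi,\pi)\setminus\{0\}$ amounts to solving the quadratic $\tan(T)t^2+2t-\tan(T)(1+r^2)=0$, whose roots are $-\cot T\pm\sqrt{\cot^2 T+1+r^2}$. Selecting the branches so that $t=0$ at $T=0$ reproduces the functions $M_\pm$ of~\eqref{eq:M_functions}. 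When $\lvert T\rvert\ge\pi$ the condition $s\in(T_-,T_+)$ imposes no restriction on the corresponding side, consistent with $M_\pm=\pm\infty$. This identifies the domain~\eqref{eq:t_r_range}.

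The heart of the proof is verifying the PDE~\eqref{eq:NLW_recall}. Since $u$ is radial, $(\partial_t^2-\Delta)u=\frac{1}{r}(\partial_t^2-\partial_r^2)(ru)$. Setting $w:=ru$ and using $\partial_t^2-\partial_r^2=4\partial_\eta\partial_\xi$ together with $r=(\eta-\xi)/2$, the equation becomes $\partial_\eta\partial_\xi w=w^3/(\eta-\xi)^2$. The key algebraic identity
\begin{equation}
    \eta-\xi=\tan\sigma_1-\tan\sigma_2=\frac{\sin(\sigma_1-\sigma_2)}{\cos\sigma_1\cos\sigma_2}
\end{equation}
turns $w$ into the clean form $w=\sin(\sigma_1-\sigma_2)\,U(s)$. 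Using $\partial_\eta\sigma_1=\cos^2\sigma_1$, $\partial_\xi\sigma_2=\cos^2\sigma_2$, and $\partial_\xi\sigma_1=\partial_\eta\sigma_2=0$, a direct two-step differentiation yields
\begin{equation}
    \partial_\eta\partial_\xi w=\cos^2\sigma_1\cos^2\sigma_2\,\sin(\sigma_1-\sigma_2)\bigl[\ddot U(s)+U(s)\bigr].
\end{equation}
The Duffing equation~\eqref{eq:DuffingIVP} converts $\ddot U+U$ into $U^3$, and the same key identity shows that $w^3/(\eta-\xi)^2=\cos^2\sigma_1\cos^2\sigma_2\,\sin(\sigma_1-\sigma_2)\,U^3$, so the two expressions coincide and the PDE is established.

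It remains to check the initial data. At $t=0$ one has $\eta=r$, $\xi=-r$, hence $\sigma_1=-\sigma_2=\arctan r$, $s=0$, and $\cos\sigma_1=\cos\sigma_2=(1+r^2)^{-1/2}$; this gives $u(0,x)=2X/(1+r^2)$ at once. Writing $\partial_t=\partial_\eta+\partial_\xi$ and applying the chain rule to $u=2\cos\sigma_1\cos\sigma_2\,U(s)$, at $t=0$ the terms proportional to $U(0)=X$ cancel by the antisymmetry $\sigma_1=-\sigma_2$, while those proportional to $\dot U(0)=Y$ combine to produce $4Y/(1+r^2)^2$. The proof involves no deep analytic obstacle; the main task is the algebraic bookkeeping, whose pivotal observation is that $\eta-\xi=\sin(\sigma_1-\sigma_2)/(\cos\sigma_1\cos\sigma_2)$ makes the cubic nonlinearity $u^3$ compatible with the cubic term of the Duffing equation through the conformal factor $\Omega=2\cos\sigma_1\cos\sigma_2$.
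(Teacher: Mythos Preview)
Your proof is correct and takes a genuinely different route from the paper's. The paper invokes the Penrose conformal compactification: it cites the known fact that the cubic wave equation on $\mathbb{R}^{1+3}$ is conformally equivalent to $\partial_s^2 V-\Delta_{\mathbb S^3}V+V=V^3$ on $\mathbb{R}\times\mathbb S^3$ (with references to H\"ormander and \cite{NeDoSStTa23}), and then specializes to $V=U(s)$. You instead carry out a direct, self-contained verification in null coordinates, exploiting radiality via $(\partial_t^2-\Delta)u=\tfrac{1}{r}(\partial_t^2-\partial_r^2)(ru)$ and the clean factorization $ru=\sin(\sigma_1-\sigma_2)\,U(s)$. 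Your approach has the virtue of requiring no external input and making the cancellation mechanism transparent; the paper's approach is more conceptual and situates the lemma in the broader conformal geometry. For the domain, both arguments arrive at the same quadratic $1+r^2-t^2=2t\cot s$ (the paper via $\sin s=\Omega t$, $\cos s=\tfrac{\Omega}{2}(1+r^2-t^2)$; you via the tangent addition formula), so that part is essentially identical. One small point you may wish to make explicit: your PDE computation is literally valid only for $r>0$; the extension to $r=0$ follows because $u$, being a smooth even function of $r$, is smooth in $x$, and the equation then holds at the axis by continuity.
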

\begin{figure}
    \centering
    \begin{tikzpicture}
        \begin{axis}[axis on top, axis x line=middle, axis y line=middle, xmin=-0.5, xmax=4, ymin=-2.3, ymax=3.34, xlabel=$\scriptsize{r=\lvert x \rvert}$, ylabel=$\scriptsize{t}$, xtick=\empty, ytick=\empty]
            \addplot+[mark=none, domain=0:3, draw=none, name path=up]{0.05+sqrt(1+9+(0.05)^2)};
            \addplot+[mark=none, draw=blue, domain=0:3, name path=down] {0.05+sqrt(1+x^2+(0.05)^2)};
            \addplot[lightgray] fill between[of=up and down];

            \addplot+[mark=none, draw=none, domain=0:3, name path=upp]{1-sqrt(2+9)};
            \addplot+[mark=none, draw=blue,  name path=downp, domain=0:3] {1-sqrt(2+x^2)};
            \addplot[lightgray] fill between[of=upp and downp];
            \draw[blue] (2.49, 2.6) node[anchor=west]{$t=M_+(T_+, r)$};
            \draw[blue] (2.49, 1-2.6) node[anchor=west]{$t=M_-(T_-, r)$};
        \end{axis}
    \end{tikzpicture}
    \caption{If $\lvert T_\pm\rvert<\pi$, then $u=u(t, \lvert x \rvert)$ is defined in the unshaded region, between the two space-time hyperboloids.}
    \label{fig:hyperboloids}
\end{figure}
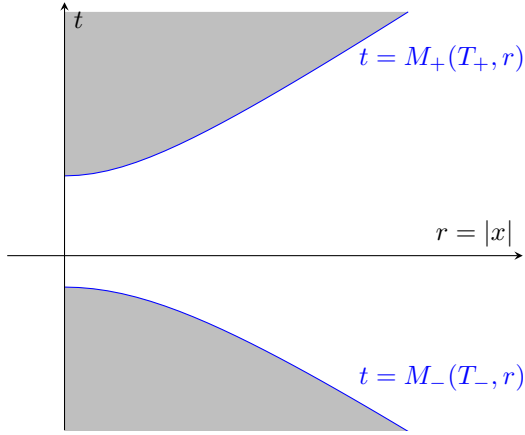
\begin{remark}\label{rem:characteristic_points}
    The domain~\eqref{eq:t_r_range} is called in the literature the \emph{maximal influence domain} of $u$; see, for example,~\cite{Alinhac95Bo}. In the blow-up case $\lvert T_\pm\rvert <\pi$, we see that $\lvert \partial_r M_{\pm}(T_\pm, r)\rvert<1$, for all $r>0$. Thus every blow-up point is non-characteristic, in the sense of Merle and Zaag~\cite{MeZa08}. 
\end{remark}
\begin{proof}
    We denote the generic point of $\mathbb S^3$ by the coordinates
    \begin{equation}\label{eq:generic_sphere}
        \begin{array}{cc}
            (\cos \theta, \omega\sin \theta ), &\text{where }\theta\in [0, \pi],\omega\in\mathbb S^2.
        \end{array}
    \end{equation}
    We recall that a function $V=V(s, \cos \theta , \omega \sin \theta)$ solves the cubic wave equation $\partial_s^2V -\Delta_{\mathbb S^3} V + V = V^3$ on an open subset of $\mathbb R\times \mathbb S^3$ if and only if the function $v$ defined by
    \begin{equation}\label{eq:Penrose_transform_functions}
        v(t, x)=\Omega(t, \lvert x\rvert)V(s, \cos\theta, \omega\sin \theta),
    \end{equation}
    where $s=\arctan(t+\lvert x\rvert) + \arctan(t-\lvert x \rvert), \theta=\arctan(t+\lvert x\rvert) - \arctan(t-\lvert x \rvert)$ and $\omega= x/\lvert x\rvert$, solves the partial differential equation in~\eqref{eq:NLW_recall} on the corresponding open subset of $\mathbb R^{1+3}$. This follows at once from the conformality of the map given by $(t, x)\mapsto (s, \cos \theta, \omega \sin \theta)$, where $(s, \cos \theta, \omega \sin \theta)$ depends on $(t, x)$ via the equations in~\eqref{eq:Penrose_transform_functions}\footnote{See~\cite[Appendix~A.4]{Hor97}. Alternatively, see~\cite[Lemma~A.1]{NeDoSStTa23} for a source that does not rely on tools from conformal geometry.}.
    
    In the special case $V(s, \cos\theta, \omega \sin\theta)=U(s)$, where $U$ is a solution to the ODE~\eqref{eq:DuffingIVP}, the formula~\eqref{eq:Penrose_transform_functions} reduces exactly to~\eqref{eq:u_from_U}. Thus, $u=u(t, x)$ defined by~\eqref{eq:u_from_U} solves the cubic wave equation~\eqref{eq:NLW_recall}, and the initial conditions follow from direct computation.
    
    It remains to determine the domain of this function $u$. We claim that the strip $s\in (T_-, T_+)$, which is the domain of $U=U(s)$, is mapped onto the region~\eqref{eq:t_r_range}. This claim follows from the formulas 
    \begin{equation}\label{eq:inverse_Penrose}
        \begin{array}{cc}
            \sin s= \Omega t, & \cos s= \Omega \frac{1+\lvert x\rvert^2-t^2}{2}, 
        \end{array}
    \end{equation}
    which are easily obtained by inverting~\eqref{eq:Penrose_transform_functions} (or see~\cite[pg.~277]{Hor97}). These formulas imply the quadratic equation $1+\lvert x \rvert^2 - t^2 = 2t \cot(s)$, which can be solved for $t$ to obtain the two functions $M_\pm$. This proves the claim and concludes the proof of the lemma.
\end{proof}

By standard ODE theory, for each $(X, Y)\in\mathbb R^2$ the unique solution $U=U_{X, Y}(s)$ to the initial value problem~\eqref{eq:DuffingIVP} is defined in a maximal interval $s\in (T_-(X, Y), T_{+}(X, Y))$, where $T_\pm$ are smooth functions on $\mathbb R^2$ which we will determine explicitly in the next subsection. The function $u=u_{X, Y}(t, x)$ corresponding to $U_{X,Y}$ via~\eqref{eq:u_from_U} exists for all $t>0$ (resp.~$t<0$) if and only if $T_+\ge \pi$ (resp. $T_-\le- \pi$). Otherwise, $u_{X, Y}$ blows up in finite future time at 
\begin{equation}\label{eq:blow_up_forward}
    \begin{array}{cc}
        x=0, & t\nearrow M_+(T_+, 0)=\sqrt{1+\cot^2(T_+)}-\cot(T_+)=\frac{1-\cos T_+}{\sin T_+}, 
    \end{array}
\end{equation}
(resp. in finite past time at $x=0, t\searrow  -\sqrt{1+\cot^2(T_-)}-\cot(T_-)$). We thus see that the cases $(i)$=(Blow-up), $(ii)$=(Scattering) and $(iii)$=(Threshold) of Theorem~\ref{thm:main} correspond to the level sets 
\begin{equation}\label{eq:level_sets_Tplus}
    \begin{array}{ccc}
        (i)\ T_+(X, Y)< \pi, & (ii)\ T_+(X, Y)>\pi, & (iii)\ T_+(X, Y)=\pi. 
    \end{array}
\end{equation}
which we will determine explicitly in subsection~\ref{sec:subsection_beta}. We will use the following ODE energy conservation:
\begin{equation}\label{eq:ODEenergy_recall}
    E_{X, Y}=\frac{Y^2}{2}+\frac{X^2}{2}-\frac{X^4}{4}=\frac{\dot{U}_{X, Y}(s)^2}{2}+\frac{U_{X,Y}(s)^2}{2}-\frac{U_{X,Y}(s)^4}{4}. 
\end{equation}
We pause for a moment to record the following identities, which relate precisely the energy and norm of $\boldsymbol{u}_{X, Y}$ and the corresponding $U_{X, Y}$. These will not be needed in the following but are interesting on their own.
\begin{prop}\label{prop:energy_penrose}
    The following relations hold:
    \begin{equation}\label{eq:energy_norm_penrose_physical}
        \begin{split}
            E_{X, Y}&=\frac{1}{\lvert\mathbb S^3\rvert} \int_{\mathbb R^3}  \frac{ (\partial_t u_{X, Y}(0, x))^2}{2} + \frac{\lvert \nabla u_{X, Y}(0, x)\rvert^2}{2} -\frac{(u_{X, Y}(0, x))^4}{4}\, dx, \\
            X^2 + Y^2 &= \frac{1}{\lvert \mathbb S^3\rvert}\lVert \boldsymbol{u}_{X, Y}(0)\rVert_{\mathcal H^{1/2}}^2;
        \end{split}
    \end{equation}
    recall $\lvert \mathbb S^3\rvert= 2\pi^2$. 
\end{prop}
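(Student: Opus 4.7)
The plan is to use the Penrose transform of Lemma~\ref{lem:penrose} applied to $V(s,\theta,\omega)=U_{X,Y}(s)$, which produces exactly $u_{X,Y}$. Both identities in~\eqref{eq:energy_norm_penrose_physical} then express that, after the change of variables provided by the stereographic parametrization of $\mathbb S^3\setminus\{\mathrm{pt}\}$ by $\mathbb R^3$ (under which $d\sigma_{\mathbb S^3}=\tfrac{8}{(1+|x|^2)^3}\,dx$), the Minkowskian quantities on the slice $t=0$ pull back to the corresponding ODE quantities in $(X,Y)$. They can be verified directly; conceptually they are instances of the conformal invariance of the cubic wave equation, which is the exactly conformally invariant power on $\mathbb R^{1+3}$.

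For the first identity, the key observation is that on $\mathbb R\times\mathbb S^3$ the equation for $V$ reads $\partial_s^2 V-\Delta_{\mathbb S^3}V+V=V^3$, with conserved Hamiltonian
\[\mathcal E(V)=\int_{\mathbb S^3}\Bigl(\tfrac{1}{2}(\partial_s V)^2+\tfrac{1}{2}|\nabla_{\mathbb S^3}V|^2+\tfrac{1}{2}V^2-\tfrac{1}{4}V^4\Bigr)\,d\sigma,\]
which for the space-independent choice $V=U_{X,Y}$ evaluates to $|\mathbb S^3|\,E_{X,Y}$. Conformal invariance identifies $\mathcal E(V)$ with the Minkowskian functional appearing on the right of~\eqref{eq:energy_norm_penrose_physical}. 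Equivalently, I would substitute the explicit formulas for $u_{X,Y}(0)$, $\nabla u_{X,Y}(0)$ and $\partial_t u_{X,Y}(0)$ directly and reduce everything to the beta-function integrals $\int_0^\infty r^k(1+r^2)^{-4}\,dr=\tfrac{\pi}{32}$ for $k=2,4$, which reassemble to give $\tfrac{Y^2}{2}+\tfrac{X^2}{2}-\tfrac{X^4}{4}=E_{X,Y}$.

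For the second identity, a direct Fourier computation is cleanest. The classical Yukawa integral $\int_0^\infty \tfrac{r\sin(\rho r)}{1+r^2}\,dr=\tfrac{\pi}{2}e^{-\rho}$ yields $\widehat{(1+|x|^2)^{-1}}(\xi)=\tfrac{2\pi^2 e^{-|\xi|}}{|\xi|}$, and differentiating $\widehat{(\alpha+|x|^2)^{-1}}$ in the parameter $\alpha$ at $\alpha=1$ gives $\widehat{(1+|x|^2)^{-2}}(\xi)=\pi^2 e^{-|\xi|}$. Inserting these into $\|u_0\|_{\dot H^{1/2}}^2$ and $\|u_1\|_{\dot H^{-1/2}}^2$ and using the elementary identity $\int_{\mathbb R^3}|\xi|^{-1}e^{-2|\xi|}\,d\xi=\pi$ yields $2\pi^2 X^2$ and $2\pi^2 Y^2$ respectively, whose sum is $|\mathbb S^3|(X^2+Y^2)$. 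No step presents a serious obstacle; the substance of the proposition is the identification of the geometric meaning of $E_{X,Y}$ and $X^2+Y^2$, and once the Penrose transform is in place both identities reduce to routine computation.
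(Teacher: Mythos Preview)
Your proposal is correct. Both identities are established, and your numerical checks (the beta integrals $\int_0^\infty r^k(1+r^2)^{-4}\,dr=\pi/32$ for $k=2,4$, the Fourier transforms of $(1+|x|^2)^{-1}$ and $(1+|x|^2)^{-2}$, and $\int_{\mathbb R^3}|\xi|^{-1}e^{-2|\xi|}\,d\xi=\pi$) all come out right.

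The route differs from the paper's, however. For both identities the paper invokes an \emph{intertwining law} (cited from an external reference) of the form $\sqrt{-\Delta}\,u_{X,Y}(0,x)=(1+\cos\theta)^2\sqrt{1-\Delta_{\mathbb S^3}}\,U_{X,Y}(0)$, together with the Jacobian $dx=(1+\cos\theta)^{-3}\,d\sigma$, to transfer the $\mathbb R^3$ integrals to $\mathbb S^3$ where they collapse trivially since $U_{X,Y}(0)$ is constant. In particular, the paper handles the gradient term in the energy via $\int|\nabla u_0|^2=\int|\sqrt{-\Delta}\,u_0|^2$ and the $\mathcal H^{1/2}$ norm via $\int u_0\sqrt{-\Delta}\,u_0+\int\partial_t u\,\sqrt{-\Delta}^{-1}\partial_t u$. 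Your approach is more elementary and self-contained: you never need the intertwining law, only explicit Fourier transforms (which, incidentally, the paper recomputes later in Lemma~\ref{lem:LpHs}) and standard one-variable integrals. The paper's method is more conceptual and generalizes to non-constant data on $\mathbb S^3$; yours is quicker for this specific family and avoids the external reference. Your opening remark that the first identity is the statement $E(\boldsymbol u_{X,Y})=\mathcal E(U_{X,Y})$ under the Penrose transform is exactly the geometric content the paper is exploiting, but you should be aware that simply asserting ``conformal invariance identifies $\mathcal E(V)$ with the Minkowskian functional'' is not a proof on its own; it is your direct beta-integral computation that actually closes the argument.
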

\begin{proof}
    By~\eqref{eq:Penrose_transform_functions}, $u_{X, Y}(t, x)=\Omega(t, \lvert x \rvert)U_{X, Y}(s)=(\cos s + \cos \theta)U_{X, Y}(s)$. A computation reveals that 
    \begin{equation}\label{eq:partial_t_u_zero}
        \partial_t u_{X,Y}(0, x)=(1+\cos \theta)^2\dot{U}_{X, Y}(0).
    \end{equation}
    Now we write $\int_{\mathbb R^3} \lvert \nabla u_{X, Y}\rvert^2=\int_{\mathbb R^3}\lvert \sqrt{-\Delta} u_{X, Y}\rvert^2$ and we invoke the following \emph{intertwining law} (see~\cite[Lemma~A.3]{NeDoSStTa23}):
    \begin{equation}\label{eq:intertwining_law}
        \sqrt{-\Delta}u_{X, Y}(0, x)=(1+\cos \theta)^2\sqrt{1-\Delta_{\mathbb S^3}}(U_{X, Y}(0))=(1+\cos \theta)^2U_{X, Y}(0),
    \end{equation}
    where we used that $\sqrt{1-\Delta_{\mathbb S^3}}(U(0))=U(0)$, since $U(0)$ is constant on $\mathbb S^3$. 
    
    Denoting by $d\sigma$ the surface measure on $\mathbb S^3$, and applying the change of variables~\eqref{eq:Penrose_transform_functions} at $t=0$, we find the Jacobian $dx=(1+\cos \theta)^{-3}d\sigma$. We conclude 
    \begin{equation}\label{eq:compute_energy}
        \begin{split}
        \int_{\mathbb R^3}\!\!  \frac{ (\partial_t u_{X, Y}(0, x))^2}{2} + \frac{\lvert \nabla u_{X, Y}(0, x)\rvert^2}{2} -\frac{(u_{X, Y}(0, x))^4}{4}\, dx&=E_{X, Y}\!\int_{\mathbb S^3}(1+\cos \theta)\, d\sigma\\ 
        &=\lvert \mathbb S^3\rvert E_{X, Y},
        \end{split}
    \end{equation}
    proving the first identity in~\eqref{eq:energy_norm_penrose_physical}. To prove the second identity, start by noting
    \begin{equation}\label{eq:norm_Hone_half_explicit}
        \lVert \boldsymbol{u}_{X, Y}(0)\rVert_{\mathcal{H}^{1/2}}^2=\int_{\mathbb R^3} \left( u_{X, Y}\sqrt{-\Delta}u_{X,Y}(0, x) + \partial_t u_{X, Y}\sqrt{-\Delta}^{-1}\partial_t u_{X,Y}(0, x)\right)\,dx.
    \end{equation}
    Now combining~\eqref{eq:partial_t_u_zero} and~\eqref{eq:intertwining_law} yields
    \begin{equation}\label{eq:intertwining_law_negative}
        \sqrt{-\Delta}^{-1}\partial_t u_{X, Y}(0, x)=(1+\cos \theta)^{-1}\sqrt{1-\Delta_{\mathbb S^3}}^{-1}(\dot{U}_{X, Y}(0))=(1+\cos \theta)^{-1}\dot{U}_{X, Y}(0).
    \end{equation}
    Using this together with~\eqref{eq:intertwining_law} we obtain 
    \begin{equation}\label{eq:end_norm_proof}
        \lVert\boldsymbol{u}_{X, Y}(0)\rVert^2_{\mathcal{H}^{1/2}}=\left(U_{X, Y}(0)^2+\dot{U}_{X, Y}(0)^2\right)\int_{\mathbb S^3}\, d\sigma=\lvert\mathbb S^3\rvert (X^2+Y^2), 
    \end{equation}
    concluding the proof.
\end{proof}
\subsection{Determining the functions $T_\pm$}
In the cases $E_{X, Y}=1/4$ or $E_{X, Y}=0$, the ODE energy conservation law~\eqref{eq:ODEenergy_recall} can be explicitly integrated in terms of trigonometric and hyperbolic functions. Recalling that 
\begin{equation}\label{eq:hyperbolic_tangent}
    \begin{array}{cc}
        \tanh(x)=\frac{e^x-e^{-x}}{e^x+e^{-x}}, & \arctanh(x)=\frac12\log\frac{1+x}{1-x}, 
    \end{array}
\end{equation}
we have, for $E_{X, Y}=1/4$, the one-parameter family of solutions
\begin{equation}\label{eq:E_One_Fourth}
    U_{X, \pm\frac{\lvert X^2 -1\rvert}{2}}(s)=
        \begin{cases}
                    \tanh\left[\pm\frac{s}{\sqrt 2} + \arctanh(X)\right], & \lvert X\rvert <1, \\ 
                    \left(\tanh\left[\arctanh\left(\frac1X\right)\mp \frac{ s}{\sqrt 2 } \right]\right)^{-1}, & \lvert X\rvert >1,
        \end{cases}
\end{equation}
as well as the constant solutions $U_{\pm 1, 0}(s)\equiv\pm 1$. For $E_{X, Y}=0$ we have the null solution $U_{0, 0}\equiv 0$ and the one-parameter family, for $\lvert X\rvert>\sqrt{2}$,
\begin{equation}\label{eq:E_Zero}
    U_{X, \pm\sqrt{\frac{X^4}{2}-X^2}}(s)=\pm \frac{\sqrt 2}{\sin\left(\arcsin\left(\frac{\sqrt{2}}{X}\right)-s\right)}.
\end{equation}

For all these explicit solutions, computing the maximal times of existence $T_\pm$ will be immediate. On the other hand, when $E_{X, Y}\notin \{0, \tfrac14\}$ we will not have such explicit formulas, and instead we will express $T_\pm$ in terms of the following integrals:
\begin{equation}\label{eq:R_S_integrals_recall}
    \begin{split}
        R(X, Y)&=\int_{X\sign(Y)}^\infty\frac{dv}{\sqrt{2E_{X, Y}-v^2+\frac12 v^4}},\ (\text{convention:  }X\sign(Y)|_{Y=0}=\lvert X\rvert) \\ S(X, Y)&=\left(\int_{\sqrt{1+\sqrt{1-4E_{X,Y}}}}^{\lvert X\rvert} + \int_{\sqrt{1+\sqrt{1-4E_{X,Y}}}}^\infty \right)\frac{dv}{\sqrt{2E_{X, Y}-v^2+\frac12 v^4}}.
    \end{split}
\end{equation}
The reason for the convention in the first formula will be apparent in the proof of the following lemma. We also agree that these functions equal $+\infty$ at those $(X, Y)$ for which one of the integrals is not convergent or one of the square roots has a negative argument; see the forthcoming Remark~\ref{rem:R_S_domain} for details.
\begin{lemma}\label{lem:T_Plus_Expression}
    For $(X, Y)\in\mathbb R^2$, let $U=U_{X, Y}(s)$ be the unique solution to~\eqref{eq:DuffingIVP} and let $T_+(X, Y)$ be its maximal positive time of existence.
    \begin{itemize}
        \item[(i)] If $E_{X, Y}>1/4$, then $T_+(X, Y)=R(X, Y)$. 
        \item[(ii)] If $E_{X, Y}<1/4$, then 
        \begin{equation}\label{eq:T_plus_below}
        T_+(X, Y)=
        \begin{cases}
                \infty, & \lvert X\rvert \le 1,\\
                R(X, Y), & \lvert X\rvert >1, XY\ge 0, \\
                S(X, Y), & \lvert X\rvert>1, XY<0.
            \end{cases}
        \end{equation}
    \end{itemize}
    In all cases, the maximal negative time of existence is $T_-(X, Y)=-T_+(X, -Y)$.
\end{lemma}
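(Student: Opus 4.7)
The plan is to use the energy conservation~\eqref{eq:ODEenergy_recall} to reduce~\eqref{eq:DuffingIVP} to a first-order separable equation, then extract $T_+$ by integration. Writing $E=E_{X,Y}$, I will factor the velocity polynomial as $P(v):=2E-v^2+v^4/2=\frac12(v^2-a^2)(v^2-b^2)$, where $a^2,b^2=1\mp\sqrt{1-4E}$. The value of $E$ relative to $1/4$ decides whether the roots are real, and hence whether the phase portrait has turning points: for $E>1/4$ the polynomial is strictly positive, whereas for $E<1/4$ it has real roots $0<a<1<b$ and is nonnegative exactly on $\{|v|\le a\}\cup\{|v|\ge b\}$. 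This dichotomy is the key to the two cases of the lemma, since the level curve $\{\dot U^2=P(U)\}$ splits accordingly into either a single connected piece (unbounded in $U$), or a bounded periodic loop together with two unbounded branches.

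For case (i), since $P>0$ everywhere the velocity never vanishes, and in particular $Y\ne 0$ (because $E_{X,0}=1/4-(X^2-1)^2/4\le 1/4$ rules out $Y=0$ when $E>1/4$). Hence $\dot U$ retains the sign of $Y$ for all time, so separation of variables gives $\sign(Y)\,ds=dU/\sqrt{P(U)}$, and integrating from $U=X$ to $U\to\sign(Y)\infty$, while using the evenness of $P$ to rewrite the interval as $[X\sign(Y),\infty)$, yields $T_+=R(X,Y)$. For case (ii), I will first check that $|X|\le 1$ together with $P(X)=Y^2\ge 0$ forces $|X|\le a$, placing $(X,Y)$ on a closed periodic orbit of the phase portrait and hence giving $T_+=+\infty$. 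In the remaining cases $|X|\ge b$, and the orbit lies on an unbounded branch. When $XY\ge 0$ the trajectory heads immediately toward $\sign(X)\infty$ (the case $Y=0$ being handled by noting $\ddot U(0)=X(X^2-1)$ has the sign of $X$ when $|X|\ge b>1$), and the computation of case (i) applies verbatim to deliver $T_+=R(X,Y)$.

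The substantive case is $XY<0$ with $|X|>1$. After reducing by the ODE symmetry $U\mapsto -U$ to $X>0$, $Y<0$, the orbit decreases from $X$ to the turning point $b$ in time $\int_b^X dv/\sqrt{P(v)}$ (the square-root singularity at $v=b$ being integrable), then reflects and blows up at $+\infty$ after the additional time $\int_b^\infty dv/\sqrt{P(v)}$; summing these is precisely $S(X,Y)$. The time-reversal identity $T_-(X,Y)=-T_+(X,-Y)$ follows from the observation that $s\mapsto U_{X,Y}(-s)$ solves the same ODE with initial data $(X,-Y)$, hence coincides with $U_{X,-Y}$, and the maximal intervals transform accordingly. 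The main obstacle is not analytic but combinatorial: reliably partitioning the $(X,Y)$-plane into subcases determined by the signs of $X$, $Y$, $XY$, and $E-1/4$, and verifying that the integration endpoints and signs hidden in the definitions of $R$ and $S$ (in particular the convention $X\sign(Y)|_{Y=0}=|X|$, which is precisely what is needed to make the two pieces glue correctly at $Y=0$) correctly encode every orbit, including the detour-then-blow-up case that produces $S$.
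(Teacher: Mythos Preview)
Your proposal is correct and follows essentially the same route as the paper: reduce via the conserved energy to the separable first-order equation $\dot U=\pm\sqrt{P(U)}$, then read off $T_+$ by integrating $dv/\sqrt{P(v)}$ over the appropriate range, with the case split governed by whether $P$ has real zeros (i.e.\ $E\lessgtr 1/4$) and, when it does, by which component of $\{P\ge 0\}$ the initial point lies on and in which direction the orbit is launched. The paper organizes the same argument through the auxiliary map $\Phi(X,Y)=(X,E_{X,Y})$ and the picture in the $(X,E)$ plane, whereas you work directly with the factorization $P(v)=\tfrac12(v^2-a^2)(v^2-b^2)$; these are cosmetic differences, and your handling of the $Y=0$ convention, the integrable square-root singularity at the turning point $b$, and the time-reversal identity $U_{X,Y}(-s)=U_{X,-Y}(s)$ matches the paper's exactly.
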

\begin{remark}\label{rem:energy_onefourth}
    In the case $E_{X, Y}=1/4$ we have a more explicit expression, immediate consequence of~\eqref{eq:E_One_Fourth}:
    \begin{equation}\label{eq:T_plus_energy_critical}
            T_+(X, Y)=
            \begin{cases} 
                \infty, & \lvert X\rvert \le 1, \\
                \sqrt 2 \arctanh\left(\frac1{\lvert X\rvert}\right), & \lvert X\rvert >1, XY>0, \\ 
            \infty, & \lvert X\rvert >1, XY\le 0.
            \end{cases}
        \end{equation}
\end{remark}
Inspecting the formulas of the previous lemma yields the following corollary, which we will use in the next subsection.
\begin{corol}\label{cor:Tplus_Plus_Tminus}
For all $(X, Y)\in \mathbb R^2$, 
\begin{equation}\label{eq:Tplus_Plus_Tminus}
    T_+(X, Y)+\lvert T_-(X, Y)\rvert = \begin{cases}\displaystyle
        2\int_0^\infty \frac{dv}{\sqrt{2E_{X, Y}-v^2+\frac12 v^4}}, & E_{X, Y}>\frac14, \\ \infty, & E_{X, Y}=\frac14,\\\displaystyle
        2\int_{\sqrt{1+\sqrt{1-4E_{X, Y}}}}^\infty \frac{dv}{\sqrt{2E_{X, Y}-v^2+\frac12 v^4}}, & E_{X, Y}<\frac14.
    \end{cases}
\end{equation}
\end{corol}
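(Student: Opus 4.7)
The plan is to derive~\eqref{eq:Tplus_Plus_Tminus} from Lemma~\ref{lem:T_Plus_Expression} by a direct case analysis. The key preliminary step is the symmetry $T_-(X,Y)=-T_+(X,-Y)$ from the last line of Lemma~\ref{lem:T_Plus_Expression}, which yields $|T_-(X,Y)|=T_+(X,-Y)$, so that the quantity we want is $T_+(X,Y)+T_+(X,-Y)$. Since $E_{X,Y}$ depends on $Y$ only through $Y^2$, we have $E_{X,-Y}=E_{X,Y}$; hence $T_+(X,Y)$ and $T_+(X,-Y)$ always fall in the same case of Lemma~\ref{lem:T_Plus_Expression} and share the common integrand
\[
f(v):=\bigl(2E_{X,Y}-v^2+\tfrac12 v^4\bigr)^{-1/2},
\]
which is an even function of $v$.

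In the case $E_{X,Y}>1/4$, part~(i) of Lemma~\ref{lem:T_Plus_Expression} gives $T_+(X,\pm Y)=\int_{\pm X\sign(Y)}^\infty f(v)\,dv$. The change of variable $v\mapsto -v$ (legitimate by the evenness of $f$) converts the second integral into $\int_{-\infty}^{X\sign(Y)} f(v)\,dv$, so that the sum equals $\int_{-\infty}^\infty f(v)\,dv=2\int_0^\infty f(v)\,dv$, matching the first line of~\eqref{eq:Tplus_Plus_Tminus}. The corner case $Y=0$ does not arise here because the potential $X^2/2-X^4/4$ never exceeds $1/4$, so $E_{X,0}\le 1/4$.

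For $E_{X,Y}=1/4$, I would invoke Remark~\ref{rem:energy_onefourth}, according to which $T_+$ is infinite except when $|X|>1$ and $XY>0$; since $XY$ and $X(-Y)$ cannot both be positive, at least one of $T_+(X,\pm Y)$ is infinite, matching the second line. For $E_{X,Y}<1/4$ with $|X|>1$, part~(ii) of Lemma~\ref{lem:T_Plus_Expression} assigns one of $T_+(X,\pm Y)$ the value $R=\int_{|X|}^\infty f$ and the other the value $S=\int_{v_0}^{|X|}f+\int_{v_0}^\infty f$, where $v_0=\sqrt{1+\sqrt{1-4E_{X,Y}}}$. Adding, the $R$-integral combines with the first piece of $S$ to produce $\int_{v_0}^\infty f$, giving a total of $2\int_{v_0}^\infty f$. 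The boundary value $Y=0$ is handled by the convention $X\sign(0)=|X|$ combined with the identity $v_0=|X|$, which follows from $1-4E_{X,0}=(X^2-1)^2$ when $|X|>1$.

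There is no substantial obstacle: the proof is essentially an exercise in bookkeeping of signs across the cases listed in Lemma~\ref{lem:T_Plus_Expression}, and the actual integral manipulations reduce to the parity of $f$ and to splitting intervals at $\pm X\sign(Y)$ or at $v_0$.
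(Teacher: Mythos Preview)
Your approach---direct case-by-case inspection of the formulas in Lemma~\ref{lem:T_Plus_Expression}, using the symmetry $|T_-(X,Y)|=T_+(X,-Y)$ and the evenness of the integrand---is exactly what the paper intends (its entire proof is the single sentence ``Inspecting the formulas of the previous lemma yields the following corollary''), and your treatment of the cases $E_{X,Y}>\tfrac14$, $E_{X,Y}=\tfrac14$, and $E_{X,Y}<\tfrac14$ with $|X|>1$ is correct, including the handling of the boundary situation $Y=0$.

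There is, however, a case you do not address: $E_{X,Y}<\tfrac14$ with $|X|<1$. This is not a repairable omission. In that region the orbit is trapped in the potential well, so Lemma~\ref{lem:T_Plus_Expression}(ii) gives $T_+(X,Y)=T_+(X,-Y)=\infty$, whereas the integral $2\int_{v_0}^\infty f(v)\,dv$ on the right-hand side is finite (the integrand has only an integrable $1/\sqrt{v-v_0}$ singularity at $v_0$ and decays like $v^{-2}$ at infinity). Thus the corollary as stated is actually false on this set; for instance at $(X,Y)=(\tfrac12,0)$ one has $E=\tfrac{7}{64}<\tfrac14$ and $T_++|T_-|=\infty\ne 2\int_{\sqrt{7}/2}^\infty f$. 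The paper only ever applies the corollary to unbounded orbits (in Section~\ref{sec:subsection_beta}, to locate the level set $T_++|T_-|=\pi$), so the gap is harmless for its purposes, but strictly speaking both your argument and the paper's one-line proof are incomplete because the statement itself needs the additional hypothesis that $U_{X,Y}$ is unbounded (equivalently, $|X|>1$ or $E_{X,Y}\ge\tfrac14$).
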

    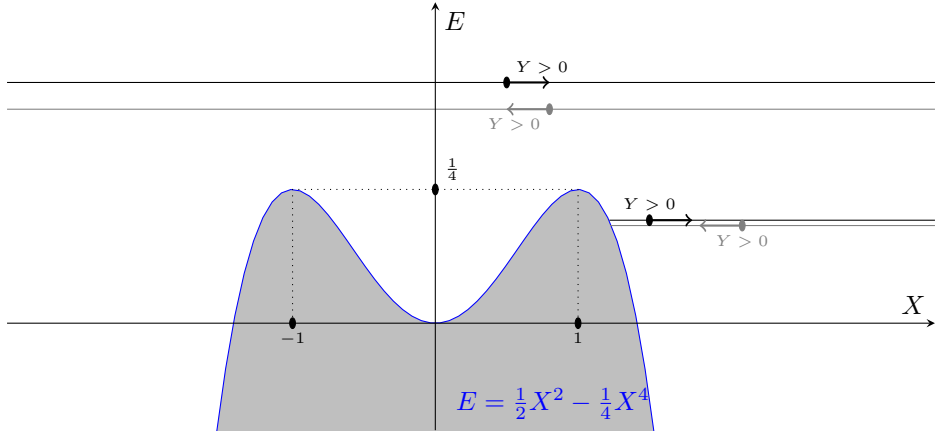
\begin{figure}
        \centering
        \begin{tikzpicture}
            %
            %
            %
            \begin{axis}[axis on top, axis x line=middle, axis y line=middle, 
            xmin=-3, xmax=3.5, ymin=-0.4, ymax=1.2, 
            xlabel=$\scriptsize{X}$, ylabel=$\scriptsize{E}$, 
            ytick=\empty, xtick=\empty, 
            xscale=1.8, 
            samples=50]
                %
                %
                %
                \addplot+[mark=none, domain=-1.6:1.6, draw=none]{-0.7168} \closedcycle;
                \addplot+[mark=none, fill=lightgray, draw=blue, domain=-1.6:1.6] {x^2-0.5*x^4};
                \draw[dotted] (-1,0)  node[anchor=north]{\tiny $-1$}--(-1, 0.5)--(1,0.5)--(1,0) node[anchor=north]{\tiny $1$};
                \draw[fill] (-1,0) circle(0.02);
                \draw[fill] (1,0) circle(0.02);
                \draw[fill] (0, 0.5) circle(0.02) node[anchor=south west]{\tiny $\tfrac14$};
                \draw (-3, 0.9)--(3.5, 0.9);
                \draw[fill] (0.5, 0.9) circle(0.02) node[above right=0pt]{\tiny $Y>0$};
                \draw[thick, ->] (0.5, 0.9)--(0.8, 0.9);
                \draw[gray] (-3, 0.8)--(3.5, 0.8);
                \draw[gray, fill] (0.8, 0.8) circle(0.02) node[below left=0pt]{\tiny $Y>0$};
                \draw[thick, gray, ->] (0.8, 0.8)--(0.5, 0.8);
                \draw (1.22, 0.385) -- (3.5, 0.385);
                \draw[gray] (1.224, 0.365) -- (3.5, 0.365);
                \draw[fill] (1.5, 0.385) circle(0.02) node[above]{\tiny $Y>0$};
                \draw[thick, ->] (1.5, 0.385)--(1.8, 0.385);
                \draw[fill, gray] (2.15, 0.365) circle(0.02) node[below]{\tiny $Y>0$};
                \draw[thick, ->, gray] (2.15, 0.365)--(1.85, 0.365);
                
                \draw[blue] (1.57,-0.3) node[left]{$E=\tfrac12 X^2-\tfrac14 X^4$};
            \end{axis}
        \end{tikzpicture}
        \caption{Solutions to the ODE initial value problem~\eqref{eq:DuffingIVP} in the $(X, E)$ plane. For $E<1/4$, the solutions may ``bounce" on the boundary of $\Ran(\Phi)$, given by $E=\frac12 X^2-\frac14 X^4$.} 
        \label{fig:Pos_Energy_Plane}
    \end{figure}
\begin{proof}[Proof of Lemma~\ref{lem:T_Plus_Expression}]
     If $\dot{U}_{X, Y}(s)$ does not change sign on an interval $(s_0, s_1)$, then we can solve for $\dot{U}(s)$ in the energy conservation law~\eqref{eq:ODEenergy_recall} and integrate, obtaining
    \begin{equation}\label{eq:Integrate_Energy_General}
        s_1-s_0=\sign(\dot U(s))\int_{U_{X, Y}(s_0)}^{U_{X, Y}(s_1)} \frac{dv}{\sqrt{2E_{X, Y} - v^2+\frac12 v^4}}.
    \end{equation}
    In order to use this, we consider the map $\Phi\colon (X, Y)\to (X, E_{X, Y})$. This map is 2:1, since $\Phi(X_1, Y_1)=\Phi(X_2, Y_2)$ if and only if $X_1=X_2$ and $\lvert Y_1\rvert= \lvert Y_2\rvert$. Its range is 
    \begin{equation}\label{eq:range_phi}
        \Ran(\Phi)=\{(X, E)\ :\ E\ge \frac12 X^2-\frac14 X^4\}.
    \end{equation}
    We will treat $\Phi$ as a change of variable in the phase space $(X, Y)$, and we will study the function $T_+$ in the plane $(X, E)$, where the trajectories of the solutions to~\eqref{eq:DuffingIVP} are especially simple; indeed, if $U=U(s)$ solves~\eqref{eq:DuffingIVP}, then $\Phi(U(s), \dot{U}(s))$ traces a horizontal line to the left or to the right depending on the sign of $\dot{U}(s)$, see Figure~\ref{fig:Pos_Energy_Plane}. The boundary of $\Ran(\Phi)$ contains the points at which $\dot{U}(s)=0$. 
    
    In the case $E_{X, Y}>1/4$, the line $(U_{X, Y}(s), E_{X, Y})$ never touches the boundary of $\Ran(\Phi)$, so $\sign(\dot{U}_{X, Y}(s))=\sign(Y)$ for all $s$ and~\eqref{eq:Integrate_Energy_General} yields
    \begin{equation}\label{eq:IntegrateEnergyEasy}
        \int_{X}^{U_{X, Y}(s)}\frac{ \sign(Y)dv}{\sqrt{2E_{X, Y} - v^2 + \frac12 v^4}} = s.
    \end{equation}
    The denominator never vanishes, so the integral is absolutely convergent on $(-\infty, \infty)$. This implies that $U_{X, Y}(s)$ blows up in finite time at $\infty$ if $Y>0$, or at $-\infty$ if $Y<0$. So, letting $s\to T_+(X, Y)$ in~\eqref{eq:IntegrateEnergyEasy} and changing variable in the integral in the case $Y<0$, we prove~(i).
    
    In the case $E_{X, Y}<1/4$, we see that those solutions with $\lvert X\rvert<1$ are bounded for all times, as they are trapped in a potential well\footnote{We will give an explicit expression for these solutions at the end of this subsection.}. So in particular $T_+(X,Y)=\infty$. On the other hand, for $\lvert X\rvert>1$, we have $\lvert U_{X, Y}(s)\rvert\ge \sqrt{1+\sqrt{1-4E_{X, Y}}}>1$ for all $s$. Since $\ddot{U}(s)$ satisfies
    \begin{equation}\label{eq:ODE_convexity}
        \ddot{U}(s)=U^3(s)-U(s),
    \end{equation}
    it is always strictly positive (if $X>1$) or strictly negative (if $X<-1$). So, if $XY>0$, then $\dot{U}_{X, Y}(s)$ never changes sign for $s>0$, which means that the line $\Phi(U_{X, Y}(x), \dot{U}_{X, Y}(s))$ does not touch the boundary of $\Ran(\Phi)$ and the analysis of the previous case applies, yielding $T_+=R$. We remark that this also holds if $Y=0$, in which case $\lvert X\rvert=\sqrt{1+\sqrt{1-4E_{X, Y}}}$; this is the reason of our choice of conventions in the formula~\eqref{eq:R_S_integrals_recall} for $R(X, Y)$. 
    
    If, on the other hand, $XY<0$ and $E_{X, Y}<1/4$, then $\lvert U(s)\rvert$ reaches its global minimum $\sqrt{1+\sqrt{1-4E_{X, Y}}}$ at the time 
    \begin{equation}\label{eq:speed_change_sign}
        s=\int_{\sqrt{1+\sqrt{1-4E_{X, Y}}}}^X \frac{dv}{\sqrt{2E_{X, Y} -v^2+ \frac12 v^4}}, 
    \end{equation}
    as we compute by applying~\eqref{eq:Integrate_Energy_General}; at this point, the line $\Phi(U(s), \dot{U}(s))$ touches the boundary of $\Ran(\Phi)$ and $\dot{U}$ changes sign. Applying~\eqref{eq:Integrate_Energy_General} again we get
    \begin{equation}\label{eq:T_Plus_Bounce}
        T_+(X, Y)=\left(\int_{\sqrt{1+\sqrt{1-4E_{X, Y}}}}^X + \int_{\sqrt{1+\sqrt{1-4E_{X, Y}}}}^\infty \right)\frac{dv}{\sqrt{2E_{X, Y} -v^2+ \frac12 v^4}}=S(X, Y).
    \end{equation}
    We have thus proved the point~(ii). 
    
    To conclude the proof, it suffices to note that $U_{X, Y}(s)=U_{X, -Y}(-s)$, hence $T_-(X, Y)=-T_+(X, -Y)$, as claimed.
\end{proof}
\begin{remark}\label{rem:R_S_domain}
    We can now give the announced description of the domain of definition of $R$ and of $S$. The analysis of the previous proof shows that $R(X, Y)<\infty$ in the region 
    \begin{equation}\label{eq:Dom_R}
        \begin{split}
            \operatorname*{Dom}(R)=&\{(X, Y)\in\mathbb R^2\ : E_{X, Y}>\frac14\} \cup\{(X, Y)\ :\ X>1, Y\ge 0\} \\
            &\cup \{(X, Y)\ :\ X<1, Y\le 0\},
        \end{split}
    \end{equation}
    while $S(X, Y)<\infty$ in the region 
    \begin{equation}\label{eq:Dom_S}
        \operatorname*{Dom}(S)=\{(X, Y)\in\mathbb R^2\ : E_{X, Y}<\frac14 , \lvert X\rvert >1\}.
    \end{equation}
\end{remark}

We conclude this subsection with some considerations on Remark~\ref{rem:completely_explicit}; these will not be needed in the rest of the paper. The solutions to the Duffing equation~\eqref{eq:DuffingIVP} that satisfy $E_{X, Y}<1/4$ and $\lvert X\rvert <1$ are defined for all $s\in\mathbb R$ and can be expressed in terms of the Jacobi elliptic sine, denoted by $\operatorname*{sn}$:
\begin{equation}\label{eq:explicit_global}
    \begin{array}{ccc}
        U(s)=2 A \operatorname*{sn}(\omega s + \theta, k^2), & \text{where }\omega^2=1-\frac{A^2}{2},\,  k^2=\frac{A^2}{2-A^2},
    \end{array}
\end{equation}
for arbitrary $\lvert A\rvert <\sqrt 2$ and $\theta\in\mathbb R$. We recall that the function $\operatorname*{sn}$ is defined in terms of the inverse of an elliptic integral; \begin{equation}\label{eq:JacobiSN}
    \begin{array}{cc}
        \operatorname*{sn}(u, k^2)=\sin(\phi), & \displaystyle\text{where }u=\int_0^\phi\frac{d\theta}{\sqrt{1-k^2\sin(\theta)}}.
    \end{array}
\end{equation}
The fact that~\eqref{eq:explicit_global} indeed solves~\eqref{eq:DuffingIVP} is well-known and can be easily checked with a basic computer assisted computation.

\subsection{The threshold function $\beta$} \label{sec:subsection_beta}

To conclude the proof of Theorem~\ref{thm:main}, 
we need to express the level sets~\eqref{eq:level_sets_Tplus} in terms of a single threshold function $\beta=\beta(X)$.

We begin with the case $Y\ge 0$; with this condition, $\Phi\colon (X, Y)\mapsto (X, E_{X, Y})$ is bijective onto its range $\Ran(\Phi)$, with inverse 
\begin{equation}\label{eq:inverse_Phi}
    \begin{array}{cc}
        \Phi^{-1}(X, E)=(X, \sqrt{2E-X^2+\frac12 X^4}), &\forall (X, E)\in \Ran(\Phi).
    \end{array}    
\end{equation}
Recall that $\Ran(\Phi)=\{2E\ge X^2 -\tfrac12 X^4\}$, and note that the points at the boundary of $\Ran(\Phi)$ correspond to $Y=0$. With slight abuse of notation we will regard $T_\pm$, $R$ and $S$ as functions of $(X, E)$, implicitly assuming the change of variable~\eqref{eq:inverse_Phi}. Therefore the formulas of Lemma~\ref{lem:T_Plus_Expression} read in our $Y\ge 0$ case as 
\begin{equation}\label{eq:T_Plus_Expression_XE}
    T_+(X, E)=
    \begin{cases}
        R(X, E)=\int_{X}^\infty\frac{dv}{\sqrt{2E-v^2+\frac12 v^4}}, &E\ge \frac14,\\
        \text{as above}, &
        E<\frac14,X>0,\\
        S(X, E)=\left(\int_{\sqrt{1+\sqrt{1-4E}}}^{\lvert X\rvert} + \int_{\sqrt{1+\sqrt{1-4E}}}^\infty \right)\frac{dv}{\sqrt{2E-v^2+\frac12 v^4}},&E<\frac14, X<0;
    \end{cases}
\end{equation}
see Figure~\ref{fig:TPlusXE}. We also recall Corollary~\ref{cor:Tplus_Plus_Tminus}:
\begin{equation}\label{eq:T_Plus_plus_T_Minus}
    T_+(X, E)+\lvert T_-(X, E)\rvert=
    \begin{cases}\displaystyle
        2\int_0^\infty \frac{dv}{\sqrt{2E-v^2+\frac12 v^4}}, & E>\frac14, \\ \infty, & E=\frac14,\\\displaystyle
        2\int_{\sqrt{1+\sqrt{1-4E}}}^\infty \frac{dv}{\sqrt{2E-v^2+\frac12 v^4}}, & E<\frac14.
    \end{cases}
\end{equation}

It follows from~\eqref{eq:T_Plus_plus_T_Minus} that $T_+(X, E)+\lvert T_-(X, E)\rvert$ is decreasing in $E$ for $E>\frac14$, which is obvious, and it is increasing in $E$ for $E<\frac14$. Indeed
\begin{equation}\label{eq:TPlusTMinusDecreasing}
    \int_{\sqrt{1+\lambda}}^\infty \frac{dv}{\sqrt{2E-v^2+\frac12 v^4}}=\sqrt 2 \int_0^\infty \frac{dw}{\sqrt{w^2+2w\sqrt{1+\lambda}}\sqrt{w^2+2w\sqrt{1+\lambda}+2\lambda}}, 
\end{equation}
where $\lambda=\sqrt{1-4E}$, and the right-hand side of this expression is clearly decreasing in $\lambda$.
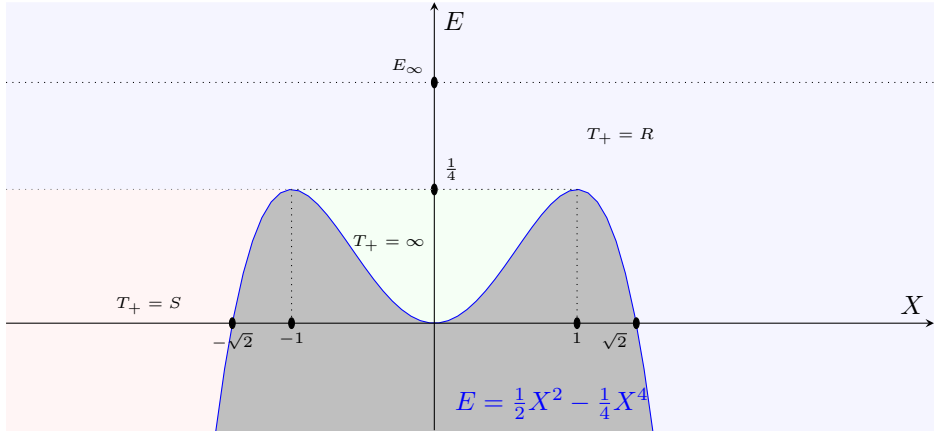
\begin{figure}
    \centering
    \begin{tikzpicture}
            %
            %
            %
            \begin{axis}[axis on top, axis x line=middle, axis y line=middle, 
            xmin=-3, xmax=3.5, ymin=-0.4, ymax=1.2, 
            xlabel=$\scriptsize{X}$, ylabel=$\scriptsize{E}$, 
            ytick=\empty, xtick=\empty, 
            xscale=1.8, 
            samples=50]
                %
                %
                %
                \addplot+[mark=none, domain=-1.6:1.6, draw=none]{-0.7168} \closedcycle;
                \addplot+[mark=none, 
                fill=lightgray, 
                draw=blue, domain=-1.6:1.6, name path=energy] {x^2-0.5*x^4};
                \draw[dotted] (-1,0)  node[anchor=north]{\tiny $-1$}--(-1, 0.5)--(1,0.5)--(1,0) node[anchor=north]{\tiny $1$};
                \draw[fill] (-1,0) circle(0.02);
                \draw[fill] (1,0) circle(0.02);
                \draw[fill] (0, 0.5) circle(0.02) node[anchor=south west]{\tiny $\tfrac14$};

                \draw[blue] (1.57,-0.3) node[left]{$E=\tfrac12 X^2-\tfrac14 X^4$};
                %
                %
                \draw[dotted] (-3, 0.9)--(3.5, 0.9);
                \draw[fill] (0, 0.9) circle(0.02) node[above left]{\tiny $E_\infty$};
                \draw[fill] (1.415, 0) circle(0.02) node[below left]{\tiny $\sqrt 2$};
                \draw[fill] (-1.415, 0) circle(0.02) node[below]{\tiny $-\sqrt 2$};
                \draw[dotted, gray] (-3, 0.5)--(1, 0.5);
                
                \addplot[draw=none, name path=left, domain=-3:1]{0.5};
                \addplot[draw=none, name path=bottom, domain=-3:3.5]{-0.7168};
                \addplot[red!10, opacity=0.4] fill between[of=left and energy, soft clip={domain=-1.6:-1}];
                \addplot[red!10, opacity=0.4] fill between[of=left and bottom, soft clip={domain=-3:-1.6}];

                \addplot[green!10, opacity=0.4] fill between[of=left and energy, soft clip={domain=-1:1}];
                %
                \addplot[draw=none, name path=up, domain=-3:3.5]{\pgfkeysvalueof{/pgfplots/ymax}};
                \addplot[blue!10, opacity=0.4] fill between[of=left and up, soft clip={domain=-3:1}];
                \addplot[blue!10, opacity=0.4] fill between[of=energy and up, soft clip={domain=1:1.6}];
                \addplot[blue!10, opacity=0.4] fill between[of=bottom and up, soft clip={domain=1.6:3.5}];
                %
                \draw (-2, 0) node[above]{\tiny $T_+=S$};
                \draw (0, 0.3) node[left]{\tiny $T_+=\infty$};
                \draw (1, 0.7) node[right]{\tiny $T_+=R$};
            \end{axis}
        \end{tikzpicture}
    \caption{The maximal positive time $T_+(X, E)$ equals $S$, $R$ or $\infty$ depending on the location of $(X, E)$. This picture depicts the $Y\ge 0$ case.}
    \label{fig:TPlusXE}
\end{figure}
We claim that this implies the existence of a $E_\infty>0$ such that, if $E\notin(0, E_\infty)$, then $T_+(X, E)<\pi$. Indeed, letting $E_\infty$ denote the unique solution to the equation 
\begin{equation}\label{eq:E_Infty}
    2\int_0^\infty \frac{dv}{\sqrt{2E_\infty - v^2+\frac12v^4}}=\pi,
\end{equation}
and noting that for $E=0$ we have 
\begin{equation}\label{eq:EnergyZeroTotalTime}
    \left.\big(T_++\lvert T_-\rvert\big)\right|_{E=0} = 2\sqrt 2\int_{\sqrt 2}^\infty \frac{dv}{v\sqrt{v^2-2}}=\pi,
\end{equation}
we conclude that 
\begin{equation}
    \begin{array}{ccc}
        T_+(X, E)<\pi, &\text{if }E\le0& \text{or }E\ge E_\infty.
    \end{array}
\end{equation}

We now turn to the strip $E\in (0, E_\infty)$. To begin, we observe that at the boundary of $\Ran(\Phi)$ there are exactly two points where $T_+=\pi$. To prove this we compute
\begin{equation}\label{eq:BoundaryTPlus}
    T_+(X, E)\big|_{E=\frac12X^2-\frac14X^4}=\sqrt{2}\int_0^\infty \frac{dw}{\sqrt{X^2(\cosh^2(w)+1)-2}},
\end{equation}
and we note that there are precisely two values $X=\pm X_C$ such that the right-hand integral equals $\pi$; indeed, it is an even function of $X$ that is strictly decreasing for $X\in(1, \sqrt{2})$, it tends to $+\infty$ at $X=1$ (corresponding to the constant solution $U_{1, 0}=1$) while it equals $\pi/2$ at $X=\sqrt 2$ (corresponding to the explicit solution $\sqrt{2}(\sin(\frac\pi 2- s))^{-1}$).

Still inside the strip $E\in (0, E_\infty)$, in the region where $T_+=R$ (recall Figure~\ref{fig:TPlusXE}), there is a strictly decreasing function $\tilde{\beta}_+$ such that 
\begin{equation}\label{eq:BetaTildePlus}
    \begin{array}{ccc}
        T_+(X, E)< \pi\, [\text{resp. }T_+(X, E)\ge \pi] & \iff & E> \tilde{\beta}_+(X)\, [\text{resp. }E\le\tilde{\beta}_+(X)],
    \end{array}
\end{equation}
because $R=R(X, E)$ is manifestly a strictly decreasing function of $X$ and $E$ separately. At $X=X_C$, we have $\tilde{\beta}_+(X_C)=\frac12X_C^2-\frac14X_C^4$ and  $\tilde{\beta}_+$ ceases to exist for $X>X_C$; indeed for $X>X_C$ we must have $R(X, E)<\pi$, by monotonicity. 

On the other hand, in the region where $T_+(X, E)=S(X, E)$, we claim that there is a strictly increasing function $\tilde{\beta}_-(X)$ such that
\begin{equation}\label{eq:BetaTildeMinus}
    \begin{array}{ccc}
        T_+(X, E)< \pi\, [\text{resp. }T_+(X, E)\ge\pi] & \iff & E< \tilde{\beta}_-(X)\, [\text{resp. }E\ge\tilde{\beta}_-(X)],
    \end{array}
\end{equation}
and $\tilde{\beta}_-$ ceases to exist at $X=-X_C$. To prove this claim we argue like in the previous case. The only difference is that $S(X, E)$ is strictly increasing in both $\lvert X\rvert $ and $E$; this can be seen by writing $\lambda=\sqrt{1-4E}$ and performing the change of variable $w=v-\sqrt{1+\lambda}$, yielding
\begin{equation}\label{eq:S_Change_Vars}
    \begin{split}
        S(X, Y)&= \int_{\sqrt{1+\lambda}}^{\lvert X\rvert}+\int_{\sqrt{1+\lambda}}^\infty \frac{dv}{\sqrt{2E -v^2+\frac12 v^4}} \\
        &= \left(\int_0^{\lvert X\rvert -\sqrt{1+\lambda}} + \int_0^\infty\right) 
        \frac{dw}{\sqrt{ (w^2+2w\sqrt{1+\lambda})(w^2+2w\sqrt{1+\lambda}+2\lambda)}},
    \end{split}
\end{equation}
which is manifestly a decreasing function of $\lambda$ and an increasing function of $\lvert X\rvert$. A plot of the graphs of $\tilde\beta_\pm$ is in  Figure~\ref{fig:ProofThmOne}.  Note that $\tilde\beta_-(X)\le \frac14<\tilde\beta_+(X)$ for all $X\le -X_C$.
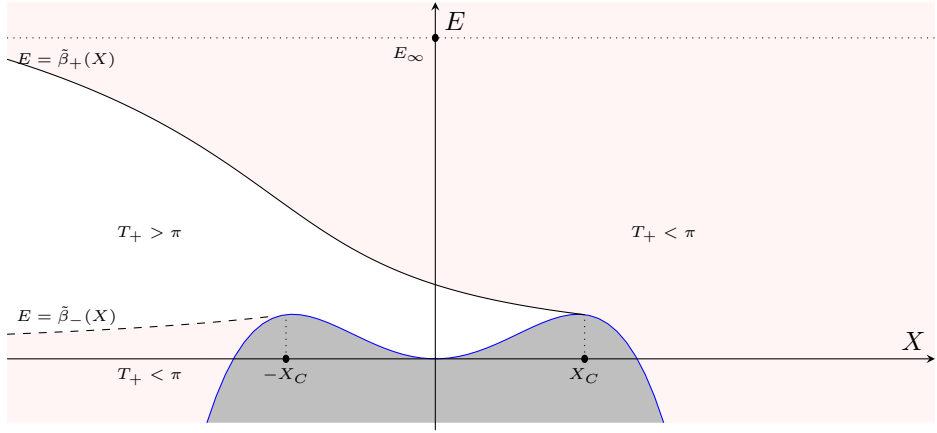
\begin{figure}
    \centering
          \begin{tikzpicture}
          \tikzmath{\EInfinity = 1.8; \UCritical = 1.046;} 
            \begin{axis}[axis on top, axis x line=middle, axis y line=middle, 
            xmin=-3, xmax=3.5, ymin=-0.4, ymax=2, 
            xlabel=$\scriptsize{X}$, ylabel=$\scriptsize{E}$, 
            ytick=\empty, xtick=\empty, 
            xscale=1.8,
            samples=50]
                %
                %
                %
                \addplot+[mark=none, domain=-1.6:1.6, draw=none]{-0.358} \closedcycle;
                \addplot+[mark=none, fill=lightgray, draw=blue, domain=-1.6:1.6, name path=energy] {0.5*x^2-0.25*x^4};
                %
                
                \draw[dotted] (-3, \EInfinity)--(3.5, \EInfinity);
                \draw[fill] (0, \EInfinity) circle(0.02) node[below left]{\tiny $E_\infty$};
                \draw (-\UCritical, 0) node[anchor=north]{\tiny $-X_C$};
                \draw (\UCritical, 0) node[anchor=north]{\tiny $X_C$};
                \draw[fill] (-\UCritical,0) circle(0.02);
                \draw[fill] (\UCritical,0) circle(0.02);
                \draw[dotted] (-\UCritical,0) -- (-\UCritical, 0.247);
                \draw[dotted] (\UCritical,0) -- (\UCritical, 0.247);
                %
                \addplot[dashed, domain=-3:-\UCritical, name path=bottom] table[col sep=comma]{ExportBOTTOM.txt};
                %
                
                 
                \addplot[domain=-3:\UCritical, name path=top] table[col sep=comma]{exportUP.txt}; 
                %
                %
                \addplot[draw=none, name path=left, domain=-3:-1.6]{-0.358};
                \addplot[draw=none, name path=right, domain=1.6:3.5]{-0.358};
                \addplot[draw=none, name path=up, domain=-3:3.5]{3};
                \addplot[red!10, opacity=0.4] fill between[of=left and bottom, soft clip={domain=-3:-1.6}];
                \addplot[red!10, opacity=0.4] fill between[of=energy and bottom, soft clip={domain=-1.6:-\UCritical}];
                \addplot[red!10, opacity=0.4] fill between[of=up and top, soft clip={domain=-3:\UCritical}];
                \addplot[red!10, opacity=0.4] fill between[of=up and energy, soft clip={domain=\UCritical:1.6}];
                \addplot[red!10, opacity=0.4] fill between[of=up and right, soft clip={domain=1.6:3.5}];
                %
                \draw (-3, 0.13) node[anchor=south west]{\tiny $E=\tilde{\beta}_-(X)$};
                \draw (-3, 1.68) node[anchor=west]{\tiny $E=\tilde{\beta}_+(X)$};
                \draw(-2, 0.7) node {\tiny $T_+>\pi$};
                \draw (1.6,0.7) node{\tiny $T_+<\pi$};
                \draw (-2,-0.1) node{\tiny $T_+<\pi$};
            \end{axis}
        \end{tikzpicture}
    \caption{The blow-up (lightly shaded) and scattering (white) regions of Theorem~\ref{thm:main} in the $(X, E)$ plane for $Y\ge 0$. The case $Y<0$ is obtained by the reflection $X\mapsto -X$.
    }
    \label{fig:ProofThmOne}
\end{figure}

We conclude the analysis of $\tilde\beta_\pm$ by noting that 
\begin{equation}\label{eq:betatilde_limit}
    \begin{array}{cc}
        \displaystyle \lim_{X\to -\infty} \tilde{\beta}_+(X)=E_\infty, & \displaystyle\lim_{X\to -\infty} \tilde{\beta}_-(X)=0.
    \end{array}
\end{equation}
Indeed, as $X\to -\infty$ we have $T_+(X, E)\to T_+(0, E) + \lvert T_-(0, E)\rvert$  (see~\eqref{eq:T_Plus_plus_T_Minus}), and we already saw that the latter equals $\pi$ for $E=E_\infty$ or $E=0$. 

The case $Y<0$ is obtained by the previous one by the symmetry $U_{-X, -Y}=-U_{X, Y}$; the level sets of $T_+$ are obtained from the previous ones via the reflection $X\mapsto -X$. So, when $Y<0$ and $E\ge \frac14$, or $Y<0$ and $E<\frac14$ and $X<0$,
\begin{equation}\label{eq:BetaTildePlus_negative}
    \begin{array}{ccc}
        T_+(X, E)< \pi\, [\text{resp. }T_+(X, E)\ge \pi] & \iff & E> \tilde{\beta}_+(-X)\, [\text{resp. }E\le\tilde{\beta}_+(-X)].
    \end{array}
\end{equation}
When $Y<0$ and $E<\frac14$ and $X>0$, 
\begin{equation}\label{eq:BetaTildeMinus_negative}
    \begin{array}{ccc}
        T_+(X, E)< \pi\, [\text{resp. }T_+(X, E)\ge \pi] & \iff & E< \tilde{\beta}_-(-X)\, [\text{resp. }E\ge\tilde{\beta}_+(-X)].
    \end{array}
\end{equation}

We pull back these results to the plane $(X, Y)$ via the map $\Phi(X, Y)=(X, E_{X, Y})$, obtaining that the threshold function $\beta$ is
\begin{equation}\label{eq:BetaThreshold}
    \beta(X)=\begin{cases}
        \sqrt{2\tilde{\beta}_+(X)-X^2+\frac12X^4},& X\le X_C,\\ 
        -\sqrt{2\tilde{\beta}_-(-X)-X^2+\frac12X^4}, & X>X_C.
    \end{cases}
\end{equation}
Note in particular that $\beta(X_C)=0$. Since $\tilde{\beta}_\pm(X)$ have a limit at $X\to -\infty$, it is clear that $\beta(X)\to \mp \infty$ as $X\to \pm \infty$. Moreover, $\beta(X)> -\beta(-X)$, as we prove by distinguishing cases: for $-X_C<X\le X_C$, this is obvious as $\beta(X)\ge 0$ while $\beta(-X)<0$; for $X\le -X_C$ this follows from $\tilde\beta_-(X)<\tilde\beta_+(X)$, and the remaining case $X>X_C$ follows from the previous one by the change of variable $X\mapsto -X$.

It remains to prove that $\beta$ is decreasing. This is easier to see by going back to considering $R$ and $S$ as functions on the $(X, Y)$ plane, yielding the following alternative characterization of the function $\beta$:
\begin{itemize}
    \item for $X\le X_C$, $Y=\beta(X)$ is the unique nonnegative solution to $R(X, Y)=\pi$; 
    \item for $X>X_C$, $Y=\beta(X)$ is the unique negative solution to $S(X, Y)=\pi$. 
\end{itemize}
This immediately shows that $\beta$ is decreasing, because $R(X, Y)$ is increasing in each variable separately for $Y\ge 0$, and similarly, $S(X, Y)$ is increasing in each variable separately for $Y<0$. 

\subsection{Complementary results}  
We collect here two more results which we will need in the rest of the paper, starting with the following asymptotics for blow-up solutions to the Duffing initial value problem~\eqref{eq:DuffingIVP}.
\begin{lemma}\label{lem:asympt_duffing}
Let $U$ be a solution to~\eqref{eq:DuffingIVP} with maximal time of existence $(T_-,T_+)$. Then, if $T_+<\infty$, $U$, $\dot{U}$ and $\ddot{U}$ have the same sign $\pm$ close to $T_+$ and:
\begin{align}
    \label{eq:T52} U(s)&=\pm \frac{\sqrt{2}}{T_+-s}+O(T_+-s),\quad s\nearrow T_+\\
    \label{eq:T53} \dot{U}(s)&=\pm \frac{\sqrt{2}}{(T_+-s)^2}+O(1),\quad s\nearrow T_+\\
    \label{eq:T54} \left|U^{(k)}(s)\right|&\lesssim \frac{1}{(T_+-s)^{1+k}},\quad s\nearrow T_+
\end{align}
\end{lemma}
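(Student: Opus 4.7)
The strategy is to combine the ODE energy conservation with the change of variable $W=1/U$, which transforms the blow-up of $U$ into a regular ODE for $W$. The asymptotics of $W$ yield~\eqref{eq:T52}, whence~\eqref{eq:T53} follows from the energy identity, and the bounds~\eqref{eq:T54} come from a simple induction using the differential equation itself.

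First, I would establish the sign structure near $T_+$. Since $T_+<\infty$ and~\eqref{eq:DuffingIVP} is locally well-posed, $U$ is unbounded on $[0,T_+)$. The identity $\ddot U = U(U^2-1)$ shows that once $|U|>1$, $\ddot U$ has the same sign as $U$, forcing $\dot U$ to be eventually monotone. Combined with $\dot U^2 = U^4/2 - U^2 + 2E\to\infty$, one concludes that near $T_+$ the quantities $U$, $\dot U$ and $\ddot U$ share a common sign $\varepsilon\in\{\pm 1\}$, and $|U|\to\infty$ monotonically. By the $U\mapsto -U$ symmetry of the equation I may assume $\varepsilon=+1$.

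Next, solving the energy identity for the positive root of $\dot U$ gives
\[
\dot U = \tfrac{U^2}{\sqrt 2}\sqrt{1 - 2U^{-2} + 4E U^{-4}} = \tfrac{U^2}{\sqrt 2} - \tfrac{1}{\sqrt 2} + O(U^{-2}).
\]
Setting $W=1/U$ turns this into the regular Riccati-type equation $\dot W = -\tfrac{1}{\sqrt 2} + \tfrac{W^2}{\sqrt 2} + O(W^4)$, with $W(s)\to 0$ as $s\nearrow T_+$. Since $\dot W$ is bounded, the mean value theorem gives the crude bound $W(s)=O(T_+-s)$; feeding this into the integrated version of the equation refines it to $W(s)=(T_+-s)/\sqrt 2 + O((T_+-s)^3)$. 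Inverting proves~\eqref{eq:T52}, and substituting back into the expansion of $\dot U$ gives~\eqref{eq:T53}.

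Finally, differentiating $\ddot U = U^3 - U$ repeatedly and using this same relation to eliminate any occurrence of $\ddot U$, one expresses $U^{(k)}$ as a polynomial $P_k(U,\dot U)$. Assigning weight $1$ to $U$ and weight $2$ to $\dot U$, a direct induction shows that every monomial of $P_k$ has total weight at most $k+1$: each time differentiation is applied, either $U$ becomes $\dot U$ (weight $+1$) or $\dot U$ becomes $\ddot U = U^3-U$ (weight $3-2=+1$). Since $|U|\lesssim(T_+-s)^{-1}$ and $|\dot U|\lesssim(T_+-s)^{-2}$ by what precedes, each such monomial is bounded by $(T_+-s)^{-(k+1)}$, yielding~\eqref{eq:T54}. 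The main technical point is the bootstrap in the third paragraph: the crude $O(T_+-s)$ bound on $W$ must be obtained first, from the sole boundedness of $\dot W$, before feeding it into the $W^2$ correction term; skipping this ordering would make the argument circular.
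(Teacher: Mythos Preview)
Your proof is correct and follows the same strategy as the paper: extract the leading asymptotic of $U$ from the energy identity $\dot U^2=\tfrac12 U^4-U^2+2E$, then deduce~\eqref{eq:T53} from it and~\eqref{eq:T54} by induction on the equation. The only cosmetic difference is that the paper writes $T_+-s=\int_{U(s)}^\infty dv/\sqrt{2E-v^2+\tfrac12 v^4}$ and expands the integrand as $\tfrac{\sqrt 2}{v^2}(1+O(v^{-2}))$, whereas you substitute $W=1/U$ and integrate the resulting regular ODE; your weight argument for~\eqref{eq:T54} spells out what the paper leaves as an omitted induction.
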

\begin{proof}
Since $U$ blows up in finite time $T_+$, $|U(s)|+|\dot{U}(s)|$ is not bounded on $[0,T_+)$. As a consequence, $U$ cannot be bounded on $[0,T_+)$ (since $\ddot{U}+U=U^3$ would imply that $\dot{U}$ is also bounded). Thus (changing $U$ into $-U$ if necessary) there exists a sequence $(s_n)_n$, with $0\leq s_n<T_+$, $s_n\to T_+$, such that 
$$\lim_{n}U(s_n)=+\infty,\quad \forall s\in [0,s_n), \;U(s)\leq U(s_n).$$
This implies that $\dot{U}(s_n)\geq 0$. Furthermore, if $n$ is large, $U(s_n)>1$ and thus $\ddot{U}(s_n)=U^3(s_n)-U(s_n)>0$. By the equation \eqref{eq:DuffingIVP} and a simple bootstrap argument, choosing $n$ large, we deduce:
$$ \forall s> s_n,\quad U(s)>1, \;\dot{U}(s)>0,\; \ddot{U}(s)>0.$$
This shows the first point of the lemma. Since $U$ (and thus $\dot{U}$) is not bounded, this also implies
\begin{equation}
    \label{eq:T51}
    \lim_{s\to\infty}U(s)=\lim_{s\to\infty} \dot{U}(s)=\infty.
\end{equation}
We next consider the conserved energy of \eqref{eq:DuffingIVP}, 
$$E=E_{X,Y}=\frac{1}{2} \dot{U}^2(s)+\frac{1}{2}U^2(s)-\frac{1}{4}U^4=\frac 12X^2+\frac 12 Y^2-\frac 14 X^4.$$
For $s<T_+$ close to $T_+$ we have, since $\dot{U}$ is positive 
\begin{equation}
    \label{eq:T60}
    \dot{U}(s)=\sqrt{2E-U^2+\frac 12 U^4}.
\end{equation}
Integrating between $s$ and $T_+$, and changing variables, we obtain
\begin{equation}
    \label{eq:T61}
    \int_{U(s)}^\infty \frac{dv}{\sqrt{2E-v^2+\frac{1}{2}v^4}}=T_+-s.
\end{equation}
Since $\frac{1}{\sqrt{2E-v^2+\frac{1}{2}v^4} }=\frac{\sqrt{2}}{v^2}\left(1+O(v^{-2})\right),\quad v\to\infty,$ we obtain
$$\int_{U(s)}^{\infty}\left(\frac{\sqrt{2}}{v^2}+O\left(\frac{1}{v^4}\right)\right)dv=T_+-s,\quad s\nearrow T_+,$$
and thus $\frac{\sqrt{2}}{U(s)}+O\left(\frac{1}{U^3(s)}\right)=T_+-s$, $s\to T_+$. This yields 
$$ U(s)=\frac{\sqrt{2}}{T_+-s}+O\left(\frac{1}{(T_+-s)U^2(s)}\right),\quad s\to T_+,$$
which implies \eqref{eq:T52}. Combining with \eqref{eq:T60}, we obtain \eqref{eq:T53}.

One can prove  \eqref{eq:T54} by induction, using the equation $\ddot{U}+U=U^3$ together with \eqref{eq:T52} and \eqref{eq:T53}. We omit the details.
\end{proof}
We conclude the section with the following functional properties of the initial data of the cubic wave equation~\eqref{eq:NLW_recall}.
\begin{lemma}\label{lem:general_spaces_initial_data}
\label{lem:LpHs}
Let $(X,Y)\in \R^2$ and consider
$$u_0(x)=u_{X,Y}(0,x)=\frac{2X}{1+|x|^2},\quad u_1(x)=\partial_tu_{X,Y}(0,x)=\frac{4Y}{(1+|x|^2)^2}.$$
Then:
\begin{align}
\label{eq:u0Lp}
    \text{for }X\ne 0,\ u_0\in L^p(\R^3)&\iff p>3/2,\\
\label{eq:u1Lp}    
    \text{for }Y\ne 0,\ u_1\in L^p(\R^3)&\iff p>3/4,\\
\label{eq:u0Hs}
    \text{for }X\ne 0,\ u_0\in \dot{H}^s(\R^3)&\iff s>-1/2,\\
\label{eq:u1Hs}
    \text{for }Y\ne 0,\ u_1\in \dot{H}^s(\R^3)&\iff s>-3/2.\\
\end{align}
\end{lemma}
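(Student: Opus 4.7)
The plan is to treat the $L^p$ and $\dot H^s$ statements separately, since they rely on very different ingredients. Both $u_0$ and $u_0$ are smooth, bounded, and radial, and their Fourier transforms admit explicit closed-form expressions.

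For \eqref{eq:u0Lp} and \eqref{eq:u1Lp}, since $u_0$ and $u_1$ are bounded, the only obstruction to $L^p$-membership is the decay at infinity. First I would observe that
\begin{equation*}
 |u_0(x)|\approx \frac{1}{|x|^2},\qquad |u_1(x)|\approx \frac{1}{|x|^4}\qquad\text{as }|x|\to\infty,
\end{equation*}
with matching pointwise lower bounds whenever $X\ne 0$ or $Y\ne 0$. In spherical coordinates, $|u_0|^p$ is integrable near infinity iff $2p-2>1$ and $|u_1|^p$ is integrable iff $4p-2>1$, which yields the thresholds $p>3/2$ and $p>3/4$ respectively. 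The iff form follows because the lower and upper bounds are both sharp.

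For the Sobolev statements \eqref{eq:u0Hs} and \eqref{eq:u1Hs}, the plan is to compute $\widehat{u_0}$ and $\widehat{u_1}$ in closed form and then read off finiteness of $\int|\xi|^{2s}|\widehat{u_j}(\xi)|^2\,d\xi$ by inspection. The key input is the classical identity on $\R^3$
\begin{equation*}
 \mathcal{F}\!\left[\frac{1}{a+|x|^2}\right](\xi)=\frac{2\pi^2 e^{-\sqrt{a}\,|\xi|}}{|\xi|},\qquad a>0,
\end{equation*}
which follows from the fact that $(-\Delta+a)^{-1}$ has convolution kernel $\frac{e^{-\sqrt a\,|x|}}{4\pi|x|}$ in three dimensions, combined with Fourier duality and a scaling argument in $a$. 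Setting $a=1$ gives $\widehat{u_0}(\xi)$ proportional to $|\xi|^{-1}e^{-|\xi|}$, while differentiating the identity in $a$ and setting $a=1$ gives $\widehat{u_1}(\xi)$ proportional to $e^{-|\xi|}$.

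Inserting these into the spherical-coordinates formula
\begin{equation*}
 \|u_j\|_{\dot H^s}^2\approx \int_0^{\infty}\! r^{2s}\,|\widehat{u_j}(r)|^2\,r^2\,dr,
\end{equation*}
the exponential factor handles integrability at infinity automatically, so only the behaviour near $r=0$ matters. For $u_0$ the integrand is $\approx r^{2s}$, finite iff $s>-1/2$; for $u_1$ it is $\approx r^{2s+2}$, finite iff $s>-3/2$. The iff statement follows because all constants are strictly positive when $X\ne 0$ or $Y\ne 0$. I do not anticipate any real obstacle here: the only nontrivial ingredient is invoking the Bessel-potential Fourier identity in the correct normalized form.
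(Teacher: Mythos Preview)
Your proposal is correct and follows essentially the same route as the paper: the $L^p$ claims are handled by the decay of $u_0,u_1$ at infinity, and the $\dot H^s$ claims by computing the explicit Fourier transforms $\widehat{u_0}(\xi)\propto|\xi|^{-1}e^{-|\xi|}$ and $\widehat{u_1}(\xi)\propto e^{-|\xi|}$ and checking integrability of $\rho^{2s+2}|\widehat{u_j}(\rho)|^2$ near $\rho=0$. The only cosmetic difference is that you obtain $\widehat{u_1}$ by differentiating the Bessel-potential identity in the parameter $a$, whereas the paper derives it by a second Fourier inversion (computing $\mathcal{F}[e^{-r}]$ directly); both yield the same exponential.
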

\begin{proof}
The assertions \eqref{eq:u0Lp} and \eqref{eq:u1Lp} follow directly from the definitions of $u_0$ and $u_1$. To prove \eqref{eq:u0Hs} and \eqref{eq:u1Hs}, we will compute the Fourier transform of $u_0$ and $u_1$, using the convention $\hat{f}(\xi)=\int e^{-ix\cdot \xi}f(x)dx$. We recall that when $f\in L^1(\R^3)$ is radial, we have, denoting $r=|x|$ and $\rho=|\xi|$
\begin{equation}
    \label{eq:T20}
    \hat{f}(\rho)=\frac{4\pi}{\rho}\int_0^{\infty}\sin(\rho r)f(r)rdr.
\end{equation}
As a consequence, by direct computation, the Fourier transform of $r^{-1}e^{-r}$ is $\frac{4\pi}{\rho^2+1}$ and, by the Fourier inversion formula, the Fourier transform of $\frac{1}{r^2+1}$ is $2\pi^2\frac{e^{-{\rho}}} {\rho}$. Recalling $\lVert f\rVert_{\dot{H}^s(\R^3)}^2\simeq \int_0^\infty \lvert\widehat{f}(\rho)\rvert^2\rho^{2s+2}\, d\rho$, the claim \eqref{eq:u0Hs} follows immediately.

Similarly, the Fourier transform of $e^{-r}$ is $\frac{4\pi}{(1+\rho^2)^2}$ and thus the Fourier transform of $\frac{1}{(1+r^2)^2}$ is $2\pi^2e^{-\rho}$. This yields \eqref{eq:u1Hs}.
\end{proof}

\section{Threshold solutions}\label{sec:threshold}
In this section, we prove Theorems \ref{thm:self0}, \ref{thm:self0'} and \ref{thm:self2} about the threshold solutions. In all the section, we consider a solution $u=u_{X,Y}$ with $Y=\beta(X)$ or $Y=-\beta(-X)$, so $u$ is asymptotically self-similar for future times $t\to+\infty$ (case (Threshold) in Theorem~\ref{thm:main}). We denote by $U=U_{X,Y}$ the corresponding solution of the Duffing equation \eqref{eq:DuffingIVP}, and we recall from the previous section that the maximal time of existence $T_+$ of $U$ is exactly $\pi$. 

\subsection{Pointwise estimates}
Throughout all of this section we will need the following preliminary estimates.

\begin{prop}[Self-similar behaviour at the threshold]\label{prop:self1}
 Let $(X,Y)\in \mathbb{R}^2$ with $Y=\beta(X)$ or $Y=-\beta(-X)$ and $u=u_{X,Y}$. Then there is a $C>0$ such that the following estimates hold for large $t>0$:
 
\noindent If $\lvert x\rvert\le t-1$,
  \begin{equation}\label{eq:self_similar_behavior}
 \left\lvert u(t, x) - \frac{\sqrt 2}{t}\right\rvert + (t-|x|) \left\lvert \partial_tu(t,x)+\frac{\sqrt{2}}{t^2}\right\rvert\le \frac{C}{t(t-\lvert x \rvert)^2}.
 \end{equation}
 If $|x|\ge t+1$
 \begin{gather}
 \label{eq:T72}
\left\lvert u(t,x)-\frac{2X}{|x|^2-t^2}-\frac{4 t Y}{(|x|^2-t^2)^2}\right\rvert \le  
\begin{cases}
    \frac{C}{(|x|-t)^3|x|}, & \text{ if }X\neq 0,\\
    \frac{C}{(|x|-t)^4|x|}+\frac{C t^2}{|x|^3(|x|-t)^3},& \text{ if }X=0,
\end{cases}  
\\
\label{eq:T72'}
\left\lvert \partial_tu(t,x)-\frac{4tX}{\left(|x|^2-t^2\right)^{2}} +\frac{4\left(|x|^2+3t^2\right)Y}{\left(|x|^2-t^2\right)^{3}}\right\rvert  \leq \frac{Ct}{(|x|-t)^{4}|x|^{2}}.
\end{gather}
If $-1\leq t-\lvert x\rvert\le 1$ 
\begin{equation}
    \label{eq:T71}
    \left\lvert u(t,x)\right\rvert +\left\lvert \partial_t u(t,x)\right\rvert \le \frac{C}{|t|}. 
\end{equation}
\end{prop}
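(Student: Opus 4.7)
The starting point is the factorization $u(t,x) = \Omega(t,|x|)\, U(s)$ from Lemma~\ref{lem:penrose}, with $s = s(t,|x|) := \arctan(t+|x|) + \arctan(t-|x|) \in (0,\pi)$ and $\Omega(t,r) = 2\bigl[\sqrt{1+(t+r)^2}\,\sqrt{1+(t-r)^2}\bigr]^{-1}$. Because $(X,Y)$ is a threshold pair, $T_+ = \pi$, so $U$ blows up exactly at $s=\pi$ and Lemma~\ref{lem:asympt_duffing} provides its asymptotics there. The three regions in the proposition correspond to three regimes of $s$: for $|x|\le t-1$ one has $s\to\pi$; for $|x|\ge t+1$ one has $s\in(0,\pi/2)$ with $s\to 0$ as $|x|/t\to\infty$; and for $|t-|x||\le 1$, $s$ stays in a compact subinterval of $(0,\pi)$ bounded away from both endpoints.

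\emph{Interior case} ($|x|\le t-1$). Set $\eta := \pi - s$. Using $\pi/2 - \arctan(y) = \arctan(1/y)$ for $y>0$, one has $\eta = \arctan\bigl(1/(t+|x|)\bigr) + \arctan\bigl(1/(t-|x|)\bigr)$, so $\eta \lesssim 1/(t-|x|)$, and the Taylor expansion of $\arctan$ gives $\eta = 2t/(t^2-|x|^2) + O((t-|x|)^{-3})$. Lemma~\ref{lem:asympt_duffing} then yields $U(s) = \sqrt 2/\eta + O(\eta)$ and $\dot U(s) = \sqrt 2/\eta^2 + O(1)$, where we choose the sign $+\sqrt 2$ for definiteness; the opposite branch is reduced to this one by the symmetry $(X,Y)\mapsto(-X,-Y)$, $U\mapsto -U$. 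Crucially, the identity $\sin s = \Omega t$ from~\eqref{eq:inverse_Penrose} becomes $\Omega = \sin(\eta)/t$, so
\begin{equation*}
\Omega\,U(s) \;=\; \frac{\sin\eta}{t}\Bigl(\frac{\sqrt 2}{\eta} + O(\eta)\Bigr) \;=\; \frac{\sqrt 2}{t}\bigl(1 + O(\eta^2)\bigr) + O(\eta^2/t) \;=\; \frac{\sqrt 2}{t} + O\!\Bigl(\frac{1}{t(t-|x|)^2}\Bigr),
\end{equation*}
which is the $u$-part of~\eqref{eq:self_similar_behavior}. For $\partial_t u$, substitute $\Omega = \sin(\eta)/t$ and $\partial_t\eta = -\partial_t s$ into the chain rule to get
\begin{equation*}
\partial_t u \;=\; -\frac{\partial_t s}{t}\bigl[\cos\eta\cdot U(s) - \sin\eta\cdot\dot U(s)\bigr] - \frac{\sin\eta}{t^2}\,U(s).
\end{equation*}
The Taylor expansions of $\cos\eta,\sin\eta,U,\dot U$ show that the bracket is $O(\eta)$, while $-\sin(\eta)\,U(s)/t^2 = -\sqrt 2/t^2 + O(\eta^2/t^2)$. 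Combined with $\partial_t s = (1+(t+|x|)^2)^{-1} + (1+(t-|x|)^2)^{-1} \lesssim (t-|x|)^{-2}$, this produces $\partial_t u = -\sqrt 2/t^2 + O\bigl((t(t-|x|)^3)^{-1}\bigr)$, completing~\eqref{eq:self_similar_behavior}.

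\emph{Exterior case} ($|x|\ge t+1$). Writing $A = t+|x|, B = |x|-t\ge 1$, the arctan addition formula gives $s = \arctan\bigl(2t/(1+AB)\bigr) \in (0,\pi/2)$. Since $s$ stays bounded away from $\pi$, the Taylor expansion $U(s) = X + Ys + \tfrac12(X^3-X)s^2 + O(s^3)$ is uniformly valid. The elementary expansions at large $A, B$ yield $s = 2t/(AB) + O\bigl(t/(AB\,B^2) + t^3/(AB)^3\bigr)$ and $\Omega = 2/(AB) + O\bigl((A^2+B^2)/(AB)^3\bigr)$, so that multiplication gives $u = 2X/(AB) + 4tY/(AB)^2 + (\text{errors})$. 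Using $AB = |x|^2-t^2$ and $A+B = 2|x|$, the errors combine into the bound in~\eqref{eq:T72} when $X\ne 0$. The case $X=0$ uses $\ddot U(0) = X^3-X = 0$, so $U(s) = Ys + O(s^3)$; re-examining $\Omega\cdot Ys$ to one higher order produces the sharper alternative remainder in~\eqref{eq:T72}. The derivative estimate~\eqref{eq:T72'} is obtained by the same recipe applied to $\partial_t u = \partial_t\Omega\cdot U + \Omega\,\dot U\,\partial_t s$.

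\emph{Near the cone, and main obstacle.} For large $t$ with $|t-|x||\le 1$, $s$ lies in a compact subinterval of $(0,\pi)$, so $U(s),\dot U(s)$ are uniformly bounded. Since $\sqrt{1+(t+|x|)^2}\gtrsim t$ while $\sqrt{1+(t-|x|)^2}$ is bounded, $\Omega\lesssim 1/t$; likewise $\partial_t\Omega\lesssim 1/t$ and $\partial_t s\lesssim 1$ from the explicit formulas, giving $|u|,|\partial_t u|\lesssim 1/t$ as claimed in~\eqref{eq:T71}. The one genuine delicacy in the whole proof is the cancellation of the $\eta^{-1}$ and $\eta^{-2}$ singularities in $\partial_t u$ in the interior case; the identity $\sin s = \Omega t$, differentiated in $t$, makes this cancellation transparent and yields the sharp $(t-|x|)^{-3}$ rate. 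The remaining steps reduce to routine, though bookkeeping-intensive, Taylor expansions.
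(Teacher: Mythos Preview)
Your proof is correct and handles all three regions soundly. The exterior and near-cone cases follow the paper's argument essentially verbatim: Taylor-expand $U$ at $s=0$, expand $\Omega$ and $s$ in powers of $1/(|x|\pm t)$, and multiply (respectively, use boundedness of $U,\dot U$ on a compact subinterval of $(0,\pi)$).

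The interior case is where your route genuinely differs. The paper expands $\Omega$ and $U(s)$ separately---writing $\Omega = 2(t^2-r^2)^{-1}(1+O((t-r)^{-2}))$ and $U(s)=\sqrt2(t^2-r^2)/(2t)+O((t-r)^{-1})$---and for $\partial_t u$ computes the two pieces $\partial_t\Omega\,U$ and $\Omega\,\partial_t s\,\dot U$ individually, each of which is of size $\sim(t^2-r^2)^{-1}$; the cancellation down to $-\sqrt2/t^2$ only appears after adding the leading terms. You instead exploit the exact Penrose identity $\sin s=\Omega t$ to write $\Omega=\sin\eta/t$, so that $u=\Omega U=(\sin\eta/\eta)(\sqrt2/t)+O(\eta^2/t)$ and, after differentiating, the singular contributions in $\partial_t u$ assemble into the single bracket $\cos\eta\,U-\sin\eta\,\dot U=O(\eta)$. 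This makes the cancellation structural rather than arithmetic, and yields the sharp $(t-|x|)^{-3}$ remainder with no intermediate large terms. The paper's approach is more hands-on but requires tracking two quantities of size $(t^2-r^2)^{-1}$ that must cancel; yours hides this inside a trigonometric identity, at the cost of being slightly less explicit about the constants involved. Both are valid; yours is cleaner for this particular step.
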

\begin{remark}\label{rem:asympt_self_similar_past}
    If $u=u_{X, Y}$ is asymptotically self-similar for past times $t\to -\infty$ (as in the (Threshold) case of Remark~\ref{rem:neg_times}), then $u=-v$ for a solution $v$ that is asymptotically self-similar for future times, as in Proposition~\ref{prop:self1}. Using this it is straightforward to derive the result analogous to Proposition~\ref{prop:self1} for past times, and we omit further details.
\end{remark}
\begin{proof}[Proof of Proposition \ref{prop:self1}]
\noindent\textbf{Proof of \eqref{eq:self_similar_behavior}}.
\noindent In what follows, $t$ is large with $t\geq r+1$, and the symbols $O(\cdot)$ have to be understood in this region. We will use:
\begin{equation}
    \label{eq:T73}
    \arctan \ell=\frac{\pi}{2}-\frac{1}{\ell}+O\left(\frac{1}{\ell^3}\right),\quad \ell\to\infty.
\end{equation}
We recall that $u(t,r)=\Omega(t,r)U(s)$, where 
\begin{equation}
\label{eq:T200}
    s=\arctan(t+r)+\arctan(t-r)=\pi-\frac{2t}{t^2-r^2}+O\left(\frac{1}{(t-r)^3}\right),
\end{equation}
and $\Omega=2\left(1+(t-r)^2\right)^{-1/2}\left(1+(t+r)^2\right)^{-1/2}$. 
By direct computations, 
\begin{equation}
    \label{eq:T202}
    \Omega=2(t^2-r^2)^{-1}\left( 1+O\left((t-r)^{-2}\right)\right).
\end{equation}
Furthermore, by \eqref{eq:T200} and the asymptotic $U(s)=\sqrt{2}(\pi-s)^{-1}+O(\pi-s)$ of Lemma~\ref{lem:asympt_duffing},
\begin{equation}
\label{eq:U}
U(s)=U\left(\pi-\frac{2t}{t^2-r^2}+O\left(\frac{1}{(t-r)^3}\right)\right)=\frac{\sqrt{2}(t^2-r^2)}{2t}+O\left(\frac{1}{t-r}\right).
\end{equation}
Combining \eqref{eq:T202} and \eqref{eq:U}, we obtain the estimate on $u$ in \eqref{eq:self_similar_behavior}. 

The proof of the estimate on $\partial_tu$ is similar. We have
\begin{equation}
    \label{eq:T191}
    \partial_t u=\partial_t\Omega U(s)+\Omega \frac{\partial s}{\partial t}\dot{U}(s).
\end{equation}
Differentiating the definition of $s$, we obtain
\begin{equation}
    \label{eq:T201}
    \frac{d s}{dt}=\frac{2(t^2+r^2)}{\left(t^2-r^2\right)^2}+O\left(\frac{1}{(t-r)^4}\right).
\end{equation}
Furthermore, by \eqref{eq:T53},
\begin{equation}
\label{eq:T220}
\dot{U}(s)=\dot{U}\left(\pi -\frac{2t}{t^2-r^2}+O\left(\frac{1}{(t-r)^3}\right)\right)=\frac{\sqrt 2 (t^2-r^2)^2}{4 t^2}+O(1).
\end{equation}
Combining \eqref{eq:T220} with \eqref{eq:T202} and \eqref{eq:T201} we obtain
\begin{equation}
    \label{eq:T230}
    \Omega \frac{\partial s}{\partial t}\dot{U}(s)=\frac{\sqrt{2}(t^2+r^2)}{t^2(t^2-r^2)} \left(1+O\left((t-r)^{-2}\right)\right).
\end{equation}
Differentiating $\log \Omega$, we obtain 
\begin{equation}
\label{eq:T203}    
\frac{\partial\Omega}{\partial t}=-\frac{t}{2} \Omega^3(1+t^2-r^2),
\end{equation}
and thus, combining with \eqref{eq:T202}, 
$\frac{\partial \Omega}{\partial t}=-4t\left(t^2-r^2\right)^{-2}\left(1+O\left((t-r)^{-2}\right)\right).$
Hence, by \eqref{eq:U},
\begin{equation}
    \label{eq:T211}
    U(s)\partial_t\Omega=-2\sqrt{2}(t^2-r^2)^{-1}\left(1+O\left((t-r)^{-2}\right)\right).
\end{equation}
Combining \eqref{eq:T191}, \eqref{eq:T230} and \eqref{eq:T211} we deduce the estimate on $\partial_tu$ in \eqref{eq:self_similar_behavior}.

\medskip

\noindent\textbf{Proofs of \eqref{eq:T72} and \eqref{eq:T72'}}.

We next assume $r\geq 1+t\gg 1.$
In this region, we have
\begin{equation}
\label{eq:T90}
    \Omega=2\left(r^2-t^2\right)^{-1}\left(1+O\left((r-t)^{-2}\right)\right).
\end{equation}
Furthermore,
by \eqref{eq:T73},
\begin{equation}
\label{eq:sbis}
    s=\frac{2t}{r^2-t^2}+O\left(\frac{1}{(r-t)^3}\right).
\end{equation}
As a consequence, recalling that $X=U(0)$ and $Y=\dot{U}(0)$ and using $U(s)=X+sY+O(s^2)$ for $|s|\leq 1$, we obtain
\begin{equation}
\label{eq:Ubis}
    U\left(s\right)=X+\frac{2t}{r^2-t^2}Y+O\left(\frac{t^2}{(r^2-t^2)^2}\right).
\end{equation}
Since $u(t,r)=\Omega U(s)$, we  deduce \eqref{eq:T72} from \eqref{eq:T90} and \eqref{eq:Ubis}

Also, still assuming $r\geq 1+t\gg 1$, we have, using \eqref{eq:T203},
\begin{equation}
    \label{eq:T242} \partial_t\Omega=\frac{4 t}{(r^2-t^2)^{2}}\left(1+O\left(\frac{1}{(r-t)^{2}}\right)\right),\quad \partial_t s=\frac{2(r^2+t^2)}{\left(r^2-t^2\right)^2}+O\left(\frac{1}{(r-t)^4}\right).
\end{equation}
Furthermore, by \eqref{eq:sbis}, we have
\begin{equation}
\label{U'bis}
\dot{U}(s)=Y+O(s)=Y+O\left(\frac{t}{(r-t)r}\right).
\end{equation}
By \eqref{eq:Ubis} and \eqref{eq:T242}, we have
\begin{equation*}
\partial_t \Omega U(s)=4t(r^2-t^2)^{-2}    X+8t^2(r^2-t^2)^{-3}Y+O\left(t(r-t)^{-4}r^{-2}\right).
\end{equation*}
By \eqref{eq:T90}, \eqref{eq:T242} and \eqref{U'bis}, 
$$ \Omega \frac{\partial s}{\partial t} \dot{U}(s)=4(r^2+t^2)\left(r^2-t^2\right)^{-3}Y +O\left((r-t)^{-5}r^{-1}\right)+O\left((r-t)^{-4}tr^{-2}\right).$$
The two last estimates yield \eqref{eq:T72'}.

\medskip

\noindent\textbf{Proof of \eqref{eq:T71}.} 

Since for $r-1\leq t\leq r+1$, $t\geq 0$, we have $0\leq s\leq \frac{\pi}{2}+\arctan 1=\frac{3\pi}{4}<\pi$, we see that $U(s)$ and $\dot{U}(s)$ remain bounded in this region. Also, we have
$$\Omega=O\left(\frac{1}{t\left\langle t-r\right\rangle^{\frac 12}}\right),\; \partial_t\Omega=O\left(\frac{1}{t\langle t-r\rangle^{3}}\right),\; \frac{\partial s}{\partial t}= O\left(1+\frac{1}{\langle t-r\rangle^2}\right),$$
where by definition $\langle y\rangle:=\sqrt{1+\lvert y\rvert^2}$, 
Recalling that $u=\Omega U(s)$ and the formula \eqref{eq:T191} for $\partial_tu$, we obtain the estimate \eqref{eq:T71}.
\end{proof}

\subsection{Asymptotics of Lebesgue norms}\label{sec:threshold_lebesgue}
Here we prove the claims~\eqref{eq:asymptoticLp} and~\eqref{eq:boundLp} on the $L^p$ norms of $u$ in Theorem~\ref{thm:self2}. We assume $p>3/2$, and $t\gg 1$. By \eqref{eq:T71},
\begin{equation}
    \label{eq:T110}
    \int_{t-1}^{t+1} |u(t,r)|^pr^2dr\lesssim t^{2-p}.
\end{equation}
Next, we see by \eqref{eq:T72} and the change of variable $\sigma=r/t$, that
$$\int_{t+1}^{\infty}|u(t,r)|^pr^2dr\lesssim \int_{t+1}^{\infty}\frac{1}{(r^2-t^2)^p}r^2dr\lesssim t^{3-2p} \int_{1+\frac{1}{t}}^\infty\frac{\sigma^2}{(\sigma^2-1)^p}d\sigma.$$
Since (using that $p>3/2$), 
$\int_{1+\frac{1}{t}}^\infty\frac{\sigma^2}{(\sigma^2-1)^p}d\sigma=O(t^{p-1}),$
we deduce
\begin{equation*}
    \int_{t+1}^{\infty} |u(t,r)|^pr^2dr=O(t^{2-p}).
\end{equation*}
Combining with \eqref{eq:T110}, we obtain \eqref{eq:boundLp}. To prove \eqref{eq:asymptoticLp}, we use \eqref{eq:self_similar_behavior}, which yields
\begin{multline*}
    \int_0^{t-1}|u(t,r)|^pr^2dr=\int_0^{t-1} \left|\frac{2}{t}+O\left(\frac{1}{t(t-r)^2}\right)\right|^pr^2dr\\
    =2^{\frac{p}{2}} \int_0^{t-1} \frac{1}{t^p}r^2dr+O\left(\int_0^{t-1}\frac{r^2}{t^p(t-r)^2}dr\right).
\end{multline*}
Computing the first integral explicitly and noting that $\int_0^{t-1}\frac{r^2}{(t-r)^2}dr=O(t^2)$ as $t\to\infty$, we obtain
\begin{equation}
    \label{eq:T130}
    \int_0^{t-1} |u(t,r)|^pr^2dr=\frac{2^{\frac{p}{2}}}{3}t^{3-p}+O(t^{2-p}), \quad t\to\infty,
\end{equation}
which, combined with \eqref{eq:boundLp} yields \eqref{eq:asymptoticLp}.

\subsection{Pointwise bounds on the derivatives}

\label{sub:rough}

We now give rough bounds of the derivatives of $u$, some of which will be needed to prove the remaining statements~\eqref{eq:asymptoticdtLp}, \eqref{eq:asymptoticHs}, \eqref{eq:asymptoticH1/2} and~\eqref{eq:asymptoticHs'}, and so finish the proof of Theorem \ref{thm:self2}.
\begin{prop}
\label{P:rough}
For $\ell_1+\ell_2\geq 1$ 
\begin{equation}
\label{superbound4'}
\left| \partial_t^{\ell_1}\partial_r^{\ell_2}u(t,r)\right|\lesssim \frac{1}{\langle r+t\rangle \langle r-t\rangle^{1+\ell_1+\ell_2}}.
\end{equation}
and, if furthermore $p\geq 1$, $(p,\ell_1+\ell_2)\neq (1,1)$,
\begin{equation}
    \label{boundtlLp}
\left\|\partial_t^{\ell_1}\partial_r^{\ell_2}u(t,r)\right\|_{L^p}\lesssim t^{\frac{2}{p}-1}.
\end{equation}
\end{prop}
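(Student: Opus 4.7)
The strategy is to exploit the conformal representation $u(t,r)=\Omega(t,r)\,U(s(t,r))$ from Lemma~\ref{lem:penrose} and work in null coordinates $a=t+r$, $b=t-r$. In these variables,
\begin{equation*}
\Omega=\frac{\sqrt 2}{\langle a\rangle}\cdot\frac{\sqrt 2}{\langle b\rangle},\qquad s=\arctan a+\arctan b,
\end{equation*}
so $\Omega$ factors as a product of a function of $a$ and a function of $b$, while $s$ decouples as a sum; in particular $\partial_a\partial_b s=0$. Since $\partial_t$ and $\partial_r$ are linear combinations of $\partial_a,\partial_b$, it suffices to bound $\partial_a^{\,j}\partial_b^{\,k}u$ for $j+k=\ell_1+\ell_2$. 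Elementary computations give
\begin{equation*}
|\partial_a^{\,j}\partial_b^{\,k}\Omega|\lesssim \langle a\rangle^{-1-j}\langle b\rangle^{-1-k},\qquad |F^{(m)}(\alpha)|\lesssim \langle\alpha\rangle^{-1-m}\quad\text{for }F=\arctan,\ m\ge 1.
\end{equation*}

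The core step is to control $\partial_a^{\,j}\partial_b^{\,k}U(s)$. Thanks to the decoupling $\partial_a\partial_b s=0$, the bivariate Fa\`a di Bruno formula reduces to independent univariate Fa\`a di Bruno formulas in $a$ and $b$, expressing $\partial_a^{\,j}\partial_b^{\,k}U(s)$ as a sum over pairs of partitions $(\sigma,\tau)$ of $\{1,\dots,j\}$ and $\{1,\dots,k\}$ of terms of the form
\begin{equation*}
U^{(|\sigma|+|\tau|)}(s)\prod_{B\in\sigma} F^{(|B|)}(a)\prod_{B'\in\tau}F^{(|B'|)}(b).
\end{equation*}
The singular factor $|U^{(m)}(s)|$ is controlled by Lemma~\ref{lem:asympt_duffing}, which gives $|U^{(m)}(s)|\lesssim (\pi-s)^{-1-m}$ near $s=\pi$. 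The crucial balancing estimate is $\pi-s=\arctan(1/a)+\arctan(1/b)\gtrsim \langle t-r\rangle^{-1}$ inside the forward light cone, which yields $|U^{(m)}(s)|\lesssim \langle t-r\rangle^{1+m}$ there; in the exterior $\{r>t\}$ the argument $s$ stays in a compact subset of $(T_-,\pi)$, so $|U^{(m)}(s)|\lesssim 1$ uniformly. Plugging these bounds into the Fa\`a di Bruno and Leibniz expansions, the growth of $|U^{(m)}(s)|$ as $s\to\pi$ is exactly absorbed by the decay of the $F^{(|B|)}$ factors, and summing the contributions yields the pointwise estimate~\eqref{superbound4'}.

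For the $L^p$ estimate, we pass to spherical coordinates and change variable to $\rho=|t-r|$ separately in the interior $\{r\le t\}$ and the exterior $\{r>t\}$:
\begin{equation*}
\lVert\partial_t^{\ell_1}\partial_r^{\ell_2}u(t)\rVert_{L^p}^p\lesssim \int_0^t\frac{(t-\rho)^2\,d\rho}{t^p(1+\rho)^{p(1+\ell_1+\ell_2)}}+\int_0^\infty\frac{(t+\rho)^2\,d\rho}{(2t+\rho)^p(1+\rho)^{p(1+\ell_1+\ell_2)}}.
\end{equation*}
The interior integral is $\lesssim t^{2-p}$ as long as $p(1+\ell_1+\ell_2)>1$, which holds for all $p\ge 1$ and $\ell_1+\ell_2\ge 1$. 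For the exterior, splitting the integration at $\rho=t$, the tail contribution behaves as $\int_t^\infty\rho^{2-p(2+\ell_1+\ell_2)}\,d\rho$, convergent if and only if $p(2+\ell_1+\ell_2)>3$; this is precisely why the case $(p,\ell_1+\ell_2)=(1,1)$ must be excluded. In all remaining cases both integrals are $\lesssim t^{2-p}$, and~\eqref{boundtlLp} follows by taking $p$-th roots.

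The most delicate point is the control of $\partial_a^{\,j}\partial_b^{\,k}U(s)$ near the light cone, where $|U^{(m)}(s)|$ grows like $\langle t-r\rangle^{1+m}$. The entire argument rests on the precise matching $\pi-s\approx\langle t-r\rangle^{-1}$, which allows this growth to be absorbed by the decay of the derivatives of $s$; without it the Fa\`a di Bruno terms would diverge as $s\to\pi$. Once this balancing is secured, the subsequent integration is routine, and the logarithmic divergence of the exterior tail integral accounts directly for the excluded case $(p,\ell_1+\ell_2)=(1,1)$.
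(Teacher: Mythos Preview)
Your approach is essentially the paper's own: Leibniz on $u=\Omega\,U(s)$, Fa\`a di Bruno for the composition $U(s)$, the bound $|U^{(m)}(s)|\lesssim(\pi-s)^{-1-m}$ from Lemma~\ref{lem:asympt_duffing}, and the key comparison $\pi-s\gtrsim\langle t-r\rangle^{-1}$ inside the cone. Casting it in null coordinates $(a,b)=(t+r,t-r)$ is a clean cosmetic choice (since $\partial_a\partial_b s=0$ the bivariate Fa\`a di Bruno factors), but the substance is identical.

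One word of caution on the phrase ``exactly absorbed''. If you actually carry out the power count, a generic Fa\`a di Bruno term contributes
\[
|U^{(|\sigma|+|\tau|)}(s)|\,\langle a\rangle^{-|\sigma|-j'}\langle b\rangle^{-|\tau|-k'}
\lesssim \langle b\rangle^{\,1+|\sigma|+|\tau|}\,\langle a\rangle^{-|\sigma|-j'}\langle b\rangle^{-|\tau|-k'}
\le \langle a\rangle^{-j'}\langle b\rangle^{\,1-k'},
\]
using $\langle b\rangle\le\langle a\rangle$. After multiplying by $|\partial_a^{j-j'}\partial_b^{k-k'}\Omega|\lesssim\langle a\rangle^{-1-(j-j')}\langle b\rangle^{-1-(k-k')}$ and summing, one obtains $|\partial_a^j\partial_b^k u|\lesssim\langle a\rangle^{-1-j}\langle b\rangle^{-k}$, hence $|\partial_t^{\ell_1}\partial_r^{\ell_2}u|\lesssim\langle r+t\rangle^{-1}\langle r-t\rangle^{-(\ell_1+\ell_2)}$, which is one power of $\langle r-t\rangle^{-1}$ short of~\eqref{superbound4'}. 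The paper's proof has the very same slip (it writes $|U^{(m)}(s)|\lesssim(t-r)^m$ where Lemma~\ref{lem:asympt_duffing} only gives $(t-r)^{m+1}$); and indeed the stated bound cannot hold as written, since $\partial_t u(t,0)\sim-\sqrt 2/t^2$ by~\eqref{eq:self_similar_behavior}, not $O(t^{-3})$. This does not affect your $L^p$ argument: the weaker pointwise bound still integrates to $t^{2/p-1}$ for $\ell_1+\ell_2\ge 2$, and for $\ell_1+\ell_2=1$ the interior contribution $\int_0^{t-1}t^{-2p}r^2\,dr\lesssim t^{3-2p}\le t^{2-p}$ is harmless for $p\ge1$, so~\eqref{boundtlLp} survives with the same exclusion $(p,\ell_1+\ell_2)\neq(1,1)$.
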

\begin{proof}[Proof of Proposition~\ref{P:rough}]

\noindent \textbf{Proof of \eqref{superbound4'}.} To simplify the exposition, we focus on the derivatives with respect to $r$. The bounds are exactly the sames for the derivatives with respect to $t$, or combination of the two types of derivatives.
Since $u=\Omega U(s)$ of $u$, we obtain, for $\ell\geq 1$
\begin{equation}
    \label{productrule}
 \partial_r^{\ell}u=\sum_{k=0}^{\ell} \binom{\ell}{k} \partial_r^{\ell-k}\Omega \,\partial_r^{k}U(s).
\end{equation}
The $k$th derivative of $U$, $\partial_r^k U(s)$ is a linear combination of terms of the form $\prod_{i=1}^m \partial_r^{k_i}s U^{(m)}(s)$, where $1\leq m\leq k$, $k_i\geq 1$, $\sum_{i=1}^mk_i=k$.
For $j\geq 1$, we have
$$\partial^j_r s=\partial_r^{j-1} \left(\frac{1}{1+(r+t)^2}\right)-\partial_r^{j-1} \left(\frac{1}{1+(r-t)^2}\right),$$
and thus
\begin{equation}
    \label{superbound1}
    \left|\partial_r^js\right|\lesssim \frac{1}{(1+|r-t|)^{j+1}}.
\end{equation}
On the other hand, it is easy to check that
\begin{equation*} 
\begin{array}{cc} 
    \displaystyle s=\arctan (t+r)+\arctan(t-r)\leq \pi-\frac{1}{2(t-r)}, & \text{for }t-r\geq 1.
\end{array}
\end{equation*}
Using that all the derivatives of $U$ are bounded on $[0,\pi-1]$ and Lemma \ref{lem:asympt_duffing}, we deduce, for $m\geq 1$, 
$$ \left|U^{(m)}(s)\right|\lesssim (t-r)^m\indic_{\{t\geq r\}}+1.$$
Combining with \eqref{superbound1}, we deduce
$$
\left|\partial_r^k\left(U(s)\right)\right|\lesssim \sum_{m=1}^k \frac{1}{(1+|r-t|)^{m+k}}\left(|t-r|^m+1\right)\lesssim \frac{1}{\langle r-t\rangle^k}.  
$$
Furthermore, using the definition of $\Omega$, we see that 
$\left|\partial_r^j\Omega\right|\lesssim \frac{1}{\langle r+t\rangle \langle t-r\rangle^{j+1}}$.
Going back to \eqref{productrule}, we obtain \eqref{superbound4'} when $\ell_1=0$, $\ell_2\geq 1$. The same proof yields \eqref{superbound4'} when $\ell_1+\ell_2\geq 1$.

\noindent\textbf{Proof of \eqref{boundtlLp}}. We let $p\geq 1$, and let $(\ell_1,\ell_2)$ be two integers with $\ell=\ell_1+\ell_2\geq 1$. Then by \eqref{superbound4'},
\begin{equation*}
    \int_0^{\infty} \left|\partial_t^{\ell_1}\partial_r^{\ell_2}u(t,r)\right|^pr^2dr\lesssim \int_0^{\infty}\frac{1}{\langle r+t\rangle^p\langle r-t\rangle^{(1+\ell)p}} r^2dr.
\end{equation*}
Assuming $(p,\ell)\neq (1,1)$, we have $p>\frac{3}{2+\ell}$, so that the preceding integral is finite. The estimate \eqref{boundtlLp} then follows from the fact that for large $t$,
\begin{equation}
    \label{superbound5}
    \int_0^{\infty}\frac{1}{\langle r+t\rangle^p\langle r-t\rangle^{(1+\ell)p}} r^2dr\lesssim t^{2-p},
\end{equation}
which can be proved by estimating separately the integrals $\int_0^{t-1}$, $\int_{t-1}^{t+1}$ and $\int_{t+1}^{\infty}$.
\end{proof}

The upper bound in \eqref{eq:asymptoticdtLp} is a consequence of Proposition \ref{P:rough} if $p>1$. So we turn to the upper bound in \eqref{eq:asymptoticdtLp} when $p=1$, that is
\begin{equation}
\label{eq:upperdtLp}
\|\partial_tu(t)\|_{L^1}=O(t).    
\end{equation}
From \eqref{superbound4'} we obtain immediately
\begin{equation}
    \label{eq:dtLp1}
    \int_{0}^{t+1}|\partial_tu(t,r)|r^2dr\lesssim t
\end{equation}
Also, noting that if $b>\max(a+1,1)$, we have
$\int_{t+1}^{\infty}\frac{r^a}{(r-t)^b}dr\lesssim t^a$, we can prove, bounding separately the terms 
$$\int_{t+1}^\infty \frac{t}{\left(|x|^2-t^2\right)^{2}}r^2dr,\quad \int_{t+1}^{\infty} \frac{r^{2}}{\left(|x|^2-t^2\right)^{3}} r^2dr,\quad \int_{t+1}^\infty \frac{t}{(r-t)^{4}r^{2}}r^2dr.
$$
arising from \eqref{eq:T72'}, we obtain
\begin{equation}
    \label{eq:dtLp2}
    \int_{t+1}^{\infty}|\partial_tu(t,r)|r^2dr\lesssim t,
\end{equation}
which yields \eqref{eq:upperdtLp}. The lower bound in \eqref{eq:asymptoticdtLp} will be proved in Subsection \ref{sub:radiation}.

\subsection{Asymptotics of Sobolev norms}\label{sec:threshold_sobolev}
We prove here the parts of the statements \eqref{eq:asymptoticHs}, \eqref{eq:asymptoticH1/2} and \eqref{eq:asymptoticHs'} in Theorem \ref{thm:self2} that concern the Sobolev norms of $u$. The remaining parts, concerning the Sobolev norms of $\partial_t u$, will be proved in the next subsection. 

By conservation of the energy, 
$$\frac12\|\boldsymbol{u}(t)\|_{\mathcal H^1}^2=\frac{1}{2}\|\nabla u(t)\|^2_{L^2}+\frac{1}{2}\left\|\partial_tu\right\|_{L^2}^2=E(u_0,u_1)+\frac 14\|u(t)\|_{L^4}^4.$$
Combining with \eqref{eq:asymptoticLp} with $p=4$, we obtain
\begin{equation}
    \label{eq:T142}
    \|\boldsymbol{u}(t)\|^2_{\mathcal{H}^1} =2E(u_0,u_1)+O(t^{-1}),\quad t\to\infty.
\end{equation}

Let $\nu\in [0,1]$. We have 
$$\|u(t)\|^2_{\dot{H}^{\nu}}=4\pi \int_0^{\infty}|\hat{u}(t,\rho)|^2\rho^{2+2\nu}d\rho, $$
and moreover
$$\int_1^{\infty} |\hat{u}(t,\rho)|^2\rho^{2+2\nu}d\rho\leq \int_1^{\infty}|\hat{u}(t,\rho)|^2\rho^4d\rho,$$
thus by \eqref{eq:T142},
\begin{equation}
    \label{eq:T150}
    \limsup_{t\to\infty} \int_1^{\infty}|\hat{u}(t,\rho)|^2\rho^{2+2\nu}d\rho<\infty.
\end{equation}
We decompose 
$$u=\underbrace{u \indic_{r\leq t-1}}_{u_{<}}+\underbrace{u\indic_{r>t-1}}_{u_>}.$$
By \eqref{eq:boundLp}, $\|u_{>}(t)\|_{L^2}=O(1)$. As a consequence, as $t\to\infty$
\begin{equation}
    \label{eq:T151}
    \int_0^{1}|\widehat{u_>}(t,\rho)|^2\rho^{2+2\nu}d\rho \leq \int_0^1\left|\widehat{u_>}(t,\rho)\right|^2\rho^2d\rho=O(1).
\end{equation}
On the other hand, by the formula \eqref{eq:T20} for the radial Fourier transform, then \eqref{eq:self_similar_behavior},
\begin{equation}
    \widehat{u_<}(t,\rho)=\frac{4\pi}{\rho}\int_0^{t-1}\sin(\rho r)u(t,r)rdr=\frac{4\pi}{\rho}\int_0^{t-1}\sin(\rho r)\frac{\sqrt{2} r}{t}dr+O\left(\frac{1}{\rho}
\int_0^{t-1}\frac{r}{t(t-r)^2}dr\right).
\end{equation}
By the change of variable $t=t\sigma$, $\int_0^{t-1}\frac{r}{t(t-r)^2}dr=O(1)$ as $t\to\infty$. On the other hand, by direct computations
$$\int_0^{t-1}\sin(\rho r) rdr=\frac{1}{\rho^2}\sin(\rho(t-1))-\frac{t-1}{\rho}\cos(\rho(t-1)).$$
Thus 
\begin{equation}
    \label{eq:T160}
    \widehat{u_<}(t,\rho)=\frac{4\sqrt{2}\pi}{\rho^3t}\left(\sin(\rho(t-1))-\rho(t-1)\cos(\rho(t-1))\right)+\frac{1}{\rho}O(1),
\end{equation}
where $O(1)$ is uniform for $t\gg 1$ and $\rho\in [0,1]$. 

We also have, by the change of variable $\sigma=\rho(t-1)$
\begin{multline}
    \label{eq:T161}
    \int_0^1\frac{1}{\rho^6}\left( \sin(\rho(t-1))-\rho(t-1) \cos(\rho(t-1))\right)^2 \rho^{2+2\nu}d\rho\\
    =\frac{1}{(t-1)^{2\nu-3}}\int_0^{t-1}(\sin \sigma -\sigma\cos \sigma)^2\sigma^{-4+2\nu}d\sigma.
\end{multline}

\medskip

\noindent\emph{Case 1: $\frac{1}{2}<\nu \leq 1$}
$$\int_0^{t-1} (\sin \sigma-\sigma\cos\sigma)^2\sigma^{-4+2\nu}d\sigma\leq \int_0^{t-1}(1+\sigma)^2\sigma^{-4+2\nu}d\sigma\lesssim t^{2\nu-1}.$$
Thus 
$$\int_0^{1}\frac{1}{\rho^6}\left(\sin(\rho(t-1))-\rho(t-1)\cos(\rho(t-1))\right)^2\rho^{2+2\nu}d\rho\lesssim t^2.$$
In view of \eqref{eq:T160}, we deduce
\begin{equation}
    \label{eq:T170}
    \int_0^1 \left|\widehat{u_{<}}(t,\rho)\right|^2\rho^{2+2\nu}d\rho=O(1),\quad t\to\infty.
\end{equation}
By \eqref{eq:T150}, \eqref{eq:T151} and \eqref{eq:T170},
\begin{equation}
    \label{eq:T171}
    \frac 12<\nu\leq 1\Longrightarrow \|u(t)\|_{\dot{H}^{\nu}}=O(1),\quad t\to\infty.
\end{equation}
Combining with \eqref{boundtlLp}, we see that for any $\nu>1/2$, $\|u(t)\|_{\dot{H}^{\nu}}=O(1)$. We will prove below (see Remark \ref{rem:nonradiative}) that $1\lesssim \|u(t)\|_{\dot{H}^{1}}$, which, by a simple interpolation argument, will imply the first estimate in \eqref{eq:asymptoticHs'}.

\medskip

\noindent\emph{Case 2: $0\leq \nu< \frac 12$}

In this case, we have 
$$ \int_0^{t-1}\left(\sin \sigma -\sigma \cos \sigma\right)^2\sigma^{-4+2\nu}d\sigma=c_{\nu}^{2}-\int_{t-1}^{\infty} \left(\sin \sigma-\sigma \cos \sigma\right)^2\sigma^{-4+2\nu}d\sigma=c_{\nu}^2+O\left(t^{2\nu-1}\right),$$ 
where $c_{\nu}^2=\int_0^{\infty} (\sin \sigma-\sigma \cos\sigma)^2\sigma^{-4+2\nu}d\sigma$. By \eqref{eq:T160} and \eqref{eq:T161},
$$\int_0^1 \left|\widehat{u_{<}}(t,\rho)\right|^2\rho^{2+2\nu}d\rho=\frac{32\pi^2}{t^{2\nu-1}}c_{\nu}^2+O\left(\frac{1}{t^{\nu-\frac 12}}\right).$$
Combining with \eqref{eq:T150} and \eqref{eq:T151}, we deduce the first estimate in \eqref{eq:asymptoticHs}.

\medskip

\noindent\emph{Case 3: $\nu=1/2$}

We have 
$$\int_0^{t-1}(\sin \sigma -\sigma \cos\sigma)^2\sigma^{-3}d\sigma=\int_0^1(\sin\sigma-\sigma\cos\sigma)^2\sigma^{-3}d\sigma+\int_1^{t-1} (\sin \sigma -\sigma \cos\sigma)^2\sigma^{-3}d\sigma.$$
The first integral is fixed, and finite. The second integral satisfies
\begin{multline*}
    \int_1^{t-1} (\sin \sigma -\sigma \cos\sigma)^2\sigma^{-3}d\sigma=\int_1^{t-1}\frac{\cos^2\sigma}{\sigma}d\sigma+O(1)\\
    =\frac 12\int_{1}^{t-1}\frac{d\sigma}{\sigma}+\frac{1}{2}\int_{1}^{t-1}\frac{\cos(2\sigma)}{\sigma}d\sigma=\frac{1}{2}\log (t-1)+O(1)=\frac{1}{2}\log t+O(1).
\end{multline*}
In view of \eqref{eq:T160}, \eqref{eq:T161} we obtain
$$\int_0^1\left|\widehat{u_{<}}(t,\rho)\right|^2\rho^3d\rho =16\pi^2\log t+O\left(\sqrt{\log t}\right),$$
which gives the first estimate in \eqref{eq:asymptoticH1/2}.

\subsection{Asymptotics of Sobolev norms of the time derivative}\label{sec:threshold_sobolev_timeder}
We next prove the estimates on the Sobolev norms of $\partial_tu$ in \eqref{eq:asymptoticHs}, \eqref{eq:asymptoticH1/2} and \eqref{eq:asymptoticHs'}. 

Let $\nu\in [-1,0]$. By \eqref{eq:T142}, $\|\partial_t\hat{u}(t)\|_{L^2}=(2\pi)^\frac32\|\partial_t u (t)\|_{L^2}=O(1)$. Thus 
\begin{equation}
    \label{dt500}
    \limsup_{t\to\infty} \int_{1}^{\infty} |\partial_t\hat{u}(t,\rho)|^2\rho^{2\nu+2}d\rho<\infty.
\end{equation}
Next, we recall from \eqref{superbound4'} that $|\partial_tu(t,r)|\lesssim \langle r+t\rangle^{-1}\langle r-t\rangle^{-2}$. By the formula \eqref{eq:T20} for the radial Fourier transform.
\begin{equation}
\label{dt10}
    \left|\partial_t\hat{u}(t,\rho)\right|\lesssim \frac{1}{\rho}\int \left|\sin(\rho r)\right|\frac{r}{\langle r+t\rangle \langle r-t\rangle^2}dr.
\end{equation}
Bounding $\sin(r\rho)$ by $r\rho$ if $r\rho\leq 1$ and by $1$ if $r\rho\geq 1$, we obtain 
$$ \int_{0}^1|\partial_t\hat{u}(\rho)|^2\rho^{2+2\nu}d\rho\lesssim A+B,$$
where 
\begin{align*}
A&=\int_0^1 \rho^{2+2\nu} \left(\int_0^{1/\rho}\frac{r^2}{\langle r+t\rangle \langle r-t\rangle^2}dr\right)^2d\rho\\ 
B&=\int_0^1 \rho^{2\nu} \left(\int_{1/\rho}^{\infty}\frac{r}{\langle r+t\rangle \langle r-t\rangle^2}dr\right)^2d\rho.    
\end{align*}
We decompose $A\lesssim A_1+A_2+A_3$, $B\lesssim B_1+B_2+B_3$, where the terms $A_j$ and $B_j$ are defined in the same way as $A$ and $B$, with the additional condition that $r\leq \frac{t}{2}$ (for $A_1$ and $B_1$), $\frac t2\leq r\leq 2t$ (for $A_2$ and $B_2$) and $r\geq 2t$ (for $A_3$ and $B_3$) in the interior integral. 

When $r\leq t/2$, we have $\langle r-t\rangle\approx t\approx \langle r+t\rangle$ and thus \begin{gather*}
    A_1\lesssim \int_0^1 \rho^{2+2\nu}\left(\int_0^{t/2}\frac{dr}{t}\right)^{2}d\rho\lesssim 1\\
    B_1\lesssim \int_{2/t}^1 \rho^{2\nu}\left(\int_{1/\rho}^{t/2} \frac{dr}{t^2}\right)^2d\rho \lesssim 1,
\end{gather*}
where we have used to bound $B_1$ that in the interior integral, one must have $1/\rho\leq r \leq t/2$, which imposes $\rho\geq 2/t$ (we will use the same argument to restrict the domains of integration in the bounds of $A_2$, $B_2$ and $A_3$ below).
When $\frac{t}{2}\leq r\leq 2t$, we have $r\approx \langle r+t\rangle \approx t$. Thus
\begin{equation*}
A_2\lesssim \int_0^{2/t}\rho^{2+2\nu}t^2\left(\int_{t/2}^{2t} \frac{dr}{\langle r-t\rangle^2}\right)^2d\rho \lesssim t^2\frac{1}{t^{3+2\nu}}\lesssim t^{-1-2\nu},
\end{equation*}
where we have used  $\int_{t/2}^{2t} \frac{dr}{\langle r-t\rangle^2}=\int_{-t/2}^{t} \frac{dr}{\langle r\rangle^2}\lesssim \int_{-\infty}^{\infty}\frac{dr}{\langle r\rangle^2}\lesssim 1$. Moreover
\begin{equation*}
B_2\lesssim \int_{1/2t}^1 \rho^{2\nu}\left(\int_t^{2t}\frac{dr}{\langle r-t\rangle^2}\right)^2d\rho \lesssim\begin{cases} 1&\text{ if }\nu >-1/2\\ \log t&\text{ if }\nu=-1/2\\ t^{-1-2\nu}&\text{ if } \nu<-1/2.\end{cases}.
\end{equation*}
When $r\geq 2t$, we have $\langle r-t\rangle \approx \langle r+t\rangle \approx r$. Thus 
\begin{gather*}
A_3\lesssim \int_0^{\frac{1}{2t}} \rho^{2+2\nu} \left(\int_0^{1/\rho}\frac{dr}{\langle r\rangle}\right)^2d\rho\lesssim t^{-3-2\nu}\log^2t\\
B_3\lesssim \int_0^1\rho^{2\nu}\left(\int_{1/\rho}^{\infty}\frac{dr}{r^2}\right)^2 d\rho \lesssim \int_0^1 \rho^{2\nu+2}d\rho\lesssim 1.
\end{gather*}
Combining the above bounds, we deduce 
$$\int_0^1 |\partial_t\hat{u}(\rho)|^2\rho^{2\nu+2}d\rho\lesssim \begin{cases} 1&\text{ if }\nu >-1/2\\ \log t&\text{ if }\nu=-1/2\\ t^{-1-2\nu}&\text{ if } \nu<-1/2,\end{cases}.$$
Combining with \eqref{dt500}, we deduce the bounds of the Sobolev norms of $\partial_tu$ in \eqref{eq:asymptoticHs} and \eqref{eq:asymptoticH1/2}, and also  
\begin{equation}
\label{bounddtu}
\|\partial_tu\|_{\dot{H}^{\nu}}\lesssim 1 
\end{equation}
for $\nu\in (-1/2,0]$. Recalling that \eqref{bounddtu} also holds when $\nu>1$ (see \eqref{boundtlLp}), and using the fact (shown below, see Remark \ref{rem:nonradiative}) that $1\lesssim \|\partial_tu(t)\|_{L^2}$, we deduce the estimate of the norms of $\partial_tu$ in \eqref{eq:asymptoticHs'}.

 This concludes the proof of Theorem \ref{thm:self2}, except for the lower bound in \eqref{eq:asymptoticdtLp} which will be proved at the end of the next subsection.

\subsection{Radiation term outside the wave cone}
\label{sub:radiation}
In this subsection, we prove Theorem \ref{thm:self0'}. We start with the following lemma:
\begin{lemma}
\label{lem:radiation}
Let $A\in \mathbb{R}$, $X\in \mathbb{R}$, $Y=\beta(X)$ or $Y=-\beta(-X)$ and $u=u_{X,Y}$. Let $U$ be the corresponding solution of the Duffing equation \eqref{eq:DuffingIVP}. Let 
$$ g(\eta)=-\frac{1}{(1+\eta^2)^{1/2}} U\left(\frac{\pi}{2}-\arctan\eta\right).$$
Then, for $p>1$,
\begin{multline}
\label{lim_radiation}
  \lim_{t\to\infty} \int_{A+t}^{\infty} \left(r\partial_t u(t,r)-g'(r-t)\right)^pdr\\
  =\lim_{t\to\infty} \int_{A+t}^{\infty} \left(r\partial_r u(t,r)+g'(r-t)\right)^pdr=0
 \end{multline}
\end{lemma}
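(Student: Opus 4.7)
The strategy is to change variables $\eta = r - t$ so that both integrals become integrals over the fixed domain $[A,\infty)$, and then use dominated convergence. It suffices to show that the integrands tend to $0$ pointwise and admit a uniform $L^p$-integrable envelope, once interpreted (as the hypothesis $p>1$ suggests) with absolute values.

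Starting from $u = \Omega U(s)$ with $s = \arctan(t+r)+\arctan(t-r)$, I would differentiate to write
\[
r\partial_t u = r(\partial_t \Omega)\, U(s) + r \Omega\,(\partial_t s)\,\dot U(s),
\]
and the symmetric formula for $r\partial_r u$. Setting $r = t + \eta$ with $\eta \geq A$ fixed and using $\arctan(2t+\eta) = \pi/2 + O(t^{-1})$, one gets the limits $s \to \pi/2 - \arctan \eta$, $r\Omega \to (1+\eta^2)^{-1/2}$, $r\partial_t\Omega \to \eta(1+\eta^2)^{-3/2}$, $\partial_t s \to (1+\eta^2)^{-1}$, and symmetrically $r\partial_r\Omega \to -\eta(1+\eta^2)^{-3/2}$, $\partial_r s \to -(1+\eta^2)^{-1}$. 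Since the limiting value of $s$ stays in the open interval $(0,\pi)$ for $\eta \in [A,\infty)$, the continuity of $U$ and $\dot U$ there yields
\[
r\partial_t u(t,t+\eta) \longrightarrow \frac{\eta\,U(\pi/2-\arctan\eta) + \dot U(\pi/2-\arctan\eta)}{(1+\eta^2)^{3/2}} = g'(\eta),
\]
and $r\partial_r u(t,t+\eta) \to -g'(\eta)$, so both integrands in \eqref{lim_radiation} go to $0$ pointwise.

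For the uniform envelope, Proposition~\ref{P:rough} with $\ell_1+\ell_2=1$ gives $|\partial_t u| + |\partial_r u| \lesssim \langle r+t\rangle^{-1}\langle r-t\rangle^{-2}$; at $r = t+\eta$ with $\eta \geq A$ and $t$ large this yields $|r\partial_t u| + |r\partial_r u| \lesssim \langle\eta\rangle^{-2}$ uniformly in $t$. For $|g'(\eta)|$ the same bound holds on $[A,\infty)$: the only subtle regime is $\eta\to-\infty$, where $\pi/2 - \arctan \eta \to \pi$ is the blow-up time of $U$, so each of $\eta U$ and $\dot U$ individually grows like $\eta^2$; but the precise asymptotics of Lemma~\ref{lem:asympt_duffing}, with $\pi - s = -1/\eta + O(\eta^{-3})$, give $\eta U(s) = -\sqrt 2\, \eta^2 + O(1)$ and $\dot U(s) = \sqrt 2\, \eta^2 + O(1)$, leaving the bounded remainder $\eta U + \dot U = O(1)$ and hence $|g'(\eta)| \lesssim \langle\eta\rangle^{-3}$ there. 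The envelope $C\langle\eta\rangle^{-2}$ raised to the $p$-th power is integrable on $[A,\infty)$ since $p>1$, and dominated convergence concludes. The main obstacle is exactly this cancellation in $\eta U + \dot U$ as $\eta \to -\infty$, which is what prevents a naive termwise bound; once it is extracted from the explicit blow-up asymptotics of the Duffing solution, the rest of the argument is routine.
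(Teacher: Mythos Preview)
Your argument is correct and follows the same route as the paper (set $\eta=r-t$, establish pointwise convergence and an $L^p$ envelope on $[A,\infty)$, apply dominated convergence). One simplification: your analysis of the cancellation in $\eta U+\dot U$ as $\eta\to-\infty$ is unnecessary, since on the fixed ray $\eta\in[A,\infty)$ the argument $\pi/2-\arctan\eta$ of $U$ lies in $(0,\pi/2-\arctan A]\subset(0,\pi)$, bounded away from the blow-up time $T_+=\pi$; hence $U,\dot U$ are uniformly bounded there by a constant depending only on $A$, and $|g'(\eta)|\le C(A)\langle\eta\rangle^{-2}$ follows directly. This is precisely how the paper proceeds (it records the elementary bounds on $r\Omega$, $r\partial_t\Omega$, $\partial_t s$ and pairs them with the boundedness of $U,\dot U$ on that $s$-interval, without appealing to Proposition~\ref{P:rough}); the cancellation you extracted is real and is exploited later, in the proof of Theorem~\ref{thm:self0'}, but it is not the obstacle for this lemma.
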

\begin{proof}
We have
 \begin{equation}
  \label{exp_dtu_dru}
  \frac{\partial u}{\partial t}=\frac{\partial \Omega}{\partial t}U(s)+\Omega \frac{\partial s}{\partial t}\dot{U}(s),\quad \frac{\partial u}{\partial r}=\frac{\partial \Omega}{\partial r}U(s)+\Omega \frac{\partial s}{\partial r}\dot{U}(s).
 \end{equation}
 We fix a $A\in \mathbb{R}$ and assume without loss of generality $A<0$. 
 We let $r=t+\eta$ with $\eta\geq A$, and $t\geq -A$, so that $r+t=2t+\eta\geq \max(t,|\eta|)$. We have \footnote{Denoting as usual $\frac{\partial}{\partial t}$ the derivative with respect to $t$ at fixed $r$.} 
 \begin{gather}
 \label{estimrad1}
s=-\arctan \eta+\arctan(2t+\eta),\quad |s|\leq \frac{\pi}{2}+\arctan |A|<\pi\\
\label{estimrad2}
\frac{\partial s}{\partial t}=\frac{1}{1+\eta^2}+\frac{1}{1+(2t+\eta)^2},\quad \left|\frac{\partial s}{\partial t}\right|\leq \frac{2}{1+\eta^2}\\
\label{estimrad3}
\frac{\partial s}{\partial r}=-\frac{1}{1+\eta^2}+\frac{1}{1+(2t+\eta)^2},\quad \left|\frac{\partial s}{\partial r}\right|\leq \frac{1}{1+\eta^2}\\
\label{estimrad4}
(t+\eta)\Omega=\frac{2(t+\eta)}{\sqrt{1+(2t+\eta)^2}\sqrt{1+\eta^2}},\quad |(t+\eta)\Omega|\leq \frac{2}{\sqrt{1+\eta^2}}\\
\label{estimrad5}
(t+\eta) \frac{\partial \Omega}{\partial t}=-\frac{4t(t+\eta)(1-\eta(2t+\eta))}{\left( 1+(2t+\eta)^2 \right)^{3/2}\left( 1+\eta^2 \right)^{3/2}},\quad \left|(t+\eta)\frac{\partial \Omega}{\partial t}\right|\leq \frac{C}{1+\eta^2}\\
\label{estimrad6}
(t+\eta) \frac{\partial \Omega}{\partial r}=-\frac{4(t+\eta)^2(1+\eta(2t+\eta))}{\left( 1+(2t+\eta)^2 \right)^{3/2}\left( 1+\eta^2 \right)^{3/2}},\quad \left|(t+\eta)\frac{\partial \Omega}{\partial r}\right|\leq \frac{C}{1+\eta^2}.
\end{gather}
Combining \eqref{exp_dtu_dru} with the equalities in \eqref{estimrad1}, \eqref{estimrad2}, \eqref{estimrad3}, \eqref{estimrad4}, \eqref{estimrad5} and \eqref{estimrad6}, we see that 
\begin{equation}
 \label{lim_rad}
 \lim_{t\to\infty} (t+\eta)\frac{\partial u}{\partial t}(t,t+\eta)=-\lim_{t\to\infty} (t+\eta)\frac{\partial u}{\partial r}(t,t+\eta)=g'(\eta).
\end{equation}  
Using again \eqref{exp_dtu_dru}, together with the uniform bounds in \eqref{estimrad1}, \eqref{estimrad2}, \eqref{estimrad3}, \eqref{estimrad4}, \eqref{estimrad5} and \eqref{estimrad6} we obtain, uniformly for $t\geq -A\geq 0$, $\eta\geq A$,
\begin{equation}
 \label{boundCVD}
 |t+\eta| \left( \left|\frac{\partial u}{\partial t}\right|+ \left|\frac{\partial u}{\partial r}\right|\right)\leq \frac{C(A)}{1+\eta^2},
\end{equation} 
 where the constant $C(A)$ depends only on $A$, and we have used that $U$ and $\dot{U}$ are bounded on $\left[0,\pi/2-\arctan A\right]$. By \eqref{lim_rad}, \eqref{boundCVD} and dominated convergence, we obtain the conclusion of the lemma.
 \end{proof}
 \begin{proof}[Proof of Theorem \ref{thm:self0'}]
 In view of Lemma \ref{lem:radiation}, to prove  Theorem \ref{thm:self0'}, it is sufficient to find a solution $v_L$ of the linear wave equation \eqref{FW} such that 
\begin{multline*}
  \lim_{t\to\infty} \int_{A+t}^{\infty} \left(r\partial_t v_L(t,r)-g'(r-t)\right)^2dr\\
  =\lim_{t\to\infty} \int_{A+t}^{\infty} \left(r\partial_r v_L(t,r)+g'(r-t)\right)^2dr=0.
 \end{multline*}
 By explicit computation, one can check that the solution $v_L$ of \eqref{FW} with radial initial data $(v_0,v_1)$ is given by
 \begin{equation}
     \label{formulavL}
v_L(t,r)=\frac{1}{r}\left(F(t+r)-F(t-r)\right),\quad F(\eta)=\frac{\eta}{2}v_0(|\eta|)+\frac{1}{2}\int_0^{|\eta|} rv_1(r)dr.
 \end{equation}
As a consequence, if $(v_0,v_1)\in \mathcal{H}^1$, we have that $F'\in L^2(\mathbb{R})$ and
\begin{multline}
\label{lim_radiation'}
  \lim_{t\to\infty} \int_{A+t}^{\infty} \left(r\partial_t v_L(t,r)+F'(t-r)\right)^2dr\\
  =\lim_{t\to\infty} \int_{A+t}^{\infty} \left(r\partial_r v_L(t,r)-F'(t-r)\right)^2dr=0.
 \end{multline}
 In view of \eqref{lim_radiation}, \eqref{lim_radiation'}, we see that \eqref{asymptotic-linear} will hold if and only if $F'(\eta)=-g'(-\eta)$. This amounts to solving the system of equations, for $r>0$,
 \begin{equation*}
 \left\{
 \begin{aligned}
  \frac 12 v_0(r)+\frac 12 rv_0'(r)+\frac{rv_1(r)}{2}&=-g'(-r)\\
 \frac 12 v_0(r)+\frac 12 rv_0'(r)-\frac{rv_1(r)}{2}&=-g'(r),
\end{aligned}\right.
 \end{equation*}
which has the solution
\begin{equation}
\label{defv0v1}
 v_0(r)=\frac{1}{r}\left(g(-r)-g(r)\right),\quad v_1(r)=\frac{1}{r}\left( g'(r)-g'(-r) \right).
\end{equation} 
 As a conclusion, defining $v_L$ as the solution of \eqref{FW} with initial data $(v_0,v_1)$, we have that the conclusion \eqref{asymptotic-linear} of Theorem \ref{thm:self0'} is satisfied. Also, in view of the expansion \eqref{eq:T73} of $\arctan$, one has, as $\eta\to+\infty$,
 $$ U\left(\frac{\pi}{2}-\arctan \eta\right)=U(0)-\frac{1}{\eta}\dot{U}(0)+O\left( \frac{1}{\eta^2} \right),\quad \dot{U}\left(\frac{\pi}{2}-\arctan \eta\right)=\dot{U}(0)+O\left(\frac{1}{\eta}\right),$$
 so that
 $$ g(\eta)=-\frac{1}{\eta}U(0)+O\left(\frac{1}{\eta^2}\right),\quad g'(\eta)=\frac{1}{\eta^2}U(0)+O(\frac{1}{\eta^3}),\quad \eta\to\infty.$$
 Using again \eqref{eq:T73} and the estimates of $U$ and $\dot{U}$ at the blow-up time $T_+=\pi$ (see Lemma \ref{lem:asympt_duffing}), we obtain, as $\eta\to-\infty$,
 $$U\left(\frac{\pi}{2}-\arctan \eta\right)=\sqrt{2}\eta+O\left( \frac{1}{\eta} \right),\quad \dot{U}\left(\frac{\pi}{2}-\arctan \eta\right)=-\sqrt{2}\eta^2+O(1),$$
so that
$$ g(\eta)=-\sqrt{2}+O(1/\eta),\quad g'(\eta)=O\left( \frac{1}{\eta^2} \right),\quad\eta\to-\infty.$$
As a consequence, we see that, as $r\to\infty$,
$$ v_0(r)=-\frac{\sqrt{2}}{r}+O\left( \frac{1}{r^2} \right),\quad \partial_rv_0(r)=\frac{\sqrt{2}}{r^2}+O\left(\frac{1}{r^2}\right),\quad v_1(r)=O\left(\frac{1}{r^3}\right).$$
In particular, $v_0\in \dot{H}^s$, $s\in (1/2,1]$ but $v_0\notin L^3$ and thus $v_0\notin \dot{H}^{1/2}$. Also $v_1\in L^p$ for $p>1$ which implies that $v_1\in \dot{H}^{\nu}$ for $\nu\in (-3/2,0]$. Differentiating the formulas for $v_0$ and $v_1$ and using similar argument as in Subsection \ref{sub:rough} one can check also that $(v_0,v_1)\in \mathcal{H}^s$ for $s>1$, which concludes the proof of Theorem \ref{thm:self0'}. 
\end{proof}

\begin{remark}
\label{rem:transition}
Using \eqref{estimrad4} and the definition of $u$, we see that 
$$ \lim_{t\to\infty}(t+\eta)u(t,t+\eta)=-g(\eta).$$
On the other hand, by \eqref{formulavL} with $F(\eta)=g(-\eta)$, and using that $g$ converges to $-\sqrt{2}$ in $-\infty$, we obtain
$$ \lim_{t\to\infty}(t+\eta)v_L(t,t+\eta)=-\sqrt{2}-g(\eta).$$
As a conclusion,
$$ \lim_{t\to\infty}(t+\eta)\Big(u(t,t+\eta)-v_L(t,t+\eta)\Big)=\sqrt{2}.$$
This discrepancy between $u$ and the linear solution $v_L$ expresses the transition between the linear behaviour and $\sqrt{2}/t$ at the boundary of the wave cone.  
\end{remark}
\begin{remark}
\label{rem:nonradiative}
By Lemma \ref{lem:radiation}, a threshold solution $u$ satisfies, for $p\geq 1$:
\begin{multline*}
\forall A<B,\quad \lim_{t\to\infty}\int_{A+t<|x|<B+t}|\partial_tu(t,x)|^pdx\\
=\lim_{t\to\infty}\int_{A+t<|x|<B+t}|\partial_ru(t,x)|^pdx=4\pi\int_{A}^{B}(g'(\eta))^pd\eta.
\end{multline*}
We claim that this quantity is positive for all $A$, $B$ with $A<B$. In particular the solution is not nonradiative for $|x|>R+|t|$ for any $R>0$ (see \cite{CoDuKeMe22Pb} for the definition of nonradiative). Indeed, if $g'(\eta)=0$ on an interval $I$, solving a first order differential equation, we would obtain that $U(s)=\frac{c}{\sin s}$ for some constant $c\in \mathbb{R}$ on this interval.  Since $U$ is also a nonzero solution of $U''+U=U^3$, we must have $c\in \{\pm \sqrt{2}\}$, and (by uniqueness in the Cauchy-Lipschitz theorem), $U(s)=\frac{c}{\sin s}$ on all its domain of existence. Since $U$ is defined at $s=0$ this is an obvious contradiction. Note that the solutions $U(s)=\pm\sqrt{2}/\sin(s)$ of the Duffing equations correspond, undoing the Penrose transformation, to the solutions $u(t)=\pm\sqrt{2}/t$ of \eqref{NLW}; see the proof of Proposition~\ref{prop:blow_up_attractor} in subsection~\ref{sec:attractor}.

 As a consequence, we also obtain the existence of a constant $C>0$ such that 
\begin{equation}
    \label{eq:dtLp4}
    \int_{t-1}^{t+1}|\partial_tu(t,r)|^pr^2dr\geq \frac{1}{Ct^{p-2}},
\end{equation}
since in the range of the integral, $r\approx t$. This gives the lower bound in \eqref{eq:asymptoticdtLp}, concluding the proof of Theorem \ref{thm:self2}.
\end{remark}

\section{Blow-up}
In this section, we fix once and for all $(X, Y)\in \mathbb R^2$  such that $u=u_{X,Y}$ satisfies the case (Blow-up) of Theorem~\ref{thm:main}. Since $(X, Y)$ are fixed, we will write $u$ in place of $u_{X, Y}$. In particular, 
\begin{equation*}
    u=u(t, r) \text{ blows up at } r=0, t_+>0.    
\end{equation*}
Recall from Lemma~\ref{lem:penrose} that, with $r=\lvert x \rvert$, 
\begin{equation}\label{eq:penrose_recall_blowup_section}
    \begin{array}{ccc}
        u(t, x)=\Omega(t, r)U(s), &\text{where }&
            \left\{\begin{array}{c}
                \Omega(t, r)=2(1+(t+r)^2)^{-1/2}(1+(t-r)^2)^{-1/2}\\ 
                s=\arctan(t+r)+\arctan(t-r).
            \end{array}\right.
    \end{array}
\end{equation}
Considering $s\in (-\pi, \pi)$ as an independent variable, the function $U=U(s)$ satisfies $\ddot{U}+U=U^3$ on the interval $(T_-, T_+)$. Since we are in the (Blow-up) case, $T_+\in(0, \pi)$. For $t\ge 0$, the maximal existence domain of $u=u(t, r)$ is the region $t<M_+(T_+, r)$, where $t=M_+(T_+, r)$ is the positive root of the quadratic equation
\begin{equation}\label{eq:quadratic_blowup}         
    1+r^2-t^2=2t\cot(T_+),
\end{equation}
that is the surface $s(t, r)=T_+$; recall Figure~\ref{fig:hyperboloids}. 

In this section, we will first of all prove Theorem~\ref{thm:blowup_stable} (stability of blow-up). Then we will study the pointwise blow-up in the sense of Merle and Zaag, as announced in the introduction. We will then prove Theorem~\ref{thm:blowup} (asymptotics of Lebesgue and Sobolev norms) and finally we will prove Proposition~\ref{prop:blow_up_attractor} (convergence to an attractor).

\subsection{Proof of Theorem~\ref{thm:blowup_stable}}
  By Lemma~\ref{lem:asympt_duffing}, there is $T_0\in (0, T_+)$ such that $U(s), \dot U(s)$ and $\ddot U(s)$ have the same sign for $s\in (T_0, T_+)$. Without loss of generality, we assume 
\begin{equation}\label{eq:U_Udot_Uddot}
    \begin{array}{cccc}
    U(s)>0, & \dot{U}(s)>0, & \ddot{U}(s)>0, & \text{for all }s\in(T_0, T_+).
    \end{array}
\end{equation}

The main step of the proof of Theorem~\ref{thm:blowup_stable} is the construction of a suitable \emph{barrier function}. For a $\delta\in(0, \pi-T_+)$ to be chosen later, we let 
\begin{equation}\label{eq:u_delta}
    \begin{array}{cc}
        u_\delta(t, r)=\Omega(t, r)U(s-\delta), & s=\arctan(t+r)+\arctan(t-r).
    \end{array}
\end{equation}
Since $U_\delta(s)=U(s-\delta)$ clearly solves the Duffing ODE~\eqref{eq:DuffingIVP}, $u_\delta$ is a solution to the cubic wave equation~\eqref{NLW} that blows-up at $(t_+(\delta),0)$,  where $t_+(\delta)>t_+$ satisfies  
\begin{equation}\label{eq:t_plus_delta}
    2\arctan t_+(\delta) = T_++\delta.
\end{equation}
\begin{lemma}[Barrier function] \label{lem:barrier} 
Provided $\delta>0$ is small enough, there is $t_1\in (0, t_+)$, depending on $u$ and on $\delta$ and such that for all $r\in [0, t_+(\delta)-t_1]$, 
\begin{equation}\label{eq:barrier_conclusion}
    \begin{array}{cc}
        u(t_1, r)>u_\delta(t_1, r), &(\partial_t+\partial_r)(ru(t, r))|_{t=t_1}>(\partial_t+\partial_r)(ru_\delta(t, r))|_{t=t_1}.
    \end{array}
\end{equation}
\end{lemma}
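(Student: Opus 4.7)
The plan is to establish~\eqref{eq:barrier_conclusion} by a direct differential calculation, exploiting the fact that $u$ and $u_\delta$ share the same conformal factor $\Omega$. Writing $\Delta U := U(s) - U(s-\delta)$ and $\Delta\dot U := \dot U(s) - \dot U(s-\delta)$, and using the outgoing null derivative $\partial_t + \partial_r$, I would first record the three elementary identities
\begin{equation*}
    (\partial_t + \partial_r)s = \frac{2}{1+(t+r)^2}, \quad (\partial_t + \partial_r)\Omega = -\frac{2(t+r)}{1+(t+r)^2}\Omega, \quad (\partial_t + \partial_r)r = 1,
\end{equation*}
which combine into
\begin{equation*}
    u - u_\delta = \Omega\,\Delta U, \qquad (\partial_t + \partial_r)\bigl[r(u-u_\delta)\bigr] = \frac{\Omega}{1+(t+r)^2}\Bigl[(1+t^2-r^2)\,\Delta U + 2r\,\Delta\dot U\Bigr].
\end{equation*}
Since $\Omega > 0$, the two inequalities in~\eqref{eq:barrier_conclusion} reduce to three sign conditions on $r \in [0, t_+(\delta) - t_1]$ with $t = t_1$: (i) $\Delta U > 0$; (ii) $\Delta \dot U \ge 0$; (iii) $1 + t_1^2 - r^2 > 0$.

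Conditions (i)--(ii) will follow from the standing assumption~\eqref{eq:U_Udot_Uddot} as soon as the arguments $s(t_1, r) - \delta$ and $s(t_1, r)$ both lie in $(T_0, T_+)$. Since $s(t,r) = \arctan(t+r) + \arctan(t-r)$ is monotone decreasing in $r > 0$, this reduces to checking $s(t_1, 0) = 2\arctan(t_1) < T_+$, which is automatic from $t_1 < t_+$, together with $s(t_1, t_+(\delta) - t_1) > T_0 + \delta$. A first-order Taylor expansion at the reference point $(t_+, t_+(\delta) - t_+)$ yields
\begin{equation*}
    s\bigl(t_+ - \epsilon,\; t_+(\delta) - t_+ + \epsilon\bigr) = T_+ - \frac{2\epsilon}{1+t_+^2} + O(\delta^2 + \epsilon^2),
\end{equation*}
so the required lower bound holds for $\delta$ small and then $\epsilon = t_+ - t_1$ small. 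Condition (iii) is immediate, since $r \le t_+(\delta) - t_1 = O(\delta + \epsilon)$ is much smaller than $\sqrt{1+t_1^2} \ge 1$ once $\delta, \epsilon$ are small.

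The proof should therefore proceed in two steps: first fix any $\delta \in \bigl(0,\,\min(\pi - T_+,\, T_+ - T_0)\bigr)$, and then choose $\epsilon = t_+ - t_1 > 0$ sufficiently small, depending on $\delta$, to validate (i)--(iii) simultaneously. The one delicate point is the ordering of these parameters: $\epsilon$ must be taken \emph{after} $\delta$, so that on the narrow interval $[0, t_+(\delta) - t_1]$ the variable $s$ stays close to $T_+$, both endpoints of the argument $[s-\delta, s]$ lie in the monotonicity region $(T_0, T_+)$, and $r^2$ remains negligible against $1 + t_1^2$. No further analytic obstacle is expected --- morally, the barrier construction is a quantitative version of the intuitive statement that $u_\delta$ is nothing but a time-delayed copy of $u$.
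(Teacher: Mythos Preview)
Your proposal is correct and follows essentially the same approach as the paper: both derive the identity
\[
(\partial_t+\partial_r)\bigl(r(u-u_\delta)\bigr)=\frac{\Omega}{1+(t+r)^2}\Bigl[(1+t^2-r^2)\,\Delta U+2r\,\Delta\dot U\Bigr],
\]
reduce the problem to ensuring $s(t_1,r)-\delta>T_0$ on the interval (together with $1+t_1^2-r^2>0$), and use the monotonicity of $s$ in $r$ to check only the endpoint $r=t_+(\delta)-t_1$. The only cosmetic difference is that you verify the endpoint inequality via a first-order Taylor expansion, while the paper passes to the limit $t_1\to t_+$, $\delta\to 0$ directly; the parameter ordering (choose $\delta$ small, then $t_1$ close to $t_+$) is identical.
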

\begin{proof}
    We will find a $t_1$ that satisfies the second condition in~\eqref{eq:barrier_conclusion}. Once this is done, checking that the first condition is also satisfied is immediate. We compute 
    \begin{equation}\label{eq:compute_udelta_deriv}
        \begin{split}
            &(\partial_t+\partial_r)(ru(t, r)-ru_\delta(t, r))= \\ 
            &\frac{\Omega(t, r)}{1+(t+r)^2}\left( (1+t^2-r^2)(U(s)-U(s-\delta))+2r(\dot{U}(s)-\dot{U}(s-\delta))\right).
        \end{split}
    \end{equation}
    Recall that $U=U(s)$ and $\dot{U}=\dot{U}(s)$ are increasing for $s\in (T_0, T_+)$, corresponding to the region depicted in Figure~\ref{fig:barrier_function}. 
    \begin{figure}
    \centering
    \begin{tikzpicture}
        \begin{axis}[axis on top, axis x line=middle, axis y line=middle, xmin=-0.61, xmax=4, ymin=-0.61, ymax=3.34, xlabel=$\scriptsize{r}$, ylabel=$\scriptsize{t}$, xtick=\empty, ytick=\empty]
            \addplot[mark=none, draw=blue, domain=0:7, name path=down] {-1.2+sqrt(1+x^2+(1)^2)};
            \draw (0, -1.2+1.44) node[anchor=east]{$t_0$};
            \draw[blue] (4, 1.8) node[anchor=east] {$s(t,r)=T_0$};
            \addplot[mark=none, draw=blue, domain=0:3, name path=down] {0.8+sqrt(1+x^2+(1)^2)};
            \draw[blue] (2.48, 3.3) node[anchor=north] {$s(t,r)=T_+$};
            \draw (0, 0.8+1.44) node[anchor=east]{$t_+$};
            \draw[dashed] (0, 3) node[anchor=east]{$t_+(\delta)$} -- (3.3, -0.3) node[anchor=east]{$r=t_+(\delta)-t$};
            
            \draw (0, 1.5) node[anchor=east]{$t_1$}--(1.5, 1.5);
        \end{axis}
        \end{tikzpicture}
    \caption{The time $t_1$ is chosen so that the segment lies well inside the region $s\in (T_0, T_+)$. The corresponding forward light cone has its tip at $(t_+(\delta), 0)$, where $u_\delta$ blows up.} 
    \label{fig:barrier_function}
\end{figure}
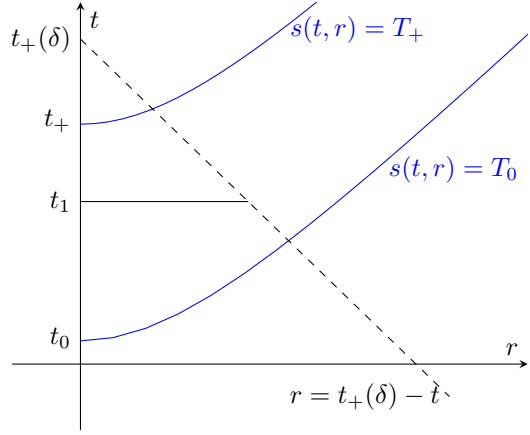
So it is sufficient to find a $t_1$, with $0<t_+(\delta)-t_1<1$, such that for $0<r<t_+(\delta)-t_1$ it is true that 
    \begin{equation}\label{eq:s_minus_delta}
        s(t_1, r)-\delta= \arctan(t_1+r)+\arctan(t_1-r)-\delta >T_0.
    \end{equation}
    
    Note that the condition $t_+(\delta)-t_1<1$ ensures that all coefficients in the right-hand side of~\eqref{eq:compute_udelta_deriv} are positive in this range of $r$ for $t=t_1$. 
    
    Since $\arctan(t_1+r)+\arctan(t_1-r)$ is decreasing in $r$, it is sufficient to check 
    \begin{equation}\label{eq:barrier_tocheck}
        \arctan(t_+(\delta))+\arctan(2t_1-t_+(\delta))-\delta> T_0.
    \end{equation}
    Obviously, $2\arctan(t_+(\delta))-\delta\to T_+>0$ as $\delta\to 0$. Thus there is a $\delta_0>0$ such that, for all $\delta\in (0, \delta_0)$,
    \begin{equation*}
         \arctan(t_+(\delta))+\arctan(2t_+-t_+(\delta))-\delta> T_0,
    \end{equation*}
    so taking $t_1$ close enough to $t_+$ we see that~\eqref{eq:barrier_tocheck} is satisfied, concluding the proof.
\end{proof}
We are now ready to complete the proof of Theorem~\ref{thm:blowup_stable}. Let $\eta>0$ be fixed. Let moreover $v=v(t, r)$ be a general radial solution to~\eqref{NLW}, not necessarily of the form~\eqref{eq:uAB_initial}, such that 
\begin{equation}\label{eq:u_close_v_blowup}
    \lVert \boldsymbol{u}(0)-\boldsymbol{v}(0)\lVert_{H^3\times H^2}<\epsilon.
\end{equation}
Recall that we need to prove that, if $\epsilon=\epsilon(u, \eta)>0$ is sufficiently small, then there is $t_+'<t_++\eta$ such that $\lVert \boldsymbol{v}(t)\rVert_{H^3\times H^2}\to \infty$ as $t\nearrow t_+'$.

We fix a $\delta=\delta(u, \eta)>0$ so small that $t_+(\delta)<t_++\eta$. We let $t_1$ and $u_\delta$ be as in Lemma~\ref{lem:barrier}; thus there is $\alpha=\alpha(u, \delta)>0$ such that 
\begin{equation}\label{eq:alpha_blowup}
    \begin{array}{cc}
        u(t_1, r)\ge u_\delta(t_1, r)+\alpha, &(\partial_t+\partial_r)(ru(t, r))|_{t=t_1}\ge (\partial_t+\partial_r)(ru_\delta(t, r))|_{t=t_1}+\alpha.
    \end{array}
\end{equation}
By standard local-wellposedness theory in $H^3\times H^2$, there is $\epsilon_0=\epsilon_0(u, \alpha)$ such that, for all $\epsilon <\epsilon_0$, the condition~\eqref{eq:u_close_v_blowup} implies that $\lVert \boldsymbol{u}(t_1) - \boldsymbol{v}(t_1)\rVert_{H^3\times H^2}\le \frac{\alpha}{C}$ for a large constant $C>0$. By Sobolev embedding,
\begin{equation}\label{eq:sobolev_embedding}
    \lvert u(t_1, r)-v(t_1, r)\rvert + \big\lvert (\partial_t+\partial_r)(ru(t, r)-rv(t, r))|_{t=t_1} \big\rvert \le \frac\alpha2  
\end{equation}
for $r\in [0, t_+(\delta)-t_1]$, and we conclude that 
\begin{equation}\label{eq:v_vs_barrier}
    \begin{array}{cc}
      v(t_1, r)\ge u_\delta(t_1, r), & (\partial_t+\partial_r)(rv(t, r))|_{t=t_1}\ge (\partial_t+\partial_r)(ru_\delta(t, r))|_{t=t_1}
    \end{array}
\end{equation}
again for $r\in [0, t_+(\delta)-t_1]$. Now, by positivity properties of the wave equation on $\mathbb R^{1+3}$, these last conditions imply 
\begin{equation}\label{eq:v_vs_barrier_conclusion}
    v(t, r)\ge u_\delta(t, r)
\end{equation}
on the truncated light cone  $\{ t\in (t_1, t_+(\delta)),\ r\in [0, t_+(\delta)-t]\}$; see Figure~\ref{fig:barrier_function}. 
Since $\lVert u_\delta(t, \cdot)\rVert_{L^\infty(\mathbb R^3)}\to \infty$ as $t\nearrow t_+(\delta)$, we have, again by Sobolev embedding,
\begin{equation}\label{eq:v_blows_up}
    \lVert \boldsymbol{v}(t)\rVert_{H^3\times H^2}\ge C\lVert v(t, \cdot)\rVert_{L^\infty(\mathbb R^3)}\to \infty, \text{ as }t\nearrow t_+(\delta),
\end{equation}
concluding the proof of Theorem~\ref{thm:blowup_stable}.

\subsection{Blow-up profiles}\label{sec:blowup_profile}
We consider $t_\ast>0$ and $r_\ast\ge 0$ such that the point $(t_\ast, r_\ast, 0,0)\in\mathbb R^{1+3}$ lies on the blow-up surface $s(t_\ast, r_\ast)=T_+$, that is $t_\ast=M_+(T_+, r_\ast)$; recall~\eqref{eq:quadratic_blowup}. We pointed out in Remark~\ref{rem:characteristic_points} that this point is non-characteristic. We introduce the \emph{radial self-similar coordinates} on the backwards light cone at $(t_\ast, r_\ast)$:
\begin{equation}\label{eq:SSCoords}
    \begin{array}{ccc}
        \displaystyle y=\frac{r-r_\ast}{t_\ast -t}\in (-1, 1), & \sigma=-\log(t_\ast -t )\ge 0, & w(\sigma, y)=(t_\ast -t)u(t, r).
    \end{array}
\end{equation}
%
In this subsection we will prove that 
\begin{equation}\label{eq:blowup_profile}
    \begin{array}{cc}\displaystyle
        \lim_{\sigma\to \infty} w(\sigma, y)=\sqrt{2}\frac{(1-d^2)^\frac12}{1+yd}, & \text{where } \displaystyle d=\left.\frac{\partial M_+}{\partial r}(T_+, r)\right|_{r=r_\ast};
    \end{array}
\end{equation}
that is, our blow-up solutions follow exactly the behaviour predicted by Merle--Zaag, generalizing their one-dimensional case; see~\cite[p.3]{MeZa15}. 
\begin{remark}
    In the general, non-radial framework of Merle--Zaag, our $y$ coordinate should be replaced by $\vec y=\frac{x-(r_\ast,0,0)}{t_\ast -t}$, and the relation~\eqref{eq:blowup_profile} should read
    \begin{equation}\label{eq:blowup_profile_general}
        \begin{array}{cc}\displaystyle
        \lim_{\sigma\to \infty} w(\sigma, \vec y)=\sqrt{2}\frac{(1-\lvert \vec{d}\rvert^2)^\frac12}{1+\vec d\cdot \vec y}, & \text{where } \displaystyle \vec d=\left.\nabla_x(M_+(T_+, \lvert x \rvert))\right|_{x=(r_\ast,0,0)}.
    \end{array}
    \end{equation}
    However, by the radial symmetry of our problem, proving~\eqref{eq:blowup_profile} is clearly enough to conclude that~\eqref{eq:blowup_profile_general} holds. 
\end{remark}
To prove~\eqref{eq:blowup_profile}, recalling $u(t, r)=\Omega(t, r)U(s)$, we compute the following asymptotic for $s=\arctan(t+r)+\arctan(t-r)$, uniform in $|y|<1$:
\begin{equation}\label{eq:s_expansion}
    s=T_+-e^{-\sigma}\left( c(t_\ast, r_\ast)+d(t_\ast, r_\ast)r_\ast y\right) + O(e^{-2\sigma}), 
\end{equation}
where
\begin{equation}\label{eq:c_and_d_blowup}
    \begin{array}{cc}\displaystyle
        c=\frac{1}{1+(t_\ast+r_\ast)^2}+\frac{1}{1+(t_\ast-r_\ast)^2}, & \displaystyle d=\frac1r_\ast\left( \frac{1}{1+(t_\ast+r_\ast)^2}-\frac{1}{1+(t_\ast-r_\ast)^2}\right)\!.
    \end{array}
\end{equation}
So the ODE asymptotics of Lemma~\ref{lem:asympt_duffing} yield 
\begin{equation}\label{eq:U_asympt_blowup}
    U(s)=\sqrt 2\big(c(t_\ast, r_\ast)+d(t_\ast, r_\ast)rr_\ast\big)^{-1}e^\sigma + O(1).
\end{equation}
Recalling that $\Omega(t, r)=2(1+(t+r)^2)^{-1/2}(1+(t-r)^2)^{-1/2}$, we have 
\begin{equation}\label{eq:blowup_profile_making}
    \begin{split}
        w(\sigma, y)&=\sqrt{2}\frac{\Omega(t_\ast, r_\ast)}{c(t_\ast, r_\ast)+d(t_\ast, r_\ast)r_\ast y} +O(e^{-\sigma}) \\ 
        &=\sqrt{2}\frac{ (1+(t_\ast + r_\ast)^2)^{1/2}(1+(t_\ast -r_\ast)^2)^{1/2}}{1+t_\ast^2-r_\ast^2}\frac{1}{1+yd} + O(e^{-\sigma})\\ 
        &=\sqrt{2}\frac{(1-d^2)^{1/2}}{1+yd} + O(e^{-\sigma}), 
    \end{split}
\end{equation}
%
with 
\begin{equation}\label{eq:d_vec}
    d=\frac{d(t_\ast, r_\ast)}{c(t_\ast, r_\ast)}r_\ast=\frac{2t_\ast r_\ast}{1+t_\ast^2 + r_\ast^2}.
\end{equation}
Differentiating the relation~\eqref{eq:quadratic_blowup} that defines $M_+(T_+, r)$ we immediately infer that
\begin{equation}\label{eq:d_is_derivative_final}
    \left.\frac{\partial M_+(T_+, r)}{\partial r}\right|_{r=r_\ast}=\frac{r_\ast}{t+\cot(T_+)}=\frac{2t_\ast r_\ast}{1+t_\ast^2+r_\ast^2}= d, 
\end{equation}
where we used~\eqref{eq:quadratic_blowup} to obtain the second identity. This completes the proof of~\eqref{eq:blowup_profile}.

\subsection{Proof of Theorem~\ref{thm:blowup}} We start by proving~\eqref{eq:blowup_Lthree}, that is
\begin{equation}\tag{\ref{eq:blowup_Lthree}}\label{eq:blowup_Lthree_recall}
    \lVert u(t)\rVert_{L^3(\mathbb R^3)}= \frac{C_0^\frac13}{(t_+-t)^{1/2}}+O(1).
\end{equation}
\noeqref{eq:blowup_Lthree_recall}
Recalling $s(t, r)=\arctan(t+r)+\arctan(t-r)$, since $s(t_+,0)=T_+$, a Taylor expansion of $\arctan$ around $t_+$ yields
\begin{equation}\label{eq:TaylorExpandBlowup}
    T_+- s=\frac{2}{1+t_+^2}(t_+-t)+\frac{2t_+}{(1+t_+^2)^2}r^2+O((t_+-t)^2+r^4).
\end{equation}
By the asymptotic $U(s)=\sqrt{2}(T_+-s)^{-1}+O(T_+-s)$ of Lemma~\ref{lem:asympt_duffing},
\begin{equation}\label{eq:integrand_asymptotic}
    \lvert U(s)\rvert^3=\frac{(1+t_+^2)^3}{2\sqrt 2} \left( t_+-t+\frac{t_+}{1+t_+^2}r^2\right)^{-3} + O( (t_+-t+r^2)^{-2}).
\end{equation}
Now we expand $\Omega(t, r)=2(1+(t-r)^2)^{-1/2}(1+(t+r)^2)^{-1/2}$ around $(t_+, 0)$, giving
\begin{equation}\label{eq:OmegaExpansionBlowup}
    \Omega^3(t, r)=\frac{2^3}{(1+t_+^2)^3}+O(t_+-t+r), 
\end{equation}
and we finally conclude the main asymptotic
\begin{equation}\label{eq:main_asymptotic_blowup}
    \lvert \Omega U\rvert^3=2\sqrt 2\left( t_+-t+ \frac{t_+ r^2}{1+t_+^2}\right)^{-3} +O((t_+-t+r^2)^{-2}).
\end{equation}
Since $\lVert u(t)\rVert_{L^3(\mathbb R^3)}^3=4\pi\int_0^\infty \lvert \Omega U\rvert^3r^2\, dr, $ we split the integral as follows. 
\begin{multline*}
    \int_0^\infty \lvert \Omega U\rvert^3r^2\, dr=2\sqrt{2}\int_0^{\varepsilon}\left( t_+-t+ \frac{t_+ r^2}{1+t_+^2}\right)^{-3}r^2dr\\
    +O\left(\int_0^{\varepsilon}\left( t_+-t+ \frac{t_+ r^2}{1+t_+^2}\right)^{-2}r^2dr\right)+\int_{\varepsilon}^\infty \lvert \Omega U\rvert^3r^2\, dr 
\end{multline*}
 Using $\Omega^3\lesssim r^{-6}$ and $\lvert U(s)\rvert^2\lesssim 1$, for $r\geq \varepsilon$, we deduce 
\begin{multline*}
    \int_0^\infty \lvert \Omega U\rvert^3r^2\, dr=2\sqrt{2}\int_0^{+\infty}\left( t_+-t+ \frac{t_+ r^2}{1+t_+^2}\right)^{-3}r^2dr+O(1)\\
    +O\left(\int_0^{\infty}\left( t_+-t+ \frac{t_+ r^2}{1+t_+^2}\right)^{-2}r^2dr\right). 
\end{multline*}
By the change of variable $r=\sqrt{t_+-t}\rho$, we have, for $p>3/2$
\begin{equation}
\label{CoV}
\int_0^{+\infty}\left( t_+-t+ \frac{t_+ r^2}{1+t_+^2}\right)^{-p}r^2dr=\left(t_+-t\right)^{\frac 32-p} \int_0^{\infty} \left(1+\frac{t_+\rho^2}{1+t_+^2}\right)^{-p}\rho^2d\rho.
\end{equation}
This proves the bound~\eqref{eq:blowup_Lthree_recall} with constant 
\begin{equation}\label{eq:constant_blowup}
    C_0=8\sqrt 2 \pi \int_0^\infty \left(1+\frac{t_+\rho^2}{1+t_+^2}\right)^{-3} \rho^2\, d\rho,
\end{equation}
as we wanted. To prove~\eqref{eq:blowup_sobolev}, which we recall:
\begin{equation}\tag{\ref{eq:blowup_sobolev}}\label{eq:blowup_sobolev_recall}
    \lVert u(t)\rVert_{\dot{H}^{1/2}} \approx \frac{1}{(t_+-t)^{1/2}};
\end{equation}
we start by pointing out the Sobolev embedding $\lVert u(t)\rVert_{\dot{H}^{1/2}}\gtrsim \lVert u(t)\rVert_{L^3(\mathbb R^3)}$, which by~\eqref{eq:blowup_Lthree_recall} gives one of the inequalities in~\eqref{eq:blowup_sobolev_recall}. To prove the opposite inequality, we consider the interpolation $\lVert u(t)\rVert_{\dot{H}^{1/2}}^2\le \lVert u(t)\rVert_{L^2}\lVert \nabla u(t)\rVert_{L^2}$. The asymptotics~\eqref{eq:integrand_asymptotic} and \eqref{eq:OmegaExpansionBlowup}, together with \eqref{CoV}, yield
\begin{equation}\label{eq:blowup_Ltwo}
    \begin{split}
        \lVert u(t)\rVert_{L^2(\mathbb R^3)}^2=4\pi\int_0^{\infty}\lvert \Omega U\rvert^2r^2\, dr= O((t_+-t)^{-\frac12}).
    \end{split}
\end{equation}
On the other hand, 
\begin{equation}\label{eq:blowup_Hone}
    \lVert \nabla u(t)\rVert_{L^2(\mathbb R^3)}^2\lesssim \int_0^\infty \left\lvert\frac{\partial \Omega}{\partial r} U\right\rvert^2r^2\, dr + \int_0^\infty \left\lvert\Omega \dot U\frac{\partial s}{\partial r}\right\rvert^2r^2\, dr;
\end{equation}
and the first of the latter two integrals is bounded by $O((t_+-t)^{-1/2})$ just like~\eqref{eq:blowup_Ltwo}. To estimate the second integral,  Lemma~\ref{lem:asympt_duffing} gives 
\begin{equation}\label{eq:recall_asympt_duffing}
    \dot{U}(s)=\sqrt{2}(T_+-s)^{-2}+O(1),
\end{equation}
and since $\left\lvert\frac{\partial s}{\partial r}\right\rvert=4\lvert t\rvert r/[(1+(t+r)^2)(1+(t-r)^2)]\le 4t_+ r$, 
\begin{equation}\label{eq:conclude_sobolev_blowup}
    \begin{split}
        \int_0^\infty \left\lvert\Omega \dot U(s)\frac{\partial s}{\partial r}\right\rvert^2r^2\, dr &\le 4t_+\int_0^\infty \left\lvert\Omega \dot U(s)\right\rvert^2r^4\, dr \\
        &\le O((t_+-t)^{-\frac32}),
    \end{split}
\end{equation}
by similar computations as above.
So by~\eqref{eq:blowup_Hone} $\lVert \nabla u (t)\rVert_{L^2}^2\le O((t_+-t)^{-3/2})$. We conclude 
\begin{equation}\label{eq:blowup_conclusion}
    \lVert u(t)\rVert_{\dot{H}^{1/2}}^2\le \lVert u(t)\rVert_{L^2}\lVert \nabla u(t)\rVert_{L^2} \le O((t_+-t)^{-1}), 
\end{equation}
which is the desired opposite inequality. This concludes the proof of~\eqref{eq:blowup_sobolev_recall}.

We turn now to~\eqref{eq:blowup_ut_Lthreetwo}, which we recall: 
\begin{equation}\tag{\ref{eq:blowup_ut_Lthreetwo}}\label{eq:blowup_ut_Lthreetwo_recall}
\lVert \partial_t u(t)\rVert_{L^\frac{3}{2}(\mathbb R^3)}= \frac{2^{-\frac12}C_0^\frac23}{t_+-t}+O(1).
\end{equation}
We have that 
\begin{equation}\label{eq:ut_splitting}
    \lVert \partial_t u(t)\rVert_{L^\frac{3}{2}(\mathbb R^3)}^{\frac32}= 4\pi\int_0^\infty \left\lvert\frac{\partial \Omega}{\partial t} U+\Omega \dot U\frac{\partial s}{\partial t}\right\rvert^\frac32r^2\, dr.
\end{equation}
The  previous method shows that $\int_0^\infty \left\lvert\frac{\partial \Omega}{\partial t} U(s)\right\rvert^\frac32 r^2\, dr= O(1)$. So this term will be negligible compared to the other one. Now, by ~\eqref{eq:recall_asympt_duffing}, 
\begin{equation}\label{eq:Udot_isU}
    \lvert U(s)\rvert^3=2^\frac34\lvert \dot U(s)\rvert^\frac32 + O((T_+-s)^{-1}), 
\end{equation}
and since $\frac{\partial s}{\partial t}=2(1+t^2-r^2)/((1+(t+r)^2)(1+(t-r)^2))$, we have 
\begin{equation}\label{eq:partial_s_is_Omega}
    \left\lvert \Omega \frac{\partial  s}{\partial t}\right\rvert^\frac32 = \frac{2^3}{(1+t_+^2)^3}+O(t_+-t + r)=\lvert\Omega\rvert^3+O(t_+-t+r), 
\end{equation}
so we conclude that our integrand essentially coincides with the previous one to main order:
\begin{equation}\label{eq:u_t_final_asymptotic}
    \left\lvert \Omega \frac{\partial s}{\partial t} \dot{U}\right\rvert^\frac32 =2^{-\frac34}\lvert \Omega U\rvert^3 +O((t_+-t+r^2)^{-2}).
\end{equation}
In light of~\eqref{eq:main_asymptotic_blowup},the exact same computations as in the proof of~\eqref{eq:blowup_Lthree_recall} now yield 
\begin{equation}\label{eq:final_blowup_ut}
    4\pi\int_0^\infty \left\lvert \Omega \frac{\partial s}{\partial t} \dot{U}\right\rvert^\frac32 r^2\, dr = \frac{2^{-\frac34}C_0}{(t_+-t)^\frac32} +O((t_+-t)^{-\frac{1}{2}}), 
\end{equation}
and~\eqref{eq:blowup_ut_Lthreetwo_recall} follows from~\eqref{eq:ut_splitting}.

Finally,~\eqref{eq:blowup_ut_sobolev} is an immediate consequence of the embedding $L^{3/2}\subset \dot{H}^{-1/2}$. 
\subsection{Proof of Proposition~\ref{prop:blow_up_attractor}}\label{sec:attractor} The ODE asymptotics of Lemma~\ref{lem:asympt_duffing} immediately imply
\begin{equation}\label{eq:asymptotics_with_sine}
    U(s)=\frac{\sqrt 2}{\sin(T_+-s)}+O(T_+-s).
\end{equation}
Using the formulas~\eqref{eq:inverse_Penrose}, this yields for $u(t, r)=\Omega(t, r)U(s)$ the asymptotic 
\begin{equation}\label{eq:asymptotics_with_sine_physical}
    u(t, r)=\frac{2\sqrt{2}}{a(1+r^2-t^2)-2bt} + \Omega(t, r)\cdot O(T_+-s),
\end{equation}
with $a=\sin T_+, b=\cos T_+$. The estimate $\Omega(t, r)\cdot O(T_+-s)=O(t_+-t)$ follows at once from the boundedness of $\Omega$ and from the Taylor expansion~\eqref{eq:TaylorExpandBlowup}. The proof is complete.


\printbibliography
 \end{document}